\def\beq{\begin{equation}}
\def\eeq{\end{equation}}
\def\beqs{\begin{equation*}}
\def\eeqs{\end{equation*}}
\newcommand{\ddt}{\frac d{dt}}
\newcommand{\tnum}{\rm(\roman*)}
\newcommand{\rnum}{\rm(\alph*)}
\newtheorem{theorem}{Theorem}[section]
\newtheorem{lemma}[theorem]{Lemma}
\newtheorem{corollary}[theorem]{Corollary}
\newtheorem{definition}[theorem]{Definition}
\newtheorem{assumption}[theorem]{Assumption}
\theoremstyle{definition}
\newtheorem{remark}[theorem]{Remark}
\newtheorem{example}[theorem]{Example}
\newtheorem{scenario}[theorem]{Scenario}
\newtheorem*{xnotation}{Notation}
\definecolor{darkred}{rgb}{.70,.12,.20}
\definecolor{darkgreen}{rgb}{.20,.52,.14}
\newcommand{\varep}{\varepsilon}
\newcommand{\LL}{\mathcal L}
\newcommand{\iln}{L}
\newcommand{\R}{\mathbb{R}}
\newcommand{\N}{\mathbb{N}}
\newcommand{\Z}{\mathbb{Z}}
\newcommand{\classP}{\mathcal P}
\newcommand{\bigo}{\mathcal{O}}
\numberwithin{equation}{section}
\title{\textbf{Asymptotic expansions with exponential, power, and logarithmic functions for non-autonomous nonlinear differential equations
}}
\author{Dat Cao$^1$ and Luan Hoang$^2$}
\begin{document}
\date{\vspace{-4ex}}

\maketitle

\begin{center}
\textit{$^1$Department of Mathematics and Statistics\\
Minnesota State University, Mankato\\
Mankato, MN 56001,  U. S. A.}\\
Email address:  \texttt{dat.cao@mnsu.edu}\\

\medskip
\textit{$^2$Department of Mathematics and Statistics\\
Texas Tech University\\
1108 Memorial Circle, Lubbock, TX 79409--1042, U. S. A.}\\
Email address:  \texttt{luan.hoang@ttu.edu}\\
\bigskip
{\large \today}
\end{center}



\begin{abstract}
This paper develops further and systematically the asymptotic expansion theory that was initiated by Foias and Saut in \cite{FS87}.
We study the long-time dynamics of a large class of dissipative systems of nonlinear ordinary differential equations with time-decaying  forcing functions. The nonlinear term can be, but not restricted to, any smooth vector field which, together with its first derivative, vanishes at the origin. The forcing function can be approximated, as time tends to infinity, by a series of functions which are coherent combinations of exponential, power and iterated logarithmic functions. We prove that any decaying solution admits an asymptotic expansion, as time tends to infinity,  corresponding to the asymptotic structure of the forcing function. Moreover, these expansions can be generated by more than two base functions and go beyond the polynomial formulation imposed in previous work. 
\end{abstract}

\pagestyle{myheadings}\markboth{D. Cao and L. Hoang}
{Asymptotic expansions for nonlinear differential equations}

\tableofcontents

 \section{Introduction}

This work is motivated by a deep result by Foias and Saut \cite{FS87}, which is on the long-time behavior of solutions of the Navier--Stokes equations,  and its later developments in \cite{FS91,HM1,HM2,CaH1,CaH2,Minea,Shi2000,HTi1}. In the original work \cite{FS87}, Foias and Saut studied the Navier--Stokes equations written in the functional form (on an appropriate infinite dimensional space) as 
\beq\label{NSE}
u_t+Au+B(u,u)=0,
\eeq
where $A$ is a linear operator, and $B$ is a bi-linear form.
They established the following asymptotic expansion, as $t\to\infty$,
\beq\label{nsexp}
u(t)\sim \sum_{k=1}^\infty q_k(t)e^{-\mu_k t},
\eeq
where $q_k(t)$'s are polynomials in $t$, and $\mu_k$ increases to infinity. Roughly speaking, expansion \eqref{nsexp} means that, for each $N$, the solution $u(t)$ can be approximated by the finite sum 
\beqs
s_N(t):=\sum_{k=1}^N q_k(t)e^{-\mu_k t},
\eeqs 
in the sense that the remainder $u(t)-s_N(t)$ decays exponentially faster than the fastest decaying mode $e^{-\mu_N t}$ in $s_N(t)$, see Definition \ref{psi-phi} below for the precise meaning.

Expansion \eqref{nsexp} is studied in more details in \cite{FS91,FHN1,FHN2,FHOZ1,FHOZ2,FHS1,HM1} regarding its convergence, approximation in Gevrey spaces, associated invariant nonlinear manifolds and normal form, and connection to the theory of Poicar\'e--Dulac normal form, applications to statistical solutions and turbulence theory, etc. A similar expansion to \eqref{nsexp} is also established in \cite{HTi1} for the Navier--Stokes equations of rotating fluids. Besides the Navier--Stokes equations, expansion \eqref{nsexp} were obtained and studied for other ordinary differential equations (ODEs) \cite{Minea}, and dissipative wave equations \cite{Shi2000}. The last two mentioned papers deal with equations with more general nonlinearity than the quadratic term  $B(u,u)$ in \eqref{NSE}. However, they are still autonomous systems. 

Regarding non-autonomous systems, recent papers \cite{HM2,CaH1,CaH2} extend the Foias--Saut result to the Navier--Stokes equations  with time-dependent forces, that is, 
\beq\label{Nf}
u_t+Au+B(u,u)=f(t),
\eeq
where the force $f(t)$ decays to zero as $t\to\infty$.
In \cite{HM2}, asymptotic expansion \eqref{nsexp} for a solution $u(t)$ of \eqref{Nf} is obtained under the condition that
\beq\label{hmf}
f(t)\sim \sum_{k=1}^\infty p_k(t)e^{-\mu_k t},
\eeq
where $p_k$'s are appropriate polynomials.
The papers \cite{CaH1,CaH2} consider the forces that decay not as fast as exponential functions. It is obtained, among other things, that if 
\beq\label{cahf}
f(t)\sim \sum_{k=1}^\infty \eta_k t^{-\mu_k}, \text{ respectively, } 
f(t)\sim \sum_{k=1}^\infty \eta_k (\ln t)^{-\mu_k},
\eeq 
where $\eta_k$'s are constant vectors (in functional spaces), then there exist constant vectors $\xi_k$'s  such that 
\beq\label{cahu}
u(t)\sim \sum_{k=1}^\infty \xi_k t^{-\mu_k}, \text{ respectively, } 
u(t)\sim \sum_{k=1}^\infty \xi_k (\ln t)^{-\mu_k}.
\eeq 

However, the fact that $\eta_k$ and $\xi_k$ are independent of $t$ makes the expansions in \eqref{cahf} and \eqref{cahu} less than full counterparts of the original \eqref{nsexp}.

The current paper aims to combine two approaches: one in \cite{Minea,Shi2000} for general equations, and one in \cite{HM2,CaH1,CaH2} for general forcing functions. To make the ideas clear, we avoid, in this paper, complicated issues about global existence, uniqueness, and regularity that often arise in nonlinear partial differential equations (PDEs). Thus, we choose to work with systems of ODEs (in finite dimensional spaces) with general nonlinearity, and explore various types of forcing functions.
We describe the systems of differential equations of our interest and explain the main ideas now.

\begin{xnotation}
The following notation will be used throughout the paper. 

\begin{itemize}
 \item $\N=\{1,2,3,\ldots\}$ denotes the set of natural numbers, and $\Z_+=\N\cup\{0\}$.

 \item For a vector $y\in\R^n$, its Euclidean norm is denoted by $|y|$.
 
 \item Let $f$ be a non-negative function defined in a neighborhood of the origin in $\R^n$. For a number $\alpha>0$, we write $f(y)=\bigo(|y|^\alpha)$ as $y\to 0$, if there are positive numbers  $r$ and $C$ such that $f(y)\le C|y|^\alpha$ for all $x\in\R^n$ with $|x|<r$.   
 
 \item Let $f,h:[T_0,\infty)\to[0,\infty)$ for some $T_0\in \R$. We write $f(t)=\mathcal O(h(t))$ (implicitly means as $t\to\infty$) if there exist numbers $T\ge T_0$ and $C>0$ such that $f(t)\le Ch(t)$ for all $t\ge T$.

 \item Let $T_0\in\R$, functions $f,g:[T_0,\infty)\to\R^n$, and $h:[T_0,\infty)\to[0,\infty)$. 
We will conveniently write  $f(t)=g(t)+\bigo(h(t))$ to indicate
  $|f(t)-g(t)|=\bigo(h(t))$.  
  
  In particular, when $g=0$, the expression $f(t)=\bigo(h(t))$ means $|f(t)|=\bigo(h(t))$. 

\item For long-time estimates, we will algebraically manipulate the above big-O notation in our calculations. For example, suppose $u(t)$ and $v(t)$ are $\R^n$-valued functions with $u(t)=\bigo(e^{-t})$ and $v(t)=\bigo(e^{-2t})$. We can manipulate (and read from left to right)
$$u(t)+v(t)=\bigo(e^{-t})+\bigo(e^{-2t})=\bigo(e^{-t}),\quad |u(t)|v(t)=\bigo(e^{-t})\bigo(e^{-2t})=\bigo(e^{-3t}).$$
 \end{itemize}
\end{xnotation}

Let $n\in \N$ be fixed throughout the paper.
Consider the following system of nonlinear ODEs in $\R^n$:
\beq \label{sys-eq}
y'=-Ay +G(y)+f(t),
\eeq 
where $A$ is an $n\times n$ constant (real) matrix, $G$ is a vector field on $\R^n$, and  $f$ is a function from $(0,\infty)$ to $\R^n$.
 
\begin{assumption}\label{assumpA}
Matrix $A$ is a diagonalizable matrix with positive eigenvalues.
 \end{assumption}
 
This assumption is common in studying the dissipative dynamical systems, although it is not as general as \cite{Shi2000}. It helps us simplify the calculations and displays the key features of the dissipative dynamics.
 
\begin{assumption}\label{assumpG}  Function $G:\R^n\to\R^n$ has the the following properties. 
\begin{enumerate}[label=\tnum]
 \item  $G$ is locally Lipschitz.
 \item There exist functions $G_m:\R^n\to\R^n$,  for $m\ge 2$, each is a homogeneous polynomial of degree $m$, such that, for any $N\ge 2$, there exists $\delta >0$ such that
 \beq\label{Grem1}
 \Big|G(y)-\sum_{m=2}^N G_m(y)\Big|=\bigo(|y|^{N+\delta})\text{ as } y\to 0.
 \eeq
 \end{enumerate}
\end{assumption}

We formally write Assumption \ref{assumpG}(ii) as an expansion
\beq \label{Gex}
G(y)\sim \sum_{m=2}^\infty G_m(y)\text{ as }y\to 0.
\eeq 

It is clear that if $G$ is a $C^\infty$-function with $G(0)=0$ and $G'(0)=0$ then $G$ satisfies Assumption \ref{assumpG}.  
Note that we do not require the convergence of the formal series on the right-hand side of \eqref{Gex}.
Even when the convergence occurs, the limit is not necessarily the function $G(y)$. For instance, if $h:\R^n\to\R^n$ satisfies $|y|^{-\alpha} h(y)\to 0$ as $y\to 0$ for all $\alpha>0$, then $G$ and $G+h$ have the same expansion \eqref{Gex}.

Next, we investigate the class of functions for $f(t)$ and the forms of expansions that can be obtained.
Since this paper involves different vector-valued polynomials of several variables, we clarify their definition here.

\begin{definition}\label{polydef}
 Let $X$ be a real linear space.  For $m\in\N$, a function $p:\R^m\to X$ is a polynomial if
 \beqs
 p(z)=\sum a_\alpha z^\alpha \text{ for }z\in\R^m,
 \eeqs
 where the sum is taken over finitely many multi-index $\alpha\in \Z_+^m$, and $a_\alpha$'s are vectors in $X$.

 In particular, when $m=1$, a function $p:\R\to X$ is a polynomial if
 \beqs
 p(t)=\sum_{k=0}^N a_k t^k \text{ for }t\in\R,
 \eeqs
 where $N\ge 0$, and $a_k$'s are vectors in $X$.
 \end{definition}

 Examining the expansions in \eqref{nsexp}, \eqref{hmf}, \eqref{cahf} and \eqref{cahu}, we aim to establish some results for the forcing function of the general form 
  \beq\label{fform}
f(t) \sim \sum_{k=1}^\infty p_k(\phi(t)) \psi(t)^{-\gamma_k},
 \eeq 
 where $p_k$'s are $\R^n$-valued polynomials of one variable, and $\gamma_k$'s are positive numbers.

Clearly, when $\psi(t)=e^t$ and $\phi(t)=t$,  \eqref{fform} gives \eqref{hmf}.
When $\psi(t)=t$, resp., $\psi(t)=\ln t$,  \eqref{fform} resembles \eqref{cahf}. 
Because of the presence of the time-dependent function $\phi(t)$ in \eqref{fform}, it will remove the restriction of having only constant vectors $\eta_k$'s in \eqref{cahf}.
However, it is not clear what $\phi(t)$ might be. 
Nonetheless, we provide the rigorous definition for \eqref{fform} here and will specify some natural choices for $\phi(t)$ later.

\begin{definition}\label{psi-phi}
Let $(\psi,\phi)$ be a pair of real-valued functions defined on $(T,\infty)$ for some $T\in \R$ such that 
\beq\label{phs1}
\lim_{t\to\infty}\psi(t) =\infty =\lim_{t\to\infty}\phi(t) ,\eeq  
and
 \beq\label{compcond}
 \lim_{t\to\infty} \frac{\phi^\lambda(t)}{\psi(t)}=0\text{ for all }\lambda>0.
 \eeq
 
 Let $(X,\|\cdot\|_X)$ be a normed space, and $g$ be a function from $(T',\infty)$ to $X$ for some $T'\in\R$.

\begin{enumerate}[label=\tnum] 
 \item Let $(\gamma_k)_{k=1}^\infty$ be a divergent, strictly increasing sequence of positive numbers.
We say
  \beq \label{expan}
  g(t) \sim \sum_{k=1}^\infty p_k(\phi(t)) \psi(t)^{-\gamma_k},
  \eeq
  where each $p_k:\R \to X$ is a polynomial,   if for any $N\ge 1$, there exists $\mu>\gamma_N$ such that
  \beq\label{gdrem}
  \Big\|g(t)-\sum_{k=1}^N p_k(\phi(t)) \psi(t)^{-\gamma_k}\Big\|_X=\bigo(\psi(t)^{-\mu}).  
  \eeq
 
 \item Let $N\in\N$, and $(\gamma_k)_{k=1}^N$ be positive and strictly increasing.
We say
  \beq\label{finex}
g(t) \sim \sum_{k=1}^N p_k(\phi(t)) \psi(t)^{-\gamma_k},
\eeq
where each $p_k:\R \to X$ is a polynomial, for $1\le k\le N$, if
\beq\label{allam}
\Big \|g(t) - \sum_{k=1}^N p_k(\phi(t)) \psi(t)^{-\gamma_k}\Big \|_X=\bigo(\psi(t)^{-\lambda})\quad \text{ for any }\lambda>0.
\eeq
\end{enumerate}

We call $\psi$ and $\phi$ the \emph{base functions} for expansions \eqref{expan} and \eqref{finex}, with $\psi$ being \emph{primary}, and $\phi$ being \emph{secondary}. 
\end{definition}
  
In case $X$ is a finite dimensional normed space, all norms on $X$ are equivalent. Hence, the above definitions of \eqref{expan} and \eqref{finex} are independent of the particular norm $\|\cdot\|_X$. 

One can see, again, that expansions \eqref{nsexp} and \eqref{hmf} correspond to \eqref{expan} with $(\psi,\phi)=(e^t,t)$.
The next two examples are $(\psi,\phi)=(t,\ln t)$ and $(\psi,\phi)=(\ln t,\ln \ln t)$, which cover the expansions in \eqref{cahf} and \eqref{cahu}. 
It turns out that, corresponding to these three cases of $(\psi,\phi)$ and asymptotic expansion \eqref{fform} for $f(t)$,  we can prove that 
 any decaying solution $y(t)$ of \eqref{sys-eq} admits a similar asymptotic expansion 
 \beq\label{yform}
 y(t) \sim \sum_{k=1}^\infty q_k(\phi(t)) \psi(t)^{-\mu_k},
 \eeq 
 where $\mu_k$'s are positive numbers appropriately generated based on the powers $\gamma_k$'s in \eqref{fform}.

 This is the starting point of the current paper that explains the main ideas. More sophisticated expansions and all technicalities will be presented in details below.
 
The paper is organized as follows.
Section \ref{ode-lemmas} sets up the background for equation \eqref{sys-eq} and develops essential tools for the paper. It contains the approximation lemmas for solutions of linear ODEs, see Lemmas \ref{exp-odelem}, \ref{iterlog-odelem} and Corollary \ref{powlog-odelem}. They are simple but important building blocks for the complicated nonlinear theory developed in later sections.
Especially,  Lemma  \ref{iterlog-odelem} will enable us to deal with a much larger class of forcing functions.
In  Section \ref{odesec}, we prove the basic existence result in Theorem \ref{thmsmall} for solutions of the studied ODEs.
A specific asymptotic estimate, as time tends to infinity, which corresponds to the decay of the forcing function, is established in Theorem \ref{thmdecay}. These will be used to obtain the first term in the asymptotic expansion of the solutions. 
Section \ref{basetwo} contains our first main result, Theorem \ref{mainthm}, corresponding to the expansions  in Definition \ref{psi-phi}, with specific types of $(\psi,\phi)$ indicated in Definition \ref{typedef}. It fully justifies \eqref{yform} and removes the limitation of the previous work \cite{CaH1,CaH2} mentioned in the remark after \eqref{cahu}.
Moreover, we emphasize that the calculations in the proof of Theorem \ref{mainthm} will crucially serve the further developments in the next section.
In Section \ref{multisec}, we investigate expansions that are generated by more than two base functions, see Definition \ref{mixdef}. 
They can consist of many secondary base functions and allow the functions $p_k$'s in \eqref{expan} to be more than just polynomials, i.e., the powers can be real numbers, not just non-negative integers. Therefore, compared to those in Section \ref{basetwo} with the same primary base function, these expansions are more precise approximations of the forcing function and solutions.
 Despite not yet covering the case of exponential  primary base function, they are rather significant deviations from the polynomial-based formulation for $q_k$'s and $p_k$'s in the asymptotic expansions \eqref{nsexp} and \eqref{expan}, respectively. 
The case of purely iterated logarithmic functions is treated in subsection \ref{logsec}, see Theorem \ref{logthm}.
The case of mixed power and iterated logarithmic functions is treated in subsection \ref{doublexp}, see Theorem \ref{mixpl}.
Typical cases of expansions with the triple power-log-loglog functions, see \eqref{uqln} and \eqref{uMul}, are explored in Corollaries \ref{cor54} and \ref{cor55}. In Appendix \ref{append}, we briefly discuss a specific aspect of our results, namely, their application to the Galerkin approximations to nonlinear PDEs.
  
 We comment that the approach presented in the current paper is based on the result and ideas of Foias and Saut in \cite{FS87}. 
 For our specific problem of obtaining  large-time  asymptotic expansions for decaying solutions, it is direct, simple and does not resort to the normal form theory for ODEs, see, e.g., \cite{ArnoldODEGeo,Bruno1,MurdockBook}. 

Finally, it is worth mentioning that our results can be extended to the Navier--Stokes equations or other PDEs with appropriate settings, by combining this paper's techniques with the methods in \cite{CaH1,CaH2} or \cite{Shi2000}.
 
\section{Preliminaries} \label{ode-lemmas}

If $m\in \N$ and $\mathcal L$ is an $m$-linear mapping from $\R^{m\times n}$ to $\R^n$, the norm of $\mathcal L$ is defined by
\beqs
\|\mathcal L\|=\max\{ |\mathcal L(y_1,y_2,\ldots,y_m)|:y_j\in\R^n,|y_j|=1,\text{ for } 1\le j\le m\}.
\eeqs
Then $\|\mathcal L\|$ is a number in $[0,\infty)$, and, for any $y_1,y_2,\ldots,y_m\in\R^n$, one has 
\beq\label{multiL}
|\mathcal L(y_1,y_2,\ldots,y_m)|\le \|\mathcal L\|\cdot |y_1|\cdot |y_2|\cdots |y_m|.
\eeq

First, we examine equation \eqref{sys-eq} further.
Regarding its linear part, thanks to Assumption \ref{assumpA}, we can denote by $\Lambda_k$, for $1\le k\le n$, the eigenvalues of $A$ which are positive and increasing in $k$.
Then there exists an invertible matrix $S$ such that
\beq \label{Adiag}
A=S^{-1} A_0 S,
\text{ where } A_0={\rm diag}[\Lambda_1,\Lambda_2,\ldots,\Lambda_n].
\eeq 

Now, denote the distinct eigenvalues of $A$ by $\lambda_j$'s which are strictly increasing in $j$, i.e., 
 \beqs
 0<\lambda_1=\Lambda_1<\lambda_2<\ldots<\lambda_d=\Lambda_n,\quad \text{ with } 1\le d\le n.
 \eeqs

For $1\le k,\ell\le n$, let $E_{k\ell}$ be the elementary $n\times n$ matrix $(\delta_{ki}\delta_{\ell j})_{1\le i,j\le n}$, where $\delta_{ki}$ and $\delta_{\ell j}$ are the Kronecker delta symbols.

For an eigenvalue $\lambda$ of $A$, define
 \beqs
\hat R_\lambda=\sum_{1\le i\le n,\Lambda_i=\lambda}E_{ii}\text{ and } R_\lambda=S^{-1}\hat R_\lambda S.
 \eeqs
 
The following are immediate facts.
\begin{enumerate}[label=\rnum]
\item If $\lambda$ is an eigenvalue of $A$, then 
\beq\label{Pa} R_\lambda^2=R_\lambda,
\eeq 
\beq \label{Pd}   AR_\lambda=R_\lambda A=\lambda R_\lambda.
\eeq 

\item If $\lambda$ and $\mu$ are distinct eigenvalues of $A$, then
\beq \label{Pb}  R_\lambda R_\mu=0.
\eeq 

\item One has
\beq \label{Pc} I_n = \sum_{j=1}^d R_{\lambda_j},
\eeq 
and, for any $y\in\R^n$, 
\beq\label{Pe} 
|y|\le \sum_{j=1}^d |R_{\lambda_j}y|\le (\sum_{j=1}^d \|R_{\lambda_j}\| ) |y|.
\eeq
\end{enumerate}

For the nonlinear part of equation \eqref{sys-eq}, we consider condition \eqref{Gex}. For each $m\ge 2$,  there exists an $m$-linear mapping $\mathcal G_m$ from $\R^{m\times n}$ to $\R^n$ such that
\beq\label{GG} G_m(y)=\mathcal G_m(y,y,\ldots,y)\text{ for } y\in\R^n.\eeq

By \eqref{multiL}, one has, for any $y_1,y_2,\ldots,y_m\in\R^n$, that
\beq\label{multineq}
|\mathcal G_m(y_1,y_2,\ldots,y_m)|\le \|\mathcal G_m\|\cdot |y_1|\cdot |y_2|\cdots |y_m|.
\eeq

In particular,
\beq\label{multy}
|G_m(y)|\le \|\mathcal G_m\|\cdot |y|^m\quad\forall y\in\R^n.
\eeq

It follows \eqref{Grem1}, when $N=2$, and \eqref{multy}, for $m=2$, that $|G(y)|=\bigo(|y|^2)$ as $y\to 0$. Thus, there exists numbers $c_*,r_*>0$ such that 
\beq \label{Gyy}
|G(y)|\le c_*|y|^2 \quad\forall y\in\R^n \text{ with } |y|<r_*.
\eeq

Next, we obtain elementary results on long-time asymptotic estimates for integrals and approximations for solutions of linear ODEs.
They will play important roles in later developments of the paper.

We will often use the following simple fact
\beq\label{ee}
\int_0^t e^{-\alpha(t-\tau)}e^{-\beta\tau}d\tau 
=\begin{cases}
 \bigo(e^{-\min\{\alpha,\beta\}t}),    &\text{ if } \alpha\ne \beta,\\
  \bigo(t e^{-\alpha t}),  &\text{ if } \alpha=\beta.
  \end{cases}
\eeq

We recall here and make concise  \cite[Lemma 4.2]{HM2}, which originates from the work of Foias and Saut \cite{FS87}.

\begin{lemma}\label{polylem}
Let $(X,\|\cdot\|_X)$ be a Banach space.
Let $p:\R\to X$ be a polynomial, and $g\in C([0,\infty),X)$ satisfy 
 \beqs
 \|g(t)\|_X\le Me^{-\delta t} \quad \forall t\ge 0, \quad \text{for some } M,\delta>0.
 \eeqs

Suppose  that $y\in C([0,\infty),X)$ solves the equation
 \beqs
 y'(t)+ \beta y(t) =p(t)+g(t), \quad  \text{ for }t > 0,
 \eeqs
 where $\beta$ is a constant in $\R$.

 In case $\beta<0$, assume further that
 \beq\label{yexpdec}
  \lim_{t\to\infty} (e^{\beta t}\|y(t)\|_X)=0.
  \eeq
  
 Then there exists a unique $X$-valued polynomial $q(t)$ such that 
\beq\label{pode2}
q'(t)+\beta q(t) = p(t),\quad \text{for }t\in \R,
\eeq
and 
 \beq\label{gball}
  \|y(t)-q(t)\|_X=\bigo(e^{-\varep t}) \quad \text{ for any }\varep\in(0,\varep_{\delta,\beta}),
  \eeq
where
\beq\label{epp}
\varep_{\delta,\beta}=\begin{cases}
                       \min\{\delta,\beta\}&\text{ if } \beta>0,\\
                       \delta&\text{ otherwise.}
                      \end{cases}
\eeq
\end{lemma}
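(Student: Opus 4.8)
The plan is to construct the polynomial $q$ explicitly according to whether $\beta$ vanishes, and then to analyze the difference $w=y-q$, which satisfies the linear equation $w'+\beta w=g$ with an exponentially small right-hand side $\|g(t)\|_X\le Me^{-\delta t}$. For the construction, observe that when $\beta\neq 0$ the linear map $q\mapsto q'+\beta q$ is a bijection on the space of $X$-valued polynomials of degree $\le \deg p$. Writing $p(t)=\sum_{j=0}^N a_jt^j$ and seeking $q(t)=\sum_{j=0}^N b_jt^j$, I would match coefficients from the top degree downward: $b_N=a_N/\beta$, and then $b_j=(a_j-(j+1)b_{j+1})/\beta$ for $j=N-1,\dots,0$. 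This yields existence and uniqueness of a polynomial $q$ with $q'+\beta q=p$. When $\beta=0$ the equation is $q'=p$, whose polynomial solutions are the antiderivatives of $p$, determined only up to an additive constant in $X$; I defer fixing that constant to the decay analysis below.

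For $\beta>0$ I would not need the hypothesis on $y$. With $w=y-q$ solving $w'+\beta w=g$, Duhamel's formula gives $w(t)=e^{-\beta t}w(0)+\int_0^t e^{-\beta(t-s)}g(s)\,ds$, and I would bound the convolution by $M\int_0^t e^{-\beta(t-s)}e^{-\delta s}\,ds$, which is exactly the integral estimated in \eqref{ee}. Together with the homogeneous term $\bigo(e^{-\beta t})$, this produces $\|w(t)\|_X=\bigo(e^{-\varep t})$ for every $\varep<\min\{\beta,\delta\}=\varep_{\delta,\beta}$, matching \eqref{epp}. For $\beta=0$ one simply integrates $w'=g$; since $\|g\|_X$ is integrable on $[0,\infty)$, $w(t)$ converges to some limit $w_\infty\in X$ with $\|w(t)-w_\infty\|_X\le M\int_t^\infty e^{-\delta s}\,ds=\bigo(e^{-\delta t})$. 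Absorbing $w_\infty$ into the free constant of $q$ makes $y-q$ decay like $\bigo(e^{-\delta t})=\bigo(e^{-\varep t})$ for $\varep<\delta=\varep_{\delta,0}$, and the requirement $y-q\to 0$ pins the constant uniquely, delivering the uniqueness of $q$ in this case.

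The delicate case, and the main obstacle, is $\beta<0$, where the homogeneous solutions $e^{-\beta t}$ grow and the forward Duhamel representation fails to decay. Here the extra hypothesis \eqref{yexpdec} is essential. Using the integrating factor $e^{\beta t}$ one gets $e^{\beta t}w(t)=w(0)+\int_0^t e^{\beta s}g(s)\,ds$; because $\beta-\delta<0$, the integral converges as $t\to\infty$, so $e^{\beta t}w(t)$ has a finite limit. Now $e^{\beta t}q(t)\to 0$ (a polynomial times a decaying exponential), and \eqref{yexpdec} gives $e^{\beta t}y(t)\to 0$, hence $e^{\beta t}w(t)\to 0$; this forces the limit to be $0$ and thereby determines $w(0)=-\int_0^\infty e^{\beta s}g(s)\,ds$. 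Rewriting the representation as the convergent tail integral $w(t)=-e^{-\beta t}\int_t^\infty e^{\beta s}g(s)\,ds$ and estimating yields $\|w(t)\|_X\le \frac{M}{\delta-\beta}\,e^{-\delta t}=\bigo(e^{-\varep t})$ for $\varep<\delta=\varep_{\delta,\beta}$. The non-routine point is recognizing that \eqref{yexpdec} selects the unique non-growing solution of $w'+\beta w=g$, and converting the representation into the backward integral to extract the sharp $\bigo(e^{-\delta t})$ decay; the uniqueness of $q$ itself follows already from the coefficient recursion since $\beta\neq 0$.
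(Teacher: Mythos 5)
Your proof is correct, and it is self-contained where the paper's is essentially a citation: the paper recalls \cite[Lemma 4.2]{HM2}, taking $q$ from the explicit integral formula \eqref{qdef} together with the ready-made estimates \eqref{g1b1}--\eqref{g1b3}, and it obtains uniqueness by invoking the general expansion-uniqueness Lemma \ref{ue} (with $N=1$, $\gamma_1=0$, $(\psi,\phi)=(e^t,t)$). You instead build $q$ by the top-down coefficient recursion for $\beta\neq 0$, which yields existence and uniqueness of the polynomial solution of \eqref{pode2} at once, with no appeal to Lemma \ref{ue}; for $\beta=0$ you pin the free constant of the antiderivative by the decay requirement, which is exactly what the formula $q(t)=y(0)+\int_0^\infty g(\tau)\,d\tau+\int_0^t p(\tau)\,d\tau$ in \eqref{qdef} encodes. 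The analytic core coincides in substance: forward Duhamel plus \eqref{ee} for $\beta>0$ (your treatment of the resonant case $\beta=\delta$, where the convolution produces $te^{-\beta t}$, correctly explains why \eqref{gball} holds on the open interval $(0,\varepsilon_{\delta,\beta})$ rather than at the endpoint); tail integration for $\beta=0$; and, for $\beta<0$, the observation that \eqref{yexpdec} forces $e^{\beta t}\bigl(y(t)-q(t)\bigr)\to 0$, which selects the backward representation $w(t)=-e^{-\beta t}\int_t^\infty e^{\beta s}g(s)\,ds$ and gives the bound $\frac{M}{\delta-\beta}e^{-\delta t}$, precisely \eqref{g1b3}. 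A small bonus of your construction: for $\beta>0$ the formula \eqref{qdef} as written actually yields $r(t)-r(0)e^{-\beta t}$, where $r$ is the polynomial solution of \eqref{pode2} --- a polynomial only up to an exponentially decaying term (harmless for \eqref{gball}, since it is absorbed into the remainder), whereas your recursion produces the polynomial directly. In sum, your route buys independence from the external reference and a transparent uniqueness argument; the paper's route buys brevity by reusing estimates already established in \cite{HM2}.
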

\begin{proof}
The uniqueness of $q(t)$ is due to Lemma \ref{ue} below with $N=1$, $\gamma_1=0$, $(\psi,\phi)=(e^t,t)$ and estimate \eqref{gball}.
In fact, following \cite[Lemma 4.2]{HM2}, the polynomial $q(t)$ is explicitly defined by 
\beq\label{qdef}        
 q(t)=
\begin{cases}
e^{-\beta t}\int_{0}^t e^{\beta\tau }p(\tau) d\tau&\text{if }\beta >0,\\
y(0) +\int_0^\infty  g(\tau)d\tau + \int_0^t p(\tau)d\tau &\text{if }\beta =0,\\
-e^{-\beta t}\int_t^\infty e^{\beta\tau }p(\tau) d\tau&\text{if }\beta <0, 
\end{cases}
\eeq
and satisfies the following estimates.

If $\beta>0$ then
  \beq\label{g1b1}
  \|y(t)-q(t)\|_X\le \Big(\|y(0)-q(0)\|_X + \frac{M}{|\beta-\delta|}\Big)e^{-\min\{\delta,\beta\} t}, \quad t\ge 0,\text{ for } \beta\ne \delta,
  \eeq
  and
  \beq\label{g1b1equal}
  \|y(t)-q(t)\|_X\le (\|y(0)-q(0)\|_X + M t )e^{-\delta t}, \quad t\ge 0,\text{ for } \beta=\delta.
  \eeq

If either ($\beta=0$)  or ($\beta<0$ with \eqref{yexpdec}),  then
  \beq\label{g1b3}
  \|y(t)-q(t)\|_X\le \frac{M}{|\beta|+\delta}e^{-\delta t}, \quad t\ge 0.
  \eeq

Combining \eqref{g1b1}, \eqref{g1b1equal} and \eqref{g1b3}, we deduce the concise estimate \eqref{gball}. 
\end{proof}

Using the basic Lemma \ref{polylem}, we derive, below, an efficient approximation lemma for linear ODEs. It will be utilized in the proof of the main result of Section \ref{basetwo}, Theorem \ref{mainthm}.

\begin{lemma}\label{exp-odelem}
Let $p(t)$ be an $\R^n$-valued polynomial and  $g:[T,\infty)\to \R^n$, for some  $T\in\R$, be a continuous function satisfying $|g(t)|=\bigo(e^{-\alpha t})$ for some $\alpha>0$.
Suppose $\lambda > 0$ and $y\in C([T,\infty),\R^n)$ is a solution of 
 \beq\label{odey}
 y'(t)=-(A-\lambda I_n)y(t)+p(t)+g(t),\quad \text{for }t\in(T,\infty).
 \eeq

If $\lambda>\lambda_1$, assume further that
\beq\label{ellams}
\lim_{t\to\infty} (e^{(\lambda_* -\lambda )t}|y(t)|)=0,\text{ where } \lambda_*=\max\{\lambda_j: 1\le j\le d,\lambda_j<\lambda\}.
\eeq 
 
Then there exists a unique $\R^n$-valued polynomial $q (t)$ such that 
\beq\label{qeq} q'(t)=-(A-\lambda I_n) q (t) +  p (t),\quad  \text{for }t\in\R,
\eeq  
and
 \beq\label{yqest}
 |y(t)-q(t)|=\bigo(e^{-\varep  t}) \quad \text{ for all }\varepsilon \in(0,\delta),
 \eeq
 where
$ \delta=\min\{\varep_{\alpha,\lambda_j-\lambda}:1\le j\le d\}>0$, with $\varep_{\alpha,\lambda_j-\lambda}$ being defined as in \eqref{epp}.
 \end{lemma}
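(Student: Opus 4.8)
The plan is to diagonalize the system by means of the spectral projections $R_{\lambda_j}$ and reduce the problem to $d$ decoupled scalar-type equations, each handled by Lemma~\ref{polylem}. I would set $y_j(t)=R_{\lambda_j}y(t)$, $p_j(t)=R_{\lambda_j}p(t)$, and $g_j(t)=R_{\lambda_j}g(t)$ for $1\le j\le d$. Applying $R_{\lambda_j}$ to equation \eqref{odey} and using \eqref{Pd} in the form $R_{\lambda_j}(A-\lambda I_n)=(\lambda_j-\lambda)R_{\lambda_j}$, each block decouples into
\beqs
y_j'(t)+(\lambda_j-\lambda)y_j(t)=p_j(t)+g_j(t),\quad t\in(T,\infty).
\eeqs
This is precisely the setting of Lemma~\ref{polylem} in the finite-dimensional Banach space $X_j=R_{\lambda_j}\R^n$, with $\beta=\lambda_j-\lambda$, forcing polynomial $p_j$, and exponentially small perturbation $g_j$ satisfying $|g_j(t)|\le\|R_{\lambda_j}\|\,|g(t)|=\bigo(e^{-\alpha t})$. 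A harmless time shift from $[T,\infty)$ to $[0,\infty)$, together with an adjustment of constants on the compact initial piece, puts us exactly in the hypotheses of Lemma~\ref{polylem} and does not affect the asymptotic conclusion.

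The one hypothesis of Lemma~\ref{polylem} that must be verified is the decay condition \eqref{yexpdec} in the blocks with $\beta=\lambda_j-\lambda<0$, i.e.\ those with $\lambda_j<\lambda$. Here the additional assumption \eqref{ellams} does exactly the required work: for every such $j$ one has $\lambda_j\le\lambda_*$, hence $\lambda_j-\lambda\le\lambda_*-\lambda<0$, and therefore
\beqs
e^{(\lambda_j-\lambda)t}|y_j(t)|\le\|R_{\lambda_j}\|\,e^{(\lambda_*-\lambda)t}|y(t)|\to 0\quad\text{as }t\to\infty.
\eeqs
(When $\lambda=\lambda_1$ there are no such indices $j$, which is consistent with \eqref{ellams} being imposed only for $\lambda>\lambda_1$.) Lemma~\ref{polylem} then yields, for each $j$, a unique $X_j$-valued polynomial $q_j$ solving $q_j'+(\lambda_j-\lambda)q_j=p_j$ and satisfying $\|y_j-q_j\|=\bigo(e^{-\varep t})$ for every $\varep\in(0,\varep_{\alpha,\lambda_j-\lambda})$.

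I would then assemble $q=\sum_{j=1}^d q_j$, a genuine $\R^n$-valued polynomial. Since $q_j$ lies in the range of $R_{\lambda_j}$, properties \eqref{Pa} and \eqref{Pd} give $(A-\lambda I_n)q_j=(\lambda_j-\lambda)q_j$; summing the block equations and using $\sum_{j}p_j=p$ from \eqref{Pc} produces exactly \eqref{qeq}. The error estimate \eqref{yqest} follows from the triangle inequality,
\beqs
|y(t)-q(t)|\le\sum_{j=1}^d\|y_j(t)-q_j(t)\|=\bigo(e^{-\varep t})\quad\text{for every }\varep<\min_{1\le j\le d}\varep_{\alpha,\lambda_j-\lambda}=\delta,
\eeqs
and $\delta>0$ because each $\varep_{\alpha,\lambda_j-\lambda}>0$ by \eqref{epp} (as $\alpha>0$). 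Uniqueness reduces to the block level: any competing polynomial $\tilde q$ satisfying \eqref{qeq} and \eqref{yqest} has projections $R_{\lambda_j}\tilde q$ that solve the same block equation and inherit the block estimate, so they coincide with $q_j$ by the uniqueness clause of Lemma~\ref{polylem} (equivalently Lemma~\ref{ue}), forcing $\tilde q=q$.

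I expect the only genuinely delicate point to be the bookkeeping in the second paragraph: correctly matching the single assumption \eqref{ellams}, phrased in terms of the one threshold $\lambda_*$, to the separate decay requirements \eqref{yexpdec} of all the blocks with $\lambda_j<\lambda$ simultaneously. Everything else — the decoupling, the assembly of $q$, and the passage of the $\bigo$ bounds through the finite projection sum — is routine once the spectral picture is set up.
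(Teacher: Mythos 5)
Your proposal is correct and follows essentially the same route as the paper's own proof: projecting \eqref{odey} by the spectral projections $R_{\lambda_j}$, applying Lemma~\ref{polylem} blockwise with $\beta=\lambda_j-\lambda$ after a time shift, verifying \eqref{yexpdec} for the blocks with $\lambda_j<\lambda$ via \eqref{ellams}, and reassembling $q=\sum_j q_j$ using \eqref{Pa}--\eqref{Pe}, with uniqueness ultimately resting on Lemma~\ref{ue}. The only (immaterial) differences are cosmetic, e.g.\ your explicit $\|R_{\lambda_j}\|$ factors and your block-level phrasing of the uniqueness argument.
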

\begin{proof}
Similar to Lemma \ref{polylem}, the uniqueness of $q(t)$ comes from Lemma \ref{ue} below with $X=\R^n$, $N=1$, $\gamma_1=0$, $(\psi,\phi)=(e^t,t)$ and the exponential decay in \eqref{yqest}.

Let $1\le j\le d$. Applying $R_{\lambda_j}$ to equation \eqref{odey} and using property \eqref{Pd} give
\beq\label{Req1}
 (R_{\lambda_j}y(t))'=-(\lambda_j -\lambda)R_{\lambda_j} y(t) +R_{\lambda_j} p(t) + R_{\lambda_j} g(t),\quad\text{for } t>T.
\eeq
 
We apply Lemma \ref{polylem} to equation \eqref{Req1}  with $X=R_{\lambda_j}(\R^n)$ , $\|\cdot\|_X=|\cdot|$,
\beqs 
\beta:=\lambda_j-\lambda,\quad y(t):=R_{\lambda_j}y(T+t),\quad p(t):=R_{\lambda_j}p(T+t),\quad g(t):=R_{\lambda_j}g(T+t).
\eeqs 

In case $\beta<0$, we have 
$e^{\beta t}|R_{\lambda_j}y(T+t)|\le e^{(\lambda_* -\lambda )t}|y(T+t)|$, which goes to zero as $t\to\infty$, thanks to \eqref{ellams}. Thus, condition \eqref{yexpdec} is satisfied. Therefore, it follows \eqref{gball} and \eqref{pode2} that 
  \beq\label{Rq1}
  |R_{\lambda_j} y(T+t)-\tilde q_j(t)|= \bigo(e^{-\varep t})\quad \forall \varepsilon\in(0,\varep_{\alpha,\lambda_j-\lambda}),
  \eeq
where $\tilde q_j(t)$ is an $R_{\lambda_j}(\R^n)$-valued polynomial that satisfies 
\beq\label{Rq2}
\tilde q_j'(t)=- (\lambda_j- \lambda)\tilde q_j(t) +  R_{\lambda_j} p(T+t)
\quad \text{for }t\in\R.
\eeq

Define $q(t) =\sum_{j=1}^d \tilde q_j(t-T)$ for $t\in\R$. Then $q(t)$ is an $\R^n$-valued polynomial and, by properties \eqref{Pa} and \eqref{Pb}, $R_{\lambda_j}q(T+t)=\tilde q_j(t)$.
By using the last relation and replacing $T+t$ with $t$, we have from \eqref{Rq1} and \eqref{Rq2} that
 \beq \label{qj-est}
  |R_{\lambda_j}( y(t)-q(t))|= \bigo(e^{-\varep t}) \quad \forall \varep\in(0,\delta),
  \eeq
\beq\label{qjeq}
R_{\lambda_j} q'(t)=-(A- \lambda I_n)  R_{\lambda_j}q(t) +  R_{\lambda_j} p(t)
\quad \text{for }t\in\R.
\eeq

Summing up \eqref{qj-est} in $j$ from $1$ to $d$, and using the first inequality in \eqref{Pe}, we obtain \eqref{yqest}.

Finally, summing up \eqref{qjeq} in $j$ from $1$ to $d$, and using identity \eqref{Pc}, we obtain \eqref{qeq}.
\end{proof}

\begin{remark}\label{rmkoq}
We recall, below, some well-known observations from \cite{FS87,FS91}.
\begin{enumerate}[label=\rnum]
 \item\label{fsa}  Regarding Lemma \ref{polylem}, if $\beta\ne 0$, then the polynomial $q(t)$ is independent of solution $y(t)$, see formula \eqref{qdef}. In fact, it is the unique polynomial solution of \eqref{pode2}. 
In case $\beta=0$, $q(t)$ depends on $y(0)$.
 \item\label{fsb} Consequently, if $\lambda$ is not an eigenvalue of $A$, then the polynomial $q(t)$ in Lemma \ref{exp-odelem} is the unique polynomial solution of \eqref{qeq}, and independent of $y(t)$. In case $\lambda$ is an  eigenvalue of $A$, then $q(t)$ depends on $R_\lambda y(T)$.
 \end{enumerate}
\end{remark}

The following types of functions will be used in our asymptotic expansions.

\begin{definition}\label{ELdef} Define the iterated exponential and logarithmic functions as follows:
\begin{align*} 
&E_0(t)=t \text{ for } t\in\R,\text{ and } E_{m+1}(t)=e^{E_m(t)}  \text{ for } m\in \Z_+, \ t\in \R,\\
&\iln_0(t)=t\text{ for } t\in\R,\text{ and } \iln_{m+1}(t)= \ln(\iln_m(t)) \text{ for } m\in \Z_+,\ t>E_m(0).
\end{align*}

Let $m\in \Z_+$ and $k\in \N$, define $\R^k$-valued function $\LL_{m,k}$ by
\beqs 
\LL_{m,k}(t)=(\iln_{m+1}(t),\iln_{m+2}(t),\ldots,\iln_{m+k}(t)) \text{ for } t>E_{m+k-1}(0).
\eeqs

In particular, denote $\LL_k=\LL_{0,k}$ for $k\in \N$.
\end{definition}

Throughout the paper, we will use the convention 
\beqs
\sum_{k=1}^0 a_k=0\quad\text{and}\quad \prod_{k=1}^0 a_k=1.
\eeqs

For $m\in\Z_+$, note that $L_m(t)$ is positive and increasing  for $t>E_m(0)$, and
\beqs
\lim_{t\to\infty} \iln_m(t)=\infty,\quad \lim_{t\to\infty} \frac{\iln_{m+k}(t)^\lambda}{\iln_m(t)}=0\text{ for all }k\in\N, \ \lambda>0. 
\eeqs

For $m\in \N$, the first and second derivatives of $\iln_m(t)$ are
\begin{align*}
 L_m'(t)&=\frac 1{t\prod_{k=1}^{m-1} L_k (t)}, \\
L_m''(t)&=-\frac1{t^2 } \cdot \frac 1{\prod_{k=1}^{m-1} L_k (t)} \left\{1+\sum_{j=1}^{m-1}\frac 1{\prod_{p=1}^{j} L_p (t)} \right\}.
\end{align*}

Clearly, $\iln_m''(t)< 0$ on  $(E_m(0),\infty)$, hence,  
\beq \label{Lmcon}
L_m\text{ is concave on }(E_m(0),\infty) \text{ for all $m\in \N$.}
\eeq 

In the next two lemmas, we obtain asymptotic estimates for some integrals which will appear in later proofs. 

\begin{lemma}\label{plnlem}
Let $m\in\Z_+$ and $\lambda>0$, $\sigma>0$  be given. For any number $T_*>E_m(0)$, there exists a number $C>0$ such that
\beq\label{iine2}
 \int_0^t e^{-\sigma (t-\tau)}\iln_m(T_*+\tau)^{-\lambda}d\tau
 \le C \iln_m(T_*+t)^{-\lambda} \quad\text{for all }t\ge 0.
\eeq
\end{lemma}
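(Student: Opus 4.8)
The plan is to exploit the slow variation of the iterated logarithm $\iln_m$ together with the concentration of the kernel $e^{-\sigma(t-\tau)}$ near $\tau=t$. Write $h(\tau)=\iln_m(T_*+\tau)^{-\lambda}$ and $F(t)=\int_0^t e^{-\sigma(t-\tau)}h(\tau)\,d\tau$; the goal is to produce a constant $C>0$ with $F(t)\le C\,h(t)$ for all $t\ge 0$. Note that $h$ is positive, continuous, and decreasing on $[0,\infty)$, since $\iln_m$ is positive and increasing on $(E_m(0),\infty)\ni T_*$. The heuristic is that the exponential weight forces $\tau$ to stay near $t$, where $h(\tau)\approx h(t)$, while the slow growth of $\iln_m$ guarantees that $h$ does not change much over the relevant range.

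First I would establish the key slow-variation estimate: there exist $T_1>0$ and $C_1\ge 1$ such that $h(\tau)\le C_1\,h(t)$ whenever $t\ge T_1$ and $\tau\in[t/2,t]$. Equivalently, the ratio $\iln_m(T_*+t)/\iln_m(T_*+\tau)$ must stay bounded on this range. For $m\ge 1$ I would apply the mean value theorem to $\iln_m$ and use its concavity on $(E_m(0),\infty)$, recorded in \eqref{Lmcon} (so $\iln_m'$ is non-increasing), to get
\[
\iln_m(T_*+t)-\iln_m(T_*+\tau)\le \iln_m'(T_*+t/2)\,(t/2).
\]
The explicit derivative formula $\iln_m'(s)=\bigl(s\prod_{k=1}^{m-1}\iln_k(s)\bigr)^{-1}$ then shows, after dividing by $\iln_m(T_*+\tau)\ge \iln_m(T_*+t/2)$, that this quantity is bounded (indeed tends to $0$ as $t\to\infty$). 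The case $m=0$ reduces to the elementary ratio $(T_*+t)/(T_*+\tau)\le 2$.

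Next I would split $F(t)=\int_0^{t/2}+\int_{t/2}^t$. On $[t/2,t]$ the slow-variation bound yields
\[
\int_{t/2}^t e^{-\sigma(t-\tau)}h(\tau)\,d\tau\le C_1\,h(t)\int_{t/2}^t e^{-\sigma(t-\tau)}\,d\tau\le \frac{C_1}{\sigma}\,h(t).
\]
On $[0,t/2]$ I would use monotonicity, $h(\tau)\le h(0)$, and $e^{-\sigma(t-\tau)}\le e^{-\sigma t/2}$ to obtain $\int_0^{t/2}e^{-\sigma(t-\tau)}h(\tau)\,d\tau\le (h(0)/\sigma)\,e^{-\sigma t/2}$; since $\iln_m(T_*+t)^{\lambda}e^{-\sigma t/2}\to 0$ as $t\to\infty$, this is bounded by $C_2\,h(t)$ for a suitable $C_2$. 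Hence $F(t)\le (C_1/\sigma+C_2)\,h(t)$ for all $t\ge T_1$.

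Finally, for $t\in[0,T_1]$ both $F$ and $h$ are continuous and $h>0$, so $F(t)/h(t)$ is bounded on this compact interval by some $C_3$; taking $C=\max\{C_1/\sigma+C_2,\ C_3\}$ completes the argument. The main obstacle is the slow-variation estimate of the second paragraph---controlling the growth ratio of the iterated logarithm $\iln_m$ across $[T_*+t/2,\ T_*+t]$ uniformly in $t$---which is precisely where the concavity \eqref{Lmcon} and the explicit derivative formula for $\iln_m$ are needed.
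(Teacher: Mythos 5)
Your proof is correct and follows essentially the same route as the paper's: split the integral at a point proportional to $t$ (the paper does this through its cited bound \eqref{Fi1}, with $\theta t$ in place of your $t/2$), absorb the tail using that $e^{-c\sigma t}$ decays faster than any power of $\iln_m(T_*+t)$, and control the near part by comparing $\iln_m(T_*+\tau)$ with $\iln_m(T_*+t)$ via the concavity \eqref{Lmcon}. The only difference is cosmetic: the paper applies the chord inequality $\iln_m(T_*+\theta t)\ge \theta\,\iln_m(T_*+t)$ directly, while you derive the same slow-variation bound from the mean value theorem together with the explicit formula for $\iln_m'$.
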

\begin{proof}
We recall \cite[Lemma 3.1]{CaH1}, which states as follows.

\textit{ Let $F$ be a continuous, decreasing function from $[0,\infty)$ to $[0,\infty)$.
For any $\sigma>0$ and $\theta\in(0,1)$, one has  
\beq \label{Fi1}
 \int_0^t e^{-\sigma (t-\tau)}F(\tau)d\tau
 \le \frac1\sigma\Big(F(0)e^{-(1-\theta)\sigma t}+F(\theta t)\Big)\quad\forall t\ge 0.
\eeq
}

\emph{Case $m=0$.} Then $T_*>0$. Applying inequality \eqref{Fi1} to $F(t)=(T_*+t)^{-\lambda}$ and $\theta=1/2$ gives 
\beq \label{iine3}
 \int_0^t e^{-\sigma (t-\tau)}(T_*+\tau)^{-\lambda}d\tau
 \le \frac1\sigma\Big(T_*^{-\lambda}e^{-\sigma t/2}+(T_*+t/2)^{-\lambda}\Big)\quad\forall t\ge 0.
\eeq

Clearly, there exists a number $C_1>0$ such that 
 \beq\label{eF1}
 e^{-\sigma t/2}\le C_1 (T_*+t)^{-\lambda}\quad\text{for all } t\ge 0.
 \eeq

 Also, for $t\ge 0$,
 \beq\label{iine4}
 (T_*+t/2)^{-\lambda} = 2^\lambda(2T_*+t)^{-\lambda}\le  2^\lambda(T_*+t)^{-\lambda}.
 \eeq

From \eqref{iine3}, \eqref{eF1} and \eqref{iine4}, we obtain \eqref{iine2}. 

\medskip
\emph{Case $m\ge 1$.}  Let $F(t)=\iln_m(T_*+t)^{-\lambda}$ and fix $\theta\in(0,1)$.  By \eqref{Fi1},
\beq \label{iine5}
 \int_0^t e^{-\sigma (t-\tau)}\iln_m(T_*+\tau)^{-\lambda}d\tau
 \le \frac1\sigma\Big(\iln_m(T_*)^{-\lambda}e^{-(1-\theta)\sigma t}+\iln_m(T_*+\theta t)^{-\lambda}\Big)\quad\forall t\ge 0.
\eeq

Similar to \eqref{eF1}, there is a number $C_2>0$ such that
 \beq\label{eF2}
 e^{-(1-\theta)\sigma t}\le C_2 \iln_m(T_*+t)^{-\lambda}\quad\text{for all } t\ge 0.
 \eeq

By the concavity of $\iln_m$, see \eqref{Lmcon}, we have
\begin{align*}
\iln_m(T_*+\theta t)
&=\iln_m((1-\theta)T_*+\theta (T_*+t))\\
&\ge (1-\theta) \iln_m (T_*) + \theta \iln_m(T_*+t). 
\end{align*}

Since $\iln_m(T_*)>0$, it follows that $\iln_m(T_*+\theta t)\ge \theta \iln_m(T_*+t)$, and we have
\beq\label{iine6}
\iln_m(T_*+\theta t)^{-\lambda}\le \theta^{-\lambda}\iln_m(T_*+t)^{-\lambda}. 
\eeq

We obtain \eqref{iine2} from \eqref{iine5}, \eqref{eF2} and \eqref{iine6}.
\end{proof}

\begin{lemma}\label{LL-lem} 
Let $ \sigma >0$, $N \in \N$, and $k_j\in\N$, $\alpha_j\in\R$ for $j=1,2,\ldots, N$.  
Denote 
$$k_*=\max\{k_j:1\le j\le N\}\text{ and let }T_*>E_{k_*}(0).$$

Then, it holds, for all $\lambda\in(0,1)$, that 
\beq\label{log}
\int_0^t e^{-\sigma (t-\tau)} \prod_{j=1}^N (\iln_{k_j}(T_*+\tau))^{\alpha_j} d\tau  =\frac{1}{\sigma} \prod_{j=1}^N (\iln_{k_j}(T_*+t))^{\alpha_j}  +  \bigo(t^{-\lambda}).
\eeq

In particular, for $k\in\N$, $T_*>E_k(0)$, and any $\alpha\in\R$, $\lambda \in (0,1)$, one has 
\beq \label{pow}
\int_0^t e^{-\sigma (t-\tau)}  (\iln_k(T_*+\tau))^\alpha d\tau  =\frac{1}{\sigma} (\iln_k(t+ T_{*}))^\alpha + \bigo(t^{-\lambda}).
\eeq

\end{lemma}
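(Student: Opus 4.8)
The plan is to reduce \eqref{log} to a single integration by parts, which extracts the leading term $\frac1\sigma\prod_{j=1}^N(\iln_{k_j}(T_*+t))^{\alpha_j}$ and leaves a remainder that the earlier decay estimate, Lemma \ref{plnlem}, can absorb. Write $P(\tau)=\prod_{j=1}^N(\iln_{k_j}(T_*+\tau))^{\alpha_j}$, which is positive and $C^1$ on $[0,\infty)$ since $T_*>E_{k_*}(0)\ge E_{k_j}(0)$ forces each $\iln_{k_j}(T_*+\tau)>0$. Rewriting the left-hand side of \eqref{log} as $e^{-\sigma t}\int_0^t e^{\sigma\tau}P(\tau)\,d\tau$ and integrating by parts gives
$$\int_0^t e^{-\sigma(t-\tau)}P(\tau)\,d\tau=\frac1\sigma P(t)-\frac1\sigma e^{-\sigma t}P(0)-\frac1\sigma\int_0^t e^{-\sigma(t-\tau)}P'(\tau)\,d\tau.$$
The first term is exactly the claimed main term, and the boundary term $\frac1\sigma e^{-\sigma t}P(0)$ decays exponentially, hence is $\bigo(t^{-\lambda})$ for every $\lambda>0$. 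Everything thus reduces to showing that the last integral is $\bigo(t^{-\lambda})$ for each $\lambda\in(0,1)$.

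To control $P'$, I would use the logarithmic-derivative identity $P'(\tau)/P(\tau)=\sum_{j=1}^N\alpha_j\,\iln_{k_j}'(T_*+\tau)/\iln_{k_j}(T_*+\tau)$ together with the formula for $\iln_m'$ recorded just before \eqref{Lmcon}, which yields $\iln_{k_j}'/\iln_{k_j}=1/\bigl((T_*+\tau)\prod_{k=1}^{k_j}\iln_k(T_*+\tau)\bigr)$. Since each $\iln_k(T_*+\tau)$ is positive and increasing, it is bounded below by $\iln_k(T_*)>0$, so this ratio is $\bigo((T_*+\tau)^{-1})$ and hence $|P'(\tau)|\le C_0(T_*+\tau)^{-1}P(\tau)$ for some $C_0>0$. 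Moreover, each factor $\iln_{k_j}(T_*+\tau)^{\alpha_j}$ grows more slowly than any positive power of $T_*+\tau$, so for every $\delta>0$ one has $P(\tau)=\bigo((T_*+\tau)^{\delta})$, and therefore $|P'(\tau)|\le C(T_*+\tau)^{-1+\delta}$.

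Finally, fixing $\lambda\in(0,1)$ and setting $\delta=1-\lambda>0$, the preceding bound reads $|P'(\tau)|\le C(T_*+\tau)^{-\lambda}$. Applying the case $m=0$ of Lemma \ref{plnlem} to the decreasing function $(T_*+\tau)^{-\lambda}$ then gives $\int_0^t e^{-\sigma(t-\tau)}|P'(\tau)|\,d\tau\le C'(T_*+t)^{-\lambda}=\bigo(t^{-\lambda})$, which is precisely what is needed to conclude \eqref{log}; the particular case \eqref{pow} is just \eqref{log} with $N=1$. The one delicate point, and the only place the hypothesis is used, is the restriction $\lambda\in(0,1)$: the remainder is genuinely of order $(T_*+t)^{-1}$ up to slowly varying logarithmic factors, so one must take $\delta>0$, which is possible exactly when $\lambda<1$. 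I expect this bookkeeping around the derivative of the product of iterated logarithms, and verifying that its power-type bound is compatible with Lemma \ref{plnlem}, to be the main (though routine) obstacle.
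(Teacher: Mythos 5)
Your proposal is correct and follows essentially the same route as the paper: integrate by parts to extract the main term $\frac1\sigma P(t)$, observe the boundary term is exponentially small, bound the derivative remainder by $C(T_*+\tau)^{-\lambda}$ (the paper writes $P'$ out explicitly as the sum over $j$ of $\alpha_j F/\bigl((T_*+\tau)\prod_{p=1}^{k_j}\iln_p(T_*+\tau)\bigr)$ and bounds each summand, which is the same estimate as your logarithmic-derivative computation), and then absorb it via the $m=0$ case of Lemma \ref{plnlem}. No gaps.
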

\begin{proof}
Denote $F(t)=\prod_{j=1}^N (\iln_{k_j}(t))^{\alpha_j} $ and $I=\int_0^t e^{-\sigma(t- \tau)} F(T_*+\tau) d\tau$.
Integrating by parts gives
\beq\label{F0}
 I =  \frac{1}{\sigma} \Big( F(T_*+t)  -  F(T_*)  e^{-\sigma t}\Big)
 - \sum_{j=1}^N \frac{1}{\sigma}\int_0^t  e^{-\sigma (t-\tau)}   \frac{\alpha_j F(T_*+\tau)}{(T_*+\tau)\prod_{p=1}^{k_j}\iln_p(T_*+\tau) }
d\tau.
 \eeq
 
Let $\lambda$ be any number in the interval $(0,1)$. Note that
 \beq\label{F1}
 F(T_*) e^{-\sigma t}=\bigo(t^{-\lambda}).
 \eeq

 Also, there exists, for each $j=1,2,\ldots,N$, a positive number  $C_j>0$ such that
 \beqs
\left|\frac{ F(T_*+t)}{(T_*+t)\prod_{p=1}^{k_j}\iln_p(T_*+t) }\right|\le C_j (T_*+t)^{-\lambda},\quad \forall t\ge 0.
 \eeqs
By this estimate and inequality \eqref{iine2} with $m=0$,  
\beq \label{F2}
\int_0^t e^{-\sigma (t-\tau)}   \frac{ F(T_*+\tau)}{(T_*+\tau)\prod_{p=1}^{k_j}\iln_p(T_*+\tau) }
 d\tau=\bigo(t^{-\lambda}).
 \eeq 
Then combining \eqref{F0}, \eqref{F1} and \eqref{F2},  we obtain \eqref{log}. Inequality \eqref{pow} is a special case of \eqref{log} when $N=1$, $k_1=k$, and $\alpha_1=\alpha$.
\end{proof}

Our results will cover more complicated expansions than \eqref{expan}.
They involve the following type of functions which are more general than the polynomials in Definition \ref{polydef}.

\begin{definition}\label{Pdef}
For $z=(z_1,z_2,\ldots,z_k)\in (0,\infty)^k$  and $\alpha=(\alpha_1,\alpha_2,\ldots,\alpha_k)\in \R^k$,  with $k\in\N$, denote 
\beqs
z^\alpha=\prod_{j=1}^k z_j^{\alpha_j}.
\eeqs   

Let $X$ be a real linear space and $k\in \N$. Define $\classP(k,X)$ to be the set of functions $p:(0,\infty)^k\to X$ of the form
\beq\label{Ppz}
p(z)=\sum_{\alpha\in I } c_\alpha z^\alpha\text{ for $z\in (0,\infty)^k$,}
\eeq 
where $I$ is a non-empty, finite subset of $\R^k$, and $c_\alpha$'s are vectors in $X$.
\end{definition}

Below are immediate observations about Definition \ref{Pdef}. 

\begin{enumerate}[label=(\alph*)]
 \item\label{Ca} $\classP(k,X)$ contains all polynomials from $\R^k$ to $X$, in the sense that, if $p:\R^k\to X$ is a polynomial, then its restriction on $(0,\infty)^k$ belongs to $\classP(k,X)$.
 
 \item\label{Cb} Each $\classP(k,X)$ is a linear space.

 \item\label{Cc} Given $m,k\in\N$ with $m>k$. 
 For $z=(z_1,\ldots,z_m)\in\R^m$, denote $\bar z=(z_1,\ldots,z_k)$. Let $p\in \classP(k,X)$.
 Define function $\mathcal I_{k,m}p:(0,\infty)^m\to X$ by 
 \beq \label{Ikm}
 z\in (0,\infty)^m \mapsto (\mathcal I_{k,m}p)(z):=p(\bar z).
 \eeq 
 
 With the standard embedding $\R^k\simeq\R^k\times\{ 0\}^{m-k}\subset \R^m$, we have $\mathcal I_{k,m}p=p$ on $(0,\infty)^k$.
 Hence, $\mathcal I_{k,m}p$ can be seen as an extension of function $p$ from $(0,\infty)^k$ to $(0,\infty)^m$.
 
 Suppose $p$ is given by \eqref{Ppz}. Then, for $ z\in (0,\infty)^m$, 
\beqs
(\mathcal I_{k,m}p)(z)=\sum_{\alpha\in I } c_\alpha \bar z^\alpha=\sum_{(\alpha,0_{m-k})\in J} c_{\alpha} z^{(\alpha,0_{m-k})},
\eeqs
 where  $0_{m-k}$ denotes the zero vector in $\R^{m-k}$, and  $J=I\times\{ 0\}^{m-k}\subset \R^m$, which is non-empty and finite.
 Hence, $\mathcal I_{k,m}p$ belongs to $\classP(m,X)$.

 Clearly, $\mathcal I_{k,m}: \classP(k,X) \to \classP(m,X)$ is linear and one-to-one. Therefore, we have the embedding
\beq \label{Pembed}
\classP(k,X)\simeq \mathcal I_{k,m}(\classP(k,X))\subset \classP(m,X).
\eeq
\end{enumerate}

\begin{lemma}\label{Pprop}
The following statements hold true.
\begin{enumerate}[label={\tnum}]
 \item \label{P1} If $p_j\in\classP(k,X_j)$ for $1\le j\le m$, where $m\ge 1$, $X_j$'s are linear spaces, and $L$ is an $m$-linear mapping from $\prod_{j=1}^m X_j$ to $X$, then
 $L(p_1,p_2,\ldots,p_m)\in\classP(k,X)$. 
 \item \label{P2} If $p\in\classP(k,X)$ and $L:X\to Y$ is a linear mapping, then $Lp\in\classP(k,Y)$.
 \item \label{P3} If $p\in\classP(k,\R^n)$ and $1\le j\le n$, then  the canonical projection $\pi_j p$, that maps $p$ to its $j$-th component, belongs to $\classP(k,\R)$. 
 \item \label{P4} If $p\in \classP(k,\R)$ and $q\in \classP(k,X)$, then the product $p q\in \classP(k,X)$.
 
 Consequently, if $p_j\in \classP(k,\R)$ for $1\le j\le m$, then $p_1 p_2\ldots p_m\in \classP(k,\R)$.
 
 \item \label{P5} If $p\in\classP(k,\R^n)$ and $q$ is a polynomial from $\R^n$ to $X$, then the composition $q\circ p$ belongs to $\classP(k,X)$. 
 \item \label{P6} In case $X$ is a normed space, if $p\in \classP(k,X)$, then so is each partial derivative $\partial p(z)/\partial z_j$, for $z=(z_1,z_2,\ldots,z_k)\in(0,\infty)^k$ and $1\le j\le k$.
\end{enumerate} 
\end{lemma}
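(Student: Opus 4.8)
The plan is to prove (i) first as the central ingredient and then obtain (ii)--(v) as successive specializations of it, treating (vi) separately by a direct termwise differentiation. The whole argument rests on one algebraic identity: for $z\in(0,\infty)^k$ and $\alpha,\beta\in\R^k$ one has $z^\alpha z^\beta=z^{\alpha+\beta}$, valid because each coordinate $z_j>0$ admits arbitrary real powers with $z_j^{\alpha_j}z_j^{\beta_j}=z_j^{\alpha_j+\beta_j}$. I also record two facts that keep everything inside the class: the constant function $1$ belongs to $\classP(k,\R)$ (take $I=\{0\}$, $c_0=1$, so $z^0=1$), and the zero function belongs to $\classP(k,X)$ (take $I=\{0\}$, $c_0=0$).

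For (i), I would write $p_j(z)=\sum_{\alpha\in I_j}c^{(j)}_\alpha z^\alpha$ with each $I_j\subset\R^k$ finite and nonempty, expand $L(p_1(z),\dots,p_m(z))$ by $m$-linearity into
$$\sum_{\alpha_1\in I_1}\cdots\sum_{\alpha_m\in I_m} L\bigl(c^{(1)}_{\alpha_1},\dots,c^{(m)}_{\alpha_m}\bigr)\, z^{\alpha_1+\cdots+\alpha_m},$$
and then regroup the terms according to the value $\beta=\alpha_1+\cdots+\alpha_m$. The new index set $I=\{\alpha_1+\cdots+\alpha_m:\alpha_j\in I_j\}$ is nonempty and finite (at most $\prod_j|I_j|$ elements), and the coefficient of $z^\beta$ is the finite sum $\sum_{\alpha_1+\cdots+\alpha_m=\beta}L(c^{(1)}_{\alpha_1},\dots,c^{(m)}_{\alpha_m})\in X$; hence the result lies in $\classP(k,X)$. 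Statement (ii) is then the case $m=1$; (iii) is (ii) applied to the linear projection $\pi_j:\R^n\to\R$; and (iv) is the case $m=2$ with $(X_1,X_2)=(\R,X)$ and $L$ the scalar multiplication $(s,x)\mapsto sx$, the several-factor consequence following by an easy induction.

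For (v), I would expand the polynomial $q(y)=\sum_\beta a_\beta y^\beta$ (finite sum over $\beta\in\Z_+^n$, with $y^\beta=y_1^{\beta_1}\cdots y_n^{\beta_n}$) and substitute $y=p(z)$, obtaining $q(p(z))=\sum_\beta a_\beta\prod_{i=1}^n (\pi_i p)(z)^{\beta_i}$. Each component $\pi_i p$ lies in $\classP(k,\R)$ by (iii); each power $(\pi_i p)^{\beta_i}$ and the product over $i$ stay in $\classP(k,\R)$ by (iv) (using the constant $1$ for empty products); multiplying the resulting scalar function by the vector $a_\beta$ keeps it in $\classP(k,X)$ by (ii); and finiteness of the $\beta$-sum together with the linear-space structure of $\classP(k,X)$ closes the argument. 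Finally, for (vi) I would differentiate termwise: since $\partial z^\alpha/\partial z_j=\alpha_j z^{\alpha-e_j}$ for $z\in(0,\infty)^k$, where $e_j$ is the $j$-th standard basis vector of $\R^k$, the derivative of $p(z)=\sum_{\alpha\in I}c_\alpha z^\alpha$ is $\sum_{\alpha\in I}\alpha_j c_\alpha z^{\alpha-e_j}$, again a $\classP(k,X)$ function (discarding the vanishing terms with $\alpha_j=0$, and falling back on the zero function should every term vanish).

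Since the statements are almost entirely bookkeeping, there is no deep obstacle; the only points demanding care are ensuring the regrouped index sets stay finite and nonempty in (i), and confirming that the constant and zero functions genuinely belong to the class, so that the reductions in (v) and (vi) never accidentally leave $\classP(k,X)$.
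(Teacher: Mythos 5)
Your proposal is correct and follows essentially the same route as the paper's proof: part (i) is established by the same multilinear expansion $L(p_1(z),\ldots,p_m(z))=\sum L(c^{(1)}_{\alpha_1},\ldots,c^{(m)}_{\alpha_m})\,z^{\alpha_1+\cdots+\alpha_m}$, with (ii)--(v) obtained as the same specializations (in particular (iv) via $m=2$ and scalar multiplication, and (v) via (iii) and (iv)), while (vi) the paper simply declares obvious and you spell out by termwise differentiation. Your extra bookkeeping --- regrouping exponents into a genuine finite index set, and checking that the constant $1$ and the zero function belong to $\classP(k,X)$ --- only makes explicit what the paper leaves implicit.
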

\begin{proof}
Within this proof, all summations $\Sigma$ are meant to have finitely many terms.

We prove \ref{P1} first. For $z\in (0,\infty)^k$, we have 
\begin{align*}
 L(p_1(z),p_2(z),\ldots,p_m(z))
 &=L\Big(\sum_{\alpha_1\in \R^k} c^{(1)}_{\alpha_1}z^{\alpha_1},\sum_{\alpha_2\in \R^k} c^{(2)}_{\alpha_2}z^{\alpha_2},\ldots,\sum_{\alpha_m\in \R^k} c^{(m)}_{\alpha_m}z^{\alpha_m}\Big)\\
 &=\sum_{\alpha_1,\alpha_2,\ldots \alpha_m\in \R^k} L( c^{(1)}_{\alpha_1},c^{(2)}_{\alpha_2}, \ldots,c^{(m)}_{\alpha_m})
 z^{\alpha_1}z^{\alpha_2}\ldots z^{\alpha_m}\\
 &=\sum_{\alpha_1,\alpha_2,\ldots \alpha_m\in \R^k} L( c^{(1)}_{\alpha_1},c^{(2)}_{\alpha_2}, \ldots,c^{(m)}_{\alpha_m}) z^{\sum_{j=1}^m \alpha_j}.
 \end{align*}
Therefore, $L(p_1,p_2,\ldots,p_m)\in \classP(k,X)$.

Then \ref{P2} is a consequence of \ref{P1} when $m=1$, and \ref{P3} is a consequence of \ref{P2} with $L=\pi_j$.

For the first part of \ref{P4}, we apply \ref{P1} to $m=2$, $X_1=\R$, $X_2=X$, and $L:\R\times X\to X$ defined by $L(p,q)=p q$.
For the second part, we apply the first part many times with $X=\R$.

We now prove \ref{P5}. Suppose $p=(p_1,p_2,\ldots,p_n)$ and, for $z\in\R^k$, $q(z)=\sum_{\alpha\in Z_+^k}c_\alpha z^\alpha$, where  $c_\alpha\in X$. 
For $z\in (0,\infty)^k$, we have
\begin{align*}
 q(p(z))=\sum_{\alpha\in \Z_+^k} c_\alpha p(z)^{\alpha}
 =\sum_{\alpha_1,\ldots,\alpha_k\in \Z_+} c_\alpha \prod_{j=1}^k p_j(z)^{\alpha_j}.
\end{align*}

By \ref{P3}, each $p_j\in \classP(k,\R)$, and by \ref{P4}, noticing that $\alpha_j\in \Z_+$, $p_j(z)^{\alpha_j}\in \classP(k,\R)$.
By \ref{P4}, again, $\prod_{j=1}^k p_j(z)^{\alpha_j}\in \classP(k,\R)$ and, then, $c_\alpha \prod_{j=1}^k p_j(z)^{\alpha_j}\in \classP(k,X)$.
Summing finitely many such terms yields $p(q(z))\in \classP(k,X)$.

Finally, \ref{P6} is obviously true.
\end{proof}

The next result is similar to Lemma \ref{exp-odelem}, but allows the function $p$ in equation \eqref{odey} to take a more general form.

\begin{lemma}\label{iterlog-odelem}
Given $m\in \Z_+$, $k\in\N$, and a number $t_0>E_{m+k}(0)$.
Let  $p\in \classP(k,\R^n)$, and let function $g\in C([t_0,\infty),\R^n)$ satisfy 
\beq \label{gprop}
|g(t)|=\bigo(\iln_m^{-\alpha}(t))\text{ for some }\alpha>0.
\eeq 

Suppose $y\in C([t_0,\infty),\R^n)$ is a solution of 
 \beq\label{yeq4}
 y'=-Ay+p(\LL_{m,k}(t))+g(t)\text{ on } (t_0,\infty).
 \eeq

 Then there exists  $\delta >0$  such that 
 \beq\label{ypL}
 |y(t)-A^{-1}p(\LL_{m,k}(t))|=\bigo(\iln_m(t)^{-\delta}).
 \eeq
 \end{lemma}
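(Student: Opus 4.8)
The plan is to remove the slowly varying forcing by subtracting its quasi-static profile and then to invoke the constant-coefficient linear theory. Since $A$ has positive eigenvalues (Assumption \ref{assumpA}), it is invertible, so I set
\beqs
P(t)=A^{-1}p(\LL_{m,k}(t)),\qquad w(t)=y(t)-P(t),
\eeqs
which is exactly the quantity whose decay is claimed in \eqref{ypL}. By Lemma \ref{Pprop}\ref{P2} applied to the linear map $A^{-1}$, we have $A^{-1}p\in\classP(k,\R^n)$, and $AP(t)=p(\LL_{m,k}(t))$. Substituting $y=w+P$ into \eqref{yeq4} and using this identity, the term $p(\LL_{m,k}(t))=AP(t)$ cancels and $w$ solves the linear equation
\beqs
w'(t)=-Aw(t)+g(t)-P'(t)\quad\text{on }(t_0,\infty).
\eeqs
Denote the new forcing by $h=g-P'$.

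The first real computation is to estimate $P'$. Writing $p(z)=\sum_{\alpha\in I}c_\alpha z^\alpha$ and differentiating via the chain rule together with the derivative formula $\iln_{m+j}'(t)=\big(t\prod_{i=1}^{m+j-1}\iln_i(t)\big)^{-1}$ from Definition \ref{ELdef}, one finds
\beqs
\ddt\prod_{j=1}^k\iln_{m+j}(t)^{\alpha_j}=\Big(\prod_{j=1}^k\iln_{m+j}(t)^{\alpha_j}\Big)\sum_{j=1}^k\frac{\alpha_j}{t\prod_{i=1}^{m+j}\iln_i(t)}.
\eeqs
Every product of real powers of iterated logarithms grows more slowly than any positive power of $t$, while the differentiated factors each carry a $1/t$, so $|P'(t)|=\bigo(t^{-1+\varep})$ for every $\varep>0$; fixing $\varep=1/2$ gives $|P'(t)|=\bigo(t^{-1/2})$.

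Next I would pick $\delta>0$ so that $h$ decays at the $\iln_m^{-\delta}$ scale. Since $|g(t)|=\bigo(\iln_m(t)^{-\alpha})$, we have $|g(t)|=\bigo(\iln_m(t)^{-\delta})$ for any $\delta\le\alpha$. The $t^{-1/2}$ bound on $P'$ is $\bigo(\iln_m(t)^{-\delta})$ for \emph{every} $\delta$ when $m\ge1$ (the iterated-log scale is sub-polynomial), and is $\bigo(\iln_0(t)^{-\delta})=\bigo(t^{-\delta})$ for $\delta\le 1/2$ when $m=0$. Thus, taking $\delta=\alpha$ if $m\ge1$ and $\delta=\min\{\alpha,1/2\}$ if $m=0$, we get $|h(t)|=\bigo(\iln_m(t)^{-\delta})$. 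Now I run the eigenspace argument of Lemma \ref{exp-odelem}: for each distinct eigenvalue $\lambda_j$, applying $R_{\lambda_j}$ and using \eqref{Pd} gives $(R_{\lambda_j}w)'=-\lambda_j R_{\lambda_j}w+R_{\lambda_j}h$, so by variation of parameters
\beqs
R_{\lambda_j}w(t)=e^{-\lambda_j(t-t_0)}R_{\lambda_j}w(t_0)+\int_{t_0}^t e^{-\lambda_j(t-\tau)}R_{\lambda_j}h(\tau)\,d\tau.
\eeqs
Because $\lambda_j>0$, the homogeneous part decays exponentially, hence is $\bigo(\iln_m(t)^{-\delta})$ for any initial datum (no decay hypothesis on $y$ is needed here, in contrast to \eqref{ellams}); after shifting the time variable by $t_0$, Lemma \ref{plnlem} with $\sigma=\lambda_j$, $T_*=t_0$ and exponent $\delta$ bounds the integral by $C\iln_m(t)^{-\delta}$. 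Summing over $j$ and using the first inequality in \eqref{Pe} yields $|w(t)|=\bigo(\iln_m(t)^{-\delta})$, which is precisely \eqref{ypL}.

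The main obstacle is the control of $P'(t)$, i.e. confirming that the time-derivative of the slowly varying forcing $p(\LL_{m,k}(t))$ is negligible at the $\iln_m^{-\delta}$ scale. This is delicate only in the borderline case $m=0$, where $\iln_0=t$ is a genuine power of $t$ and one cannot take $\delta=\alpha$ freely; there the decay rate of $P'$ itself caps the achievable $\delta$. For $m\ge1$ the iterated-logarithm scale is so coarse that $P'$ is absorbed automatically and one recovers the full exponent $\delta=\alpha$.
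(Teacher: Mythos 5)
Your proposal is correct, and it organizes the argument differently from the paper. The paper keeps the forcing $p(\LL_{m,k}(t))$ inside the equation: it projects onto the eigenspaces $R_{\lambda_j}(\R^n)$, applies the variation of constants formula, and extracts the leading term $\frac{1}{\lambda_j}\LL_{m,k}(t)^\alpha$ from the convolution integral via Lemma \ref{LL-lem}, whose proof is an integration by parts that pushes the time derivative onto the iterated-logarithm product. You instead subtract the quasi-static profile $P(t)=A^{-1}p(\LL_{m,k}(t))$ at the outset, so the slowly varying forcing cancels exactly and the remainder $w=y-P$ solves a linear equation with small forcing $g-P'$; the eigenspace projection then needs only the elementary convolution bound of Lemma \ref{plnlem}, and Lemma \ref{LL-lem} is never invoked. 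Your differentiation of $P$ is precisely the ``differential'' counterpart of the paper's integration by parts, so the two computations are morally the same, but your decomposition makes the structure cleaner and isolates the one genuinely delicate point, which you handle correctly: for $m\ge 1$ the bound $|P'(t)|=\bigo(t^{-1+\varep})$ is absorbed at any iterated-log rate so $\delta=\alpha$ survives, whereas for $m=0$ the rate of $P'$ caps $\delta$ (your $\min\{\alpha,1/2\}$, or $\min\{\alpha,1-\varep\}$ with smaller $\varep$, versus the paper's $\min\{\alpha,\gamma\}$ with any $\gamma\in(0,1)$ — immaterial, since the lemma asserts only the existence of some $\delta>0$). Your side remarks are also accurate: all hypotheses check out (Lemma \ref{plnlem} applies with $T_*=t_0>E_{m+k}(0)>E_m(0)$), and no decay hypothesis on $y$ analogous to \eqref{ellams} is needed because here every eigenvalue of $-A$ is strictly negative, unlike the shifted operator $-(A-\lambda I_n)$ of Lemma \ref{exp-odelem}. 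What each route buys: the paper's version reuses machinery (Lemma \ref{LL-lem}) that is stated once and serves the broader development, while yours is more self-contained at this spot and generalizes readily to any forcing whose derivative is negligible at the $\iln_m^{-\delta}$ scale, not just elements of $\classP(k,\R^n)$.
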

\begin{proof}
In this proof, $\sum_{\alpha}$ denotes a sum over finitely many $\alpha\in \R^k$. 
Let 
\beq\label{pz} p(z)=\sum_{\alpha} c_\alpha z^\alpha \text{ for $z\in (0,\infty)^k$,}\eeq  
 where each $c_\alpha$ is constant vector in $\R^n$.

Let $1\le j\le d$. Applying $R_{\lambda_j}$ to equation \eqref{yeq4} gives
\beqs
 (R_{\lambda_j}y)'=-\lambda_j(R_{\lambda_j} y) +R_{\lambda_j} p(\LL_{m,k}(t)) + R_{\lambda_j} g(t) \text{ on } (t_0,\infty).
 \eeqs
 
 By the variation of constants formula, we obtain, for $t\ge 0$,
 \beq \label{Rly}
 \begin{aligned}
R_{\lambda_j} y(t_0+t)
&= R_{\lambda_j} y(0) e^{-\lambda_j t} + \int_0^t e^{-\lambda_j (t-\tau)} R_{\lambda_j} p(\LL_{m,k}(t_0+\tau)) d\tau\\
&\quad + \int_0^t e^{-\lambda_j (t-\tau)} R_{\lambda_j} g(t_0+\tau) d \tau . 
 \end{aligned}
\eeq

  Obviously, the first term on the right-hand side of \eqref{Rly} is of $\bigo( \iln_m(t_0+\tau)^{-\lambda})$ for any $\lambda>0$.
For the last term on the right-hand side of \eqref{Rly}, using property \eqref{gprop} and then applying Lemma \ref{plnlem}, we have
 \begin{align*}
\Big|\int_0^t e^{-\lambda_j (t-\tau)}R_{\lambda_j} g(t_0+\tau) d \tau \Big|
\le C \int_0^t e^{-\lambda_j(t- \tau)} \iln_m(t_0+\tau)^{-\alpha} d\tau 
=\bigo( \iln_m(t_0+\tau)^{-\alpha}). 
 \end{align*}

Rewriting the second term on the right-hand side of \eqref{Rly} with the use of the explicit form \eqref{pz} of $p(z)$, and applying \eqref{log} of Lemma \ref{LL-lem} give, for any $\gamma\in(0,1)$,
 \begin{align*}
\int_0^t e^{-\lambda_j (t-\tau)}R_{\lambda_j} p(\LL_{m,k}(t_0+\tau)) d \tau 
&=\sum_\alpha \Big(  \int_0^t e^{-\lambda_j(t-\tau)} \LL_{m,k}(t_0+\tau)^\alpha d \tau  \Big) R_{\lambda_j}c_\alpha \\
&= \sum_\alpha   \frac{1}{\lambda_j} \LL_{m,k}(t_0+t)^\alpha R_{\lambda_j} c_\alpha  + \bigo(t^{-\gamma})\\
&= A^{-1} R_{\lambda_j} \Big( \sum_\alpha   c_\alpha  \LL_{m,k}(t_0+t)^\alpha  \Big) + \bigo(\iln_m(t_0+t)^{-\gamma}).
 \end{align*}
 
Combining the above, we obtain
 \begin{align*}
R_{\lambda_j} y(t_0+t)
&= A^{-1} R_{\lambda_j} p(\LL_{m,k}(t_0+t)) + \bigo(\iln_m(t_0+t)^{-\delta})
  \end{align*}
for some $\delta>0$, which implies
 \begin{align*}
R_{\lambda_j} y(t)
&= A^{-1} R_{\lambda_j} p(\LL_{m,k}(t)) + \bigo(\iln_m(t)^{-\delta}). 
  \end{align*}
  
Summing up this equation in $j$ from $1$ to $d$, and using \eqref{Pc} give 
  \begin{align*}
 y(t) &=  A^{-1}p(\LL_{m,k}(t)) + \bigo(\iln_m(t)^{-\delta}),
 \end{align*} 
which proves \eqref{ypL}.
\end{proof}

\begin{corollary}\label{powlog-odelem}
Let $m\in\Z_+$ and $j\in\N$ such that $j > m$, and let $T_*>E_j(0)$. Assume $g$ is a continuous  function from $[T_*,\infty)$ to $\R^n$ satisfying $|g(t)|=\bigo(\iln_m(t)^{-\alpha})$ for some $\alpha>0$.
Let  $p:\R\to\R^n$ be a polynomial, and $y\in C([T_*,\infty),\R^n)$ be a solution of 
 \beqs
 y'=-Ay+p(\iln_j(t))+g(t)\text{ on } (T_*,\infty).
 \eeqs
Then there exists  $\delta >0$  such that 
 \beq\label{yAp}
 |y(t)-A^{-1}p(\iln_j(t))|=\bigo(\iln_m(t)^{-\delta}).
 \eeq
 \end{corollary}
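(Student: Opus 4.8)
The plan is to obtain Corollary \ref{powlog-odelem} as an immediate specialization of Lemma \ref{iterlog-odelem}, by recasting the single iterated logarithm $\iln_j$ as the last coordinate of a vector $\LL_{m,k}$ and the one-variable polynomial $p$ as an element of $\classP(k,\R^n)$. First I would set $k=j-m$, which is a positive integer precisely because $j>m$, so that $m+k=j$. With this choice the vector function of Definition \ref{ELdef} reads $\LL_{m,k}(t)=(\iln_{m+1}(t),\ldots,\iln_{m+k}(t))$, whose $k$-th (last) component is $\iln_{m+k}(t)=\iln_j(t)$. Since the hypothesis gives $T_*>E_j(0)=E_{m+k}(0)$, I can legitimately take $t_0:=T_*$ as the starting time demanded by Lemma \ref{iterlog-odelem}.

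Next I would re-express the forcing term. Writing $p(s)=\sum_{i=0}^N a_i s^i$ with $a_i\in\R^n$, I define $\tilde p:(0,\infty)^k\to\R^n$ by $\tilde p(z)=p(z_k)=\sum_{i=0}^N a_i z_k^{\,i}$ for $z=(z_1,\ldots,z_k)$. This $\tilde p$ is a polynomial in the $k$ variables $z_1,\ldots,z_k$ (one that happens to depend only on $z_k$), so by observation \ref{Ca} its restriction to $(0,\infty)^k$ belongs to $\classP(k,\R^n)$; equivalently, it is the finite sum $\sum_{i=0}^N a_i z^{(0,\ldots,0,i)}$, which matches \eqref{Ppz} of Definition \ref{Pdef} directly. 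By construction $\tilde p(\LL_{m,k}(t))=p\big((\LL_{m,k}(t))_k\big)=p(\iln_{m+k}(t))=p(\iln_j(t))$, so the equation satisfied by $y$ becomes exactly $y'=-Ay+\tilde p(\LL_{m,k}(t))+g(t)$ on $(T_*,\infty)$.

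With this identification all hypotheses of Lemma \ref{iterlog-odelem} are verified: $m\in\Z_+$, $k\in\N$, $t_0=T_*>E_{m+k}(0)$, $\tilde p\in\classP(k,\R^n)$, and $g\in C([T_*,\infty),\R^n)$ obeys $|g(t)|=\bigo(\iln_m(t)^{-\alpha})$, which is precisely the decay condition \eqref{gprop}. Applying the lemma then produces some $\delta>0$ with $|y(t)-A^{-1}\tilde p(\LL_{m,k}(t))|=\bigo(\iln_m(t)^{-\delta})$, and substituting $\tilde p(\LL_{m,k}(t))=p(\iln_j(t))$ yields \eqref{yAp} verbatim.

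I do not expect any genuine analytic obstacle here, since no new integral estimate is needed beyond what Lemma \ref{iterlog-odelem} already supplies. The only point requiring care is the bookkeeping: choosing $k=j-m$ so that the lone logarithm $\iln_j$ surfaces as the last entry of $\LL_{m,k}$, and confirming that a one-variable polynomial composed with $\iln_j$ is a bona fide member of $\classP(k,\R^n)$ through the coordinate-only dependence. Everything else is a direct invocation of the lemma.
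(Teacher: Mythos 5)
Your proposal is correct and follows exactly the paper's own argument: the paper likewise sets $k=j-m$, defines $\tilde p(z_1,\ldots,z_k)=p(z_k)$, observes $\tilde p\in\classP(k,\R^n)$ with $\tilde p(\LL_{m,k}(t))=p(\iln_j(t))$, and invokes Lemma \ref{iterlog-odelem}. Your version merely spells out the bookkeeping (e.g., $T_*>E_j(0)=E_{m+k}(0)$ and the verification of \eqref{gprop}) that the paper leaves implicit.
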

\begin{proof}
Let $k=j-m$, define function $\tilde p(z_1,z_2,\ldots,z_k)=p(z_k)$. Note that $\tilde p\in \classP(k,\R^n)$ and $\tilde p(\LL_{m,k}(t))=p(\iln_j(t))$.
Applying Lemma \ref{iterlog-odelem} with $\tilde p$ replacing $p$, we obtain \eqref{yAp} from \eqref{ypL}.
\end{proof}

\section{Basic existence result and large-time estimates}\label{odesec}

We establish basic facts for equation \eqref{sys-eq}.
First, we have the global existence result for small initial data and forcing function.

\begin{theorem}\label{thmsmall}
There are positive numbers $\varep_0$ and $\varep_1$ such that if $y_0\in\R^n$ with $|y_0|<\varep_0$, and 
$f\in C([0,\infty), \R^n)$ with 
\beq\label{fep} \|f\|_{\infty}:=\sup\{|f(t)|:t\in[0,\infty)\}<\varep_1,
\eeq 
then there exists a unique solution $y\in C^1([0,\infty),\R^n)$ of \eqref{sys-eq} on $[0,\infty)$ with $y(0)=y_0$. 
If, in addition, 
\beq\label{limf} 
\lim_{t\to\infty}f(t)=0,
\eeq  
then 
 \begin{equation}\label{limy}
  \lim_{t\to\infty} y(t)=0.
 \end{equation}
 \end{theorem}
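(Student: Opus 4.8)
The plan is to treat this as a standard small-data global existence result with asymptotic stability, driven entirely by the coercivity of $A$ (Assumption \ref{assumpA}) and the quadratic bound \eqref{Gyy} on the nonlinearity. I would begin with local theory: the right-hand side $y\mapsto -Ay+G(y)+f(t)$ is continuous in $t$ and, by Assumption \ref{assumpG}(i), locally Lipschitz in $y$, so Picard--Lindel\"of gives a unique maximal solution $y\in C^1([0,T_{\max}),\R^n)$ with $y(0)=y_0$. Everything then reduces to producing an a priori bound that both forces $T_{\max}=\infty$ and keeps $y$ small.

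To exploit the positivity of the eigenvalues, I would replace the Euclidean structure by the inner product adapted to the diagonalization \eqref{Adiag}, namely $\langle u,v\rangle_*=(Su)\cdot(Sv)$ with induced norm $|u|_*=|Su|$. Since $SA=A_0S$, one gets the coercivity $\langle Ay,y\rangle_*=\sum_i\Lambda_i(Sy)_i^2\ge\lambda_1|y|_*^2$. Writing $r(t)=|y(t)|_*$ and differentiating $\tfrac12 r^2$ (handling $r=0$ via the comparison principle for $V=\tfrac12r^2$), equation \eqref{sys-eq} together with Cauchy--Schwarz in $\langle\cdot,\cdot\rangle_*$ gives, as long as $r<\rho_*:=c_1 r_*$ so that \eqref{Gyy} applies (here $c_1|y|\le|y|_*\le c_2|y|$ are the norm-equivalence constants),
\[
r'(t)\le -\lambda_1\, r(t)+C_G\,r(t)^2+M,
\]
where $C_G$ comes from \eqref{Gyy} and the norm equivalence, and $M\le c_2\varep_1$ bounds $|f|_*$.

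The crux is the bootstrap argument that fixes $\varep_0,\varep_1$. I would set $\rho_0=\min\{\rho_*,\lambda_1/(2C_G)\}$; whenever $r<\rho_0$ one has $C_G r<\lambda_1/2$, so the inequality linearizes to $r'\le-\tfrac{\lambda_1}{2}r+M$, and comparison with the scalar linear ODE yields $r(t)\le\max\{r(0),2M/\lambda_1\}$. Choosing $\varep_0,\varep_1$ so small that $c_2\varep_0<\rho_0$ and $2c_2\varep_1/\lambda_1<\rho_0$ keeps this bound strictly below $\rho_0$; a standard continuity argument on $T^\ast=\sup\{t:r<\rho_0\text{ on }[0,t]\}$ then shows $r$ never reaches $\rho_0$, so the solution remains in a fixed bounded set, cannot blow up, and hence $T_{\max}=\infty$. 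I expect this to be the main obstacle, precisely because \eqref{Gyy} is only a local estimate: one must simultaneously keep the trajectory inside $|y|<r_*$ and calibrate the two smallness thresholds so that the linearized comparison closes.

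Finally, for the decay \eqref{limy} under \eqref{limf}, the bound already forces $C_G r(t)<\lambda_1/2$ for all $t$, so the differential inequality improves globally to
\[
r'(t)\le -\tfrac{\lambda_1}{2}\,r(t)+|f(t)|_*.
\]
Gronwall's inequality gives $r(t)\le r(0)e^{-\lambda_1 t/2}+\int_0^t e^{-\lambda_1(t-s)/2}|f(s)|_*\,ds$. Since $|f(s)|_*\to0$ by \eqref{limf}, splitting the convolution integral at a large time shows the right-hand side tends to $0$; hence $r(t)\to0$ and therefore $y(t)\to0$, which is \eqref{limy}.
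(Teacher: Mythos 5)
Your proposal is correct and follows essentially the same route as the paper: your adapted inner product $\langle u,v\rangle_*=(Su)\cdot(Sv)$ is precisely the paper's reduction to the diagonal case via the transformation $z=Sy$ in \eqref{trans}, and your bootstrap on $r=|y|_*$ with threshold $\rho_0=\min\{\rho_*,\lambda_1/(2C_G)\}$ mirrors the paper's contradiction argument on the maximal interval where $|y|<C_0=\min\{r_*,\lambda_1/(4c_*)\}$, both resting on the same coercivity plus the local quadratic bound \eqref{Gyy}. The only cosmetic difference is the final decay step, where you split the Duhamel convolution by hand while the paper applies \cite[Lemma 3.9]{HI2} to the differential inequality \eqref{d2} for $|y|^2$ --- the same estimate in packaged form.
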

\begin{proof}
(a) First, we consider the special case when $A=A_0$, where $A_0$ is in \eqref{Adiag}.
We have 
\beq\label{Ayy}
(Ay)\cdot y\ge \lambda_1 |y|^2 \quad\forall y\in\R^n.
\eeq

Let $r_*$ and $c_*$ be as in \eqref{Gyy}. Set
\beq\label{C0}
 C_0=  \min\Big\{r_*,\frac{\lambda_1}{4c_*}\Big\} , \quad 
\varep_0= \min\Big \{\frac{C_0}{2}, c_*\Big\},\quad \text{and}\quad
\varep_1=\frac{\lambda_1 C_0}{2\sqrt{2}}.
\eeq

Suppose $|y_0|<\varep_0$ and $\|f\|_\infty<\varep_1$.
Note that $|y_0|<C_0$.
By the the local existence and uniqueness theory, see e.g. \cite{CL55,JHale78}, there exists a maximal $T\in(0,\infty]$ such  that there is a unique solution $y(t)$ on $[0,T)$ that satisfies  
\beq \label{ysmall}
|y(t)|<C_0 \text{ on $[0,T)$.}
\eeq 

We claim $T=\infty$.
Suppose this not true, then, by \eqref{ysmall}, the local existence result and the maximality of $T$, the solution $y(t)$ exists on $[0,T')$ for some $T'>T$ and we have
\beq\label{yt0}
|y(T)|=C_0.
\eeq

Let $\varep>0$ be arbitrary. Taking the dot product of \eqref{sys-eq} with $y(t)$, then using property \eqref{Ayy}, 
Cauchy-Schwarz's inequality, and estimate \eqref{Gyy}, we obtain, for $t\in(0,T)$,
\beqs
\frac12\ddt|y|^2 
= - Ay\cdot y +G(y)\cdot y +f\cdot y
\le -\lambda_1 |y|^2 + c_* |y|^{2}|y| + |f|\cdot |y|.
\eeqs
Applying Cauchy's inequality to the last product yields
\beq\label{d0}
\frac12\ddt|y|^2 
\le -(\lambda_1  - c_* |y| - \varepsilon ) |y|^2+\frac{|f|^2}{4\varepsilon}  \text{ on }(0,T).
\eeq

Taking $\varep=\lambda_1/2$ in \eqref{d0}, and utilizing estimate \eqref{ysmall} for the first $|y|$ on its right-hand side, for $t\in(0,T)$, we have that
\beqs
\frac d{dt}|y|^2 
\le -(\lambda_1-2 c_* C_0)|y|^2 + \frac{|f|^2}{\lambda_1}  
\text{ on }(0,T).
\eeqs

Set $\alpha_0=\lambda_1/2>0$.
With $C_0$ defined in \eqref{C0}, we have $\lambda_1-2 c_* C_0 \ge \alpha_0$, and, hence, 
\beq\label{d2}
\frac d{dt}|y|^2 
\le -\alpha_0 |y|^2 +  \frac{|f|^2}{\lambda_1} \text{ on }(0,T).
\eeq

By Gronwall's inequality and assumption \eqref{fep}, we have for  $t\in[0,T)$ that 
\beqs
|y(t)|^2 \le |y_0|^2 e^{-\alpha_0 t} +\frac{ \varep_1^2}{\lambda_1 \alpha_0}\le \varep_0^2  +\frac{\varep_1^2}{\lambda_1 \alpha_0} \le \frac{C_0^2}2.
\eeqs 

Letting $t\to T^-$, we obtain 
$y(T)\le C_0/{\sqrt{2}}$, which contradicts \eqref{yt0}.
Therefore $T=\infty$.

\medskip
Now, assume \eqref{limf}. We have \eqref{d2} holds for all $t>0$.
Applying \cite[Lemma 3.9]{HI2} to \eqref{d2} gives
\beqs
0\le \limsup_{t\to\infty} |y(t)|^2\le \limsup_{t\to\infty} \frac{|f(t)|^2}{\alpha_0 \lambda_1}=0.
\eeqs
This proves \eqref{limy}.

(b) Consider the general case of $A$ as in \eqref{Adiag}.
We make use of the transformations 
 \beq\label{trans}
z=Sy,\quad  \tilde G(z)=SG(S^{-1}z),\quad \tilde f(t)=Sf(t).
 \eeq

Then equation \eqref{sys-eq} is equivalent to 
 \beq\label{zeq}
 z'=-A_0 z + \tilde G(z) + \tilde f(t).
 \eeq

There exists a constant $c\ge 1$ such that 
 \beq\label{zf}
c^{-1}|y|\le |z|\le c|y|\text{ and } c^{-1}|f|\le |\tilde f|\le c |f|.
 \eeq
 
Thanks to the equivalent norms in \eqref{zf}, $\tilde G$ and $\tilde f$ have similar properties to $G$ and $f$, respectively.
By  part (a) applied to equation \eqref{zeq}, we obtain the results for $z(t)$ and $\tilde f(t)$.
Then, thanks to relation \eqref{zf} again,  the results for $y(t)$ and $f(t)$ follow. We omit the details.
\end{proof}

There are examples of $G(y)$ such that the global solution $y(t)$ exists even when $|y_0|$ is not small.
Theorem \ref{thmsmall} only guarantees that the set of global solutions of our interest is not empty.
Certainly, not all solutions decay to zero. For example, even when $f=0$, the system \eqref{sys-eq} may have a non-zero steady state.

When $f(t)$ has more specific decay than \eqref{limf}, then we can obtain corresponding large-time estimates for $y(t)$.

\begin{theorem}\label{thmdecay}
 Let $\psi(t)=e^t$ or $\psi(t)=\iln_k (t) $ for some integer $k\ge 0$.
 Assume there is $T\ge 0$ such that $f\in C((T,\infty))$ and 
 \beq\label{fbound} 
 |f(t)|=\bigo(\psi(t)^{-\alpha})\text{ for some }\alpha >0.
 \eeq 
 
 Let $y(t)$ be a solution of \eqref{sys-eq} on $(T,\infty)$ and satisfy 
 \beq\label{limyy}
 \liminf_{t\to\infty}|y(t)|<\varep_0,
 \eeq  
 where $\varep_0$ is as in Theorem \ref{thmsmall}.
 Then
\beq\label{larg-est}
y(t)=
\begin{cases}
 \bigo(\psi(t)^{-\min \{\lambda_1,\alpha \}}),    &\text{ if } \psi(t)=e^t,\ \alpha\ne \lambda_1,\\
  \bigo(t\psi(t)^{-\alpha}),  &\text{ if } \psi(t)=e^t,\ \alpha = \lambda_1,\\
 \bigo(\psi(t)^{-\alpha}), &\text{ if } \psi(t)=\iln_k(t).
\end{cases}
\eeq 

 \end{theorem}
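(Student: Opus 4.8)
The plan is to first force the solution into the small‑data regime of Theorem \ref{thmsmall}, then, after diagonalizing, to treat the exponential and iterated‑logarithmic cases separately: an energy inequality supplies a preliminary decay rate, and the convolution estimates \eqref{ee} and Lemma \ref{plnlem} upgrade it to the sharp rate.

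First I would exploit that \eqref{fbound} forces $f(t)\to 0$, so $\sup_{t\ge s}|f(t)|\to 0$ as $s\to\infty$. Hence I can pick $t_0$ large with $\sup_{t\ge t_0}|f(t)|<\varep_1$ and, using \eqref{limyy}, also arrange $|y(t_0)|<\varep_0$. Theorem \ref{thmsmall}, applied on $[t_0,\infty)$ after a time shift, together with uniqueness of solutions, then identifies $y$ on $[t_0,\infty)$ with the small global solution, so that $|y(t)|<C_0\le r_*$ for all $t\ge t_0$ and $y(t)\to 0$. Passing to $z=Sy$ via \eqref{trans} turns \eqref{sys-eq} into the diagonal system \eqref{zeq}, and by the norm equivalence \eqref{zf} it suffices to prove the estimates for $z$; moreover $\tilde G$ inherits a quadratic bound $|\tilde G(z)|\le \tilde c_*|z|^2$ for small $z$.

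Next I would derive an energy inequality. Taking the dot product of \eqref{zeq} with $z$, using \eqref{Ayy}, the quadratic bound on $\tilde G$, absorbing $\tilde c_*|z|^3\le \eta|z|^2$ (valid since $z\to 0$), and applying Young's inequality to $\tilde f\cdot z$, I obtain $\frac{d}{dt}|z|^2\le -\beta|z|^2+C|\tilde f|^2$ on $[t_0,\infty)$, with $\beta$ taken arbitrarily close to $2\lambda_1$. In the logarithmic case $\psi=\iln_k$ this already suffices: applying the variation‑of‑constants formula to $|z|^2$ and invoking Lemma \ref{plnlem} with $m=k$ and $\lambda=2\alpha$, the convolution of $e^{-\beta(t-\tau)}$ against $|\tilde f|^2=\bigo(\iln_k^{-2\alpha})$ is again $\bigo(\iln_k^{-2\alpha})$, while the homogeneous term $e^{-\beta t}$ is negligible against any power of $\iln_k$. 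Thus $|z(t)|=\bigo(\iln_k(t)^{-\alpha})$, which is the claim (the case $k=0$ being the pure power $\psi=t$).

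For the exponential case $\psi=e^t$ the energy inequality only yields a preliminary rate $|z(t)|=\bigo(e^{-\nu t})$ with $\nu$ as close to $\min\{\lambda_1,\alpha\}$ as desired, obtained by combining the homogeneous decay $e^{-\beta t}$ with \eqref{ee} applied to $|\tilde f|^2=\bigo(e^{-2\alpha\tau})$. The main obstacle, and the reason the energy method alone is insufficient, is that it produces neither the sharp exponent $\lambda_1$ (only $\lambda_1-\eta$) nor the resonant factor $t$ at $\alpha=\lambda_1$. I would resolve this by a single bootstrap: writing the componentwise variation‑of‑constants formula for $z$, whose eigenvalues satisfy $\Lambda_i\ge\lambda_1$, and treating $\tilde G(z)+\tilde f$ as the forcing, the nonlinear part is now $\bigo(e^{-2\nu\tau})$ with $2\nu>\min\{\lambda_1,\alpha\}$. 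Consequently, after convolution through \eqref{ee}, it contributes no term slower than $\bigo(e^{-\min\{\lambda_1,\alpha\}t})$ and, crucially, no resonant $t$‑factor, so the leading behavior is governed entirely by the $\tilde f$‑term together with the components with $\Lambda_i=\lambda_1$. Applying \eqref{ee} to these then gives exactly $\bigo(e^{-\min\{\lambda_1,\alpha\}t})$ when $\alpha\ne\lambda_1$ and $\bigo(te^{-\lambda_1 t})$ in the resonant case $\alpha=\lambda_1$, the $t$‑factor coming from the equality branch of \eqref{ee}. Transforming back via \eqref{zf} yields \eqref{larg-est}.
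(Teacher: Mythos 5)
Your proposal is correct and follows essentially the same route as the paper's proof: reduce to the small-data regime of Theorem \ref{thmsmall}, diagonalize via \eqref{trans}, run the energy inequality with Gronwall, close the iterated-logarithmic case directly with Lemma \ref{plnlem}, and in the exponential case upgrade the preliminary rate $e^{-\nu t}$ by a Duhamel bootstrap through \eqref{ee}. The only micro-detail worth making explicit is the choice of $\nu$ so that $2\nu\ne\lambda_1$ (the paper picks $\delta<\mu/4$ with $2(\mu-\delta)\ne\lambda_1$), which justifies your claim that the quadratic term contributes no resonant $t$-factor.
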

\begin{proof}
Similar to the proof of Theorem \ref{thmsmall}, by using the transformations in \eqref{trans} and property \eqref{zf}, we can assume $A$ takes the form \eqref{Adiag}. Note in this case that 
\beq\label{etA} 
\|e^{-tA}\|= e^{-\lambda_1 t}\text{ for all $t\ge 0$.}
\eeq

Let $\varep_1$ be as in Theorem \ref{thmsmall}. By \eqref{limyy} and \eqref{fbound}, there exists $t_*>T$ such that
\beqs
|y(t_*)|<\varep_0\text{ and } \sup\{|f(t)|:t\ge t_*\}<\varep_1.
\eeqs

Note also that $f$ satisfies \eqref{limf}. Applying Theorem \ref{thmsmall} yields \eqref{limy}.

Denote $\mu=\min\{\lambda_1,\alpha \}$. Let $c_*$ be as in \eqref{Gyy}, and $\delta$ be an arbitrary positive number  with $\delta<\min\{2\varep_0 c_*,\mu\}$. 
By assumption \eqref{fbound} and the proved fact \eqref{limy}, there exist $t_0>t_*$ and $C>0$ such that $y\in C^1([t_0,\infty),\R^n)$ and
\beq\label{yft} |y(t)|\le \frac\delta{2c_*}<\varep_0, \quad |f(t)|\le C\psi(t)^{-\alpha}<\varep_1 \text{ for all $t \ge t_0$.} 
\eeq 

Then \eqref{d0}, with the choice $\varep=\delta/2$, and the estimates in \eqref{yft} yield 
\begin{align*}
\ddt|y(t_0+t)|^2 
&\le -2(\lambda_1 - \delta/2-\delta/2 ) |y(t_0+t)|^2+ C^2\delta^{-1}  \psi(t_0+t)^{-2\alpha} \\
&= -2(\mu - \delta ) |y(t_0+t)|^2+ C^2\delta^{-1}  \psi(t_0+t)^{-2\alpha}
\end{align*}
for all $t>0$. By Gronwall's inequality, we have, for $t>0$,
\begin{align*}
|y(t_0+t)|^2 &\le |y(t_0)|^2e^{-2(\mu -\delta )t} + C^2\delta^{-1} \int_0^t e^{-2(\mu -\delta )(t-\tau)} \psi(t_0+\tau)^{-2\alpha} d\tau.
\end{align*}

Estimating  the last integral, either by using \eqref{ee} in case $\psi(t)=e^t$, noticing that $\alpha>\mu-\delta$, or 
by applying Lemma \ref{plnlem} in case $\psi(t)=\iln_k(t)$, we obtain 
\beq\label{yshift}
|y(t_0+t)|^2 
=\begin{cases}
 \bigo(\psi(t_0+t)^{- 2(\mu -\delta)}) &  \text{ if } \psi(t)=e^t,\\
 \bigo(\psi(t_0+t)^{-2\alpha})& \text{ if } \psi(t)=\iln_k(t).
 \end{cases}
\eeq

Then the last estimate in \eqref{larg-est} follows from the second estimate in \eqref{yshift}.

Consider $\psi(t)=e^t$ now. Choose, additionally, $\delta<\mu/4$ such that $2(\mu-\delta)\ne \lambda_1$.

For any $T_0\ge t_0$, and $t\ge 0$, by variation of constant formula,
\beqs
y(T_0+t)=e^{-tA}y(T_0)+\int_0^t e^{-(t-\tau)A}\{G(y(T_0+\tau))+f(T_0+\tau)\}d\tau.
\eeqs

Selecting $T_0$ sufficiently large, using \eqref{etA}, \eqref{Gyy}, the first estimate in \eqref{yshift}, and \eqref{fbound}, we obtain
\beqs
|y(T_0+t)|\le e^{-\lambda_1 t}|y(T_0)|+ C'\int_0^t e^{-\lambda_1(t-\tau)}(e^{-2(\mu-\delta)\tau}+e^{-\alpha \tau})d\tau
\eeqs
for some $C'>0$. Note, by using \eqref{ee}, that
\beqs
\int_0^t e^{-\lambda_1(t-\tau)}e^{-2(\mu-\delta)\tau}d\tau
=\bigo(e^{-\min\{\lambda_1,2(\mu-\delta)\}t})
=\bigo(e^{-\min\{\lambda_1,3\mu/2\}t})
=\bigo(e^{-\mu t}).
\eeqs

Then estimating $\int_0^t e^{-\lambda_1(t-\tau)} e^{-\alpha \tau}d\tau$ by using \eqref{ee} again, we obtain 
\beqs
y(T_0+t)=
\begin{cases}
 \bigo(e^{-\mu t}),    &\text{ if } \alpha\ne \lambda_1,\\
  \bigo(t e^{-\mu t}),  &\text{ if } \alpha=\lambda_1.
  \end{cases}
\eeqs 

By shifting time $T_0+t$ back to $t$, we obtain the first two estimates in \eqref{larg-est}.
\end{proof}

\section{Expansions with one secondary base function}\label{basetwo}

In this section, we study the asymptotic expansions of the form \eqref{expan} in Definition \ref{psi-phi}. They are expressed in terms of one primary base function $\psi$ and one secondary base function $\phi$.

We explain Definition \ref{psi-phi} further now.

\begin{enumerate}[label=(\alph*)]
\item The first limit in \eqref{phs1} obviously follows the second one and relation \eqref{compcond}. Nonetheless, we stated both limits in \eqref{phs1} to make  a clear presentation.

\item It follows \eqref{phs1} and \eqref{compcond} that
 \beq\label{ratiolim}
 \lim_{t\to\infty} \frac{\phi(t)^\lambda}{\psi(t)^\alpha}=0\text{ for all } \lambda\in \R,\ \alpha>0.
 \eeq
 
\item  If $g(t)=\sum_{k=1}^N p_k(\phi(t)) \psi(t)^{-\gamma_k}$, then clearly the expansion \eqref{finex} holds.

 \item Consider \eqref{finex}. Suppose $(\gamma_k)_{k=1}^N$ can be extended as to a sequence $(\gamma_k)_{k=1}^\infty$ as in (i). Set $p_k=0$ for $k>N$.
 Then, thanks to \eqref{allam}, we obtain \eqref{expan}. This means the finite expansion \eqref{finex} is a special case of \eqref{expan}. Such an extension is, of course, not unique, and some may be more appropriate than the others, see, e.g., Scenarios \ref{scen2} and \ref{scen3} below.
 
 \item Another type of asymptotic expansions in \cite[Definition 4.1]{CaH2} is
 \beq\label{gold}
 g(t)\sim \sum_{k=1}^\infty \xi_k \psi_k(t),
 \eeq 
where $\xi_k$'s are constant vectors, and $(\psi_k)_{k=1}^\infty$ is a general system of decaying functions.

On the one hand, $p_k(\phi(t))$ in \eqref{expan} is more general than $\xi_k$ in \eqref{gold}.
On the other hand, the $\psi_k$ in \eqref{gold} is more general than $\psi^{-\gamma_k}$ in \eqref{expan}.
\end{enumerate}

It turns out that the asymptotic expansion \eqref{expan}, for any given function $g(t)$, is unique.
This is a direct consequence of the following lemma.

\begin{lemma}\label{ue}
Let the normed space $(X,\|\cdot\|_X)$ and  functions $\psi$, $\phi$ be as in Definition \ref{psi-phi}. Given a function $g:(T',\infty)\to X$ for some $T'\in\R$. Let $0\le \gamma_1<\ldots<\gamma_N$ for some $N\in \N$.
 Suppose $p_1,\ldots,p_N:\R\to X$ are polynomials such that  \eqref{gdrem} holds for some $\mu>\gamma_N$.
Then such polynomials $p_1,\ldots,p_N$ are unique.
\end{lemma}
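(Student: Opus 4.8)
The plan is to reduce the uniqueness to a vanishing statement and then to peel off the terms of the expansion one at a time, beginning with the slowest-decaying one, each peeling step being powered by the comparison \eqref{ratiolim} between powers of $\phi$ and powers of $\psi$.

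First I would take differences. Suppose $p_1,\ldots,p_N$ and $\tilde p_1,\ldots,\tilde p_N$ both realize \eqref{gdrem}, with exponents $\mu>\gamma_N$ and $\tilde\mu>\gamma_N$ respectively. Setting $q_k=p_k-\tilde p_k$ and subtracting the two estimates, we obtain, with $\nu:=\min\{\mu,\tilde\mu\}>\gamma_N$,
\[
\Big\|\sum_{k=1}^N q_k(\phi(t))\,\psi(t)^{-\gamma_k}\Big\|_X=\bigo(\psi(t)^{-\nu}).
\]
Since each $q_k$ is an $X$-valued polynomial, say $q_k(s)=\sum_j a_j^{(k)}s^j$, it suffices to prove that every $q_k\equiv 0$.

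Next I would argue by induction on $i$ that $q_i\equiv 0$, under the hypothesis $q_1=\cdots=q_{i-1}=0$. With that hypothesis the display above reads $\|\sum_{k=i}^N q_k(\phi(t))\psi(t)^{-\gamma_k}\|_X=\bigo(\psi(t)^{-\nu})$. Multiplying through by $\psi(t)^{\gamma_i}$, which is legitimate since $\psi(t)>0$ for large $t$ by \eqref{phs1}, gives
\[
\Big\|q_i(\phi(t))+\sum_{k=i+1}^N q_k(\phi(t))\,\psi(t)^{\gamma_i-\gamma_k}\Big\|_X=\bigo(\psi(t)^{\gamma_i-\nu}).
\]
The right-hand side tends to $0$ because $\nu>\gamma_N\ge\gamma_i$. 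For each $k\ge i+1$ we have $\gamma_k-\gamma_i>0$, while $q_k(\phi(t))$ grows at most polynomially in $\phi(t)$ (indeed $\|q_k(\phi(t))\|_X\le C_k\phi(t)^{\deg q_k}$ once $\phi(t)\ge 1$, using $\phi(t)\to\infty$); hence by \eqref{ratiolim} each summand satisfies $\|q_k(\phi(t))\|_X\,\psi(t)^{\gamma_i-\gamma_k}\to 0$. Consequently $\|q_i(\phi(t))\|_X\to 0$ as $t\to\infty$. Finally, since $\phi(t)\to\infty$ by \eqref{phs1}, and a nonzero $X$-valued polynomial $q_i$ has $\|q_i(s)\|_X\to\|a_0^{(i)}\|_X>0$ when it is a nonzero constant, or $\|q_i(s)\|_X\to\infty$ when it has positive degree, the limit $\|q_i(\phi(t))\|_X\to 0$ forces $q_i\equiv 0$. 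This closes the induction and yields $p_k=\tilde p_k$ for every $k$.

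The genuinely routine parts are the big-$\bigo$ bookkeeping and the polynomial growth bound on $q_k(\phi(t))$. The step deserving care — and the crux of the whole argument — is the application of \eqref{ratiolim} to show that every term with a strictly larger power $\gamma_k$ becomes negligible after rescaling by $\psi^{\gamma_i}$; this is precisely where the defining comparison condition \eqref{compcond} between the primary base function $\psi$ and the secondary base function $\phi$ is indispensable, and it is exactly what allows the leading polynomial factor $q_i(\phi(t))$ to be isolated cleanly.
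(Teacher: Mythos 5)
Your proposal is correct and follows essentially the same route as the paper: form the differences $h_k=p_k-\hat p_k$, obtain $\bigl\|\sum_k h_k(\phi(t))\psi(t)^{-\gamma_k}\bigr\|_X=\bigo(\psi(t)^{-\nu})$ with $\nu>\gamma_N$ by the triangle inequality, multiply by $\psi(t)^{\gamma_i}$, kill the higher-order terms via \eqref{ratiolim}, and conclude $h_i\equiv 0$ from $\phi(t)\to\infty$ and the fact that a nonzero polynomial cannot vanish at infinity. Your write-up merely makes explicit the induction and the polynomial-growth bound that the paper compresses into ``repeating this argument.''
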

\begin{proof}
Suppose $\hat p_k$'s are $X$-valued polynomials, for $1\le k\le N$, such that  
\beqs 
\Big\|g(t)-\sum_{k=1}^N \hat p_k(\phi(t))\psi(t)^{-\gamma_k}\Big\|_X=\bigo(\psi(t)^{-\hat\mu})\quad\text{ for some }\hat\mu>\gamma_N.
\eeqs 

 For each $k$, let $h_k=p_k-\hat p_k$, which is an $X$-valued polynomial. By the triangle inequality, we have
\beq\label{tri}
\Big\|\sum_{k=1}^N h_k(\phi(t))\psi(t)^{-\gamma_k}\Big\|_X
\le \Big\|\sum_{k=1}^N p_k(\phi(t))\psi(t)^{-\gamma_k}-g(t)\Big\|_X +\Big\|g(t)-\sum_{k=1}^N \hat p_k(\phi(t))\psi(t)^{-\gamma_k}\Big\|_X,
\eeq
hence
\beqs 
\Big\|\sum_{k=1}^N h_k(\phi(t))\psi(t)^{-\gamma_k}\Big\|_X=\bigo(\psi(t)^{-\mu_*}),\quad\text{ where }\mu_*=\min\{\mu,\hat\mu\}>\gamma_N.
\eeqs

Multiplying this equation by $\psi(t)^{\gamma_1}$ and making use of the property \eqref{ratiolim}, we have 
\beq\label{h1}
\lim_{t\to\infty} h_1(\phi(t))=0.
\eeq

Because $h_1$ is a polynomial, together with \eqref{h1} and the fact $\phi(t)\to\infty$ as $t\to\infty$, we deduce $h_1=0$.
Repeating this argument for the remaining $h_k$'s, we obtain $h_k=0$ for all $k=1,\ldots,N$.
\end{proof}

As a side note, the power $\gamma_1$ in Lemma \ref{ue} is in $[0,\infty)$, which is more general than the positive $\gamma_1$ in expansion \eqref{expan}. This small alteration  aims to provide a short argument at the beginning of the proofs of Lemmas \ref{polylem} and \ref{exp-odelem}.

Of course, there are many choices of $(\psi,\phi)$. We will develop our theory for the following three typical cases.

\begin{definition}\label{typedef} 
We define three types of pair of functions $(\psi,\phi)$.
 
Type 1: $(\psi,\phi)=(e^t,t)$, 

Type 2: $(\psi,\phi)=(t,\ln t)$, and 

Type 3: $(\psi,\phi)=(\iln_{m_*}(t),\iln_{n_*}(t))$, with $n_* > m_* \ge 1$.
\end{definition}

Clearly, the functions $(\psi,\phi)$ in Definition \ref{typedef} satisfy conditions \eqref{phs1} and \eqref{compcond} in Definition \ref{psi-phi}.
For the rest of this section, $(\psi,\phi)$ is one of the three types in Definition  \ref{typedef}.

We note that
\beq\label{dev}
(\psi'(t),\phi'(t))=\begin{cases}
               (\psi(t),1),&\text{ for Type 1},\\
               (1,\psi(t)^{-1}),&\text{ for Type 2},\\
               (\bigo(\psi(t)^{-\alpha}),\bigo(\psi(t)^{-\beta})) \quad\forall \alpha,\beta>0, &\text{ for Type 3}.
              \end{cases}
\eeq

We now focus on the differential equation of our interest -- equation \eqref{sys-eq}. We need a basic requirement on its forcing function $f(t)$.

\begin{assumption}\label{assumpf}
There exists a number $T_f\ge 0$ such that $f$ is continuous on $[T_f,\infty)$.
\end{assumption}

More specific conditions on $f$ will be specified later for each result.  

\begin{definition}\label{mset}
Let $S$ be a subset of $\R$. 

We say $S$ preserves the addition if $x+y\in S$ for all $x,y\in S$.

We say $S$ preserves the unit increment if $x+1\in S$ for all $x\in S$.
\end{definition}

The main assumption on $f$ for this section is the following.

\begin{assumption} \label{muSas}
The function $f(t)$ admits the asymptotic expansion, in the sense of Definition \ref{psi-phi} with $X=\R^n$ and $\|\cdot\|_X=|\cdot|$,
\beq\label{fmu}
f(t) \sim \sum_{k=1}^\infty p_k(\phi(t)) \psi(t)^{-\mu_k},
\eeq
where $p_k$'s are $\R^n$-valued polynomials, $(\mu_k)_{k=1}^\infty$ is a divergent, strictly increasing sequence of positive numbers.
Moreover, the set $\mathcal S:=\{\mu_k:k\in\N\}$ satisfies
\begin{enumerate}[label=\rnum]
 \item\label{Aa}  $\mathcal S$ preserves the addition. 

 \item\label{Ab} In case of Type 1, $\mathcal S$ contains all eigenvalues of $A$.

 \item\label{Ac} In case of Type 2, $\mathcal S$ preserves the unit increment.
\end{enumerate}
\end{assumption}

Note from (b) of Assumption \ref{muSas} that, in case of Type 1, one has
\beq\label{ml1}
\mu_1\le \lambda_1.
\eeq

Below, we discuss typical scenarios when Assumption \ref{muSas} holds.

\begin{scenario}\label{scen1} The forcing function $f(t)$ in \eqref{sys-eq} has the following expansion, in the sense of Definition \ref{psi-phi},
\beq \label{f-exp}
f(t) \sim \sum_{k=1}^\infty \hat p_k(\phi(t)) \psi(t)^{-\alpha_k} \text{ in }\R^n,
\eeq
where $\hat p_k$'s are polynomials from $\R$ to $\R^n$, and
$(\alpha_k)_{k=1}^\infty$ is a divergent, strictly increasing sequence of  positive numbers.

For Type 1, let $\mathcal S$ be the joint additive semigroup generated by both $\lambda_j$'s and $\alpha_j$'s, i.e.,
\beq\label{S1}
\mathcal S=\left \{ \sum_{j=1}^k \lambda_{s_j}+ \sum_{j=1}^m \alpha_{\ell_j}:  k,m\in \Z_+,\ k^2+m^2>0,
\ 1\le s_j\le d,\ \ell_j\in \N \right \}.
\eeq

For Type 2, let $\mathcal S$ be defined by
\beq\label{S2}
\mathcal S=\left \{ k+\sum_{j=1}^m \alpha_{\ell_j}: k\in\Z_+,\ m\in\N,\ \ell_j\in \N \right \}.
\eeq

For Type 3, let $\mathcal S$ be the additive semigroup generated by $\alpha_k$'s, i.e.,
\beq\label{S3}
\mathcal S=\left \{ \sum_{j=1}^m \alpha_{\ell_j}: m\in\N,\ \ell_j\in \N \right \}.
\eeq

Re-arrange the set $\mathcal S$ as a sequence 
\beq\label{Sseq} \mathcal S=(\mu_k)_{k=1}^\infty \text{ of strictly increasing positive numbers.}
\eeq

Then $(\mu_k)_{k=1}^\infty$ and $\mathcal S$ satisfy the properties in Assumption \ref{muSas}.
Note that the set $\mathcal S$ in \eqref{S1}, \eqref{S2}, \eqref{S3}  contains the sequence $(\alpha_k)_{k=1}^\infty$. 
Therefore, we can rewrite expansion \eqref{f-exp}, after re-indexing $\hat p_k$'s,  as the expansion \eqref{fmu}.
For example, the first term in expansion \eqref{fmu} is identified by 
\beqs
\mu_1=\begin{cases}
       \min\{\lambda_1,\alpha_1\},&\text{ for Type 1},\\
       \alpha_1,&\text{ for Types 2 and 3},
      \end{cases}
\eeqs 
\beqs 
      p_1=\begin{cases}
       0, &\text{ for Type 1 with $\lambda_1<\alpha_1$},\\
       \hat p_1,&\text{ for Type 1 with $\lambda_1\ge \alpha_1$, and for Types 2 and 3}.
      \end{cases}
\eeqs
\end{scenario}

\begin{scenario}\label{scen2}
In case of finite approximation, as in Definition \ref{psi-phi}(ii),
\beq\label{ffin}
f(t) \sim \sum_{k=1}^N \hat p_k(\phi(t)) \psi(t)^{-\alpha_k} \text{ in }\R^n,
\eeq
then, corresponding to Type 1, 2, 3, the set $\mathcal S$ is defined by formulas \eqref{S1}, \eqref{S2}, \eqref{S3} with restriction $1\le \ell_j\le N$. This set $\mathcal S$ is still infinite, can be arranged as sequence $(\mu_k)_{k=1}^\infty$ as in Scenario \ref{scen1}, and contains  $(\alpha_k)_{k=1}^N$. Hence, again, we can rewrite \eqref{ffin} as \eqref{fmu}.
\end{scenario}

\begin{scenario}\label{scen3}
Function $f$ decays faster than any exponential functions, i.e., $e^{\alpha t} f(t)\to 0$ as $t\to\infty$ for any $\alpha>0$. (This includes the case $f=0$.) Then we only consider Type 1, and let $\mathcal S$ be the semigroup generated by the spectrum of $A$, i.e.,
\beqs
\mathcal S=\left \{ \sum_{j=1}^k \lambda_{s_j}:  k\in\N,\ 1\le s_j\le d \right \}.
\eeqs
Same as in Scenarios \ref{scen1} and \ref{scen2}, we can write \eqref{fmu} with $p_k=0$ for all $k\in\N$.
\end{scenario}

Regarding expansion \eqref{fmu}, denote $\tilde p_k(t)=p_k(\phi(t))$, and  
\beq \label{fkdef}
f_k(t)=\tilde p_k(t) \psi(t)^{-\mu_k }\quad\text{and}\quad
\bar f_N(t)=\sum_{k=1}^N f_k(t).
\eeq 
Then, according to Definition \ref{psi-phi}, for any $N\in\N$, one has
\beq \label{fN}
|f(t)-\bar f_N(t)|
=\Big|f(t)-\sum_{k=1}^N f_k(t)\Big|=\bigo(\psi(t)^{-\mu_{N}-\varep_N})\text{ for some } \varep_N>0.
\eeq

 The type of solutions of equation \eqref{sys-eq} that will be the subject of our analysis is the following. 

\begin{assumption}\label{assumpu}
There exists a number $T_0\ge 0$ such that $y\in C^1((T_0,\infty))$ is a solution of \eqref{sys-eq} on $(T_0,\infty)$,  and $y(t)\to 0$ as $t\to\infty$.
\end{assumption}

Our first main result on the asymptotic expansion for solutions of \eqref{sys-eq} is the next theorem.
In this theorem, we denote
\beqs
\epsilon=
 \begin{cases}
  1, \quad \text{ for Type 1,}\\
 0, \quad \text{ for Types 2 and 3.}
 \end{cases}
 \eeqs

 \begin{theorem}\label{mainthm}
Let Assumptions \ref{assumpf} and \ref{muSas} hold. Let $y(t)$ be a solution of \eqref{sys-eq} as in Assumption \ref{assumpu}.
  Then there exist unique polynomials $q_k:\R\to \R^n$, for $k\in\N$, such that $y(t)$ admits the expansion, in the sense of Definition \ref{psi-phi},
  \beq \label{ymu}
  y(t)\sim \sum_{k=1}^\infty q_k(\phi(t)) \psi(t)^{-\mu_k}.
  \eeq 
 Moreover,  the polynomials $q_k$'s solve, on $\R$, the following equations:
\begin{align} \label{pq1}    
Aq_1 + \epsilon (q_1'-\mu_1 q_1)  &= p_1,\\
\intertext{and, for $k \ge 2$,}
\label{pq-k}    
\begin{split}
 Aq_k + \epsilon (q_k'-\mu_k q_k)  
 &=\sum_{m\ge 2}\ \sum_{\mu_{j_{1}}+\mu_{j_{2}}+\ldots \mu_{j_{m}}=\mu_k} \mathcal G_m(  q_{j_{1}}, q_{j_{2}},\ldots ,  q_{j_{m}}) \\
&\quad +p_k + \chi_k, 
\end{split}
\end{align}
 where $\chi_k:\R\to\R^n$ is a polynomial defined, in cases of Types 1 and 3, by $\chi_k=0$, and,
in case of Type 2, by
\beq\label{chik}
 \chi_k=
 \begin{cases}
\mu_\lambda q_\lambda-q'_\lambda,&\text{if there exists } \lambda \in [1,k-1] \text{ such that } \mu_\lambda +1 = \mu_k, \\
 0,&\text{otherwise}.
 \end{cases}
 \eeq 
 \end{theorem}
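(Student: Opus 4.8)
The plan is to prove expansion \eqref{ymu} by induction on $N$, establishing at each step that
$$y(t) \sim \sum_{k=1}^N q_k(\phi(t))\psi(t)^{-\mu_k}$$
in the finite sense of Definition \ref{psi-phi}(ii), where the $q_k$'s satisfy \eqref{pq1}--\eqref{pq-k}. The base case $N=1$ already sits essentially in Theorem \ref{thmdecay}: that result gives a first decay rate $\psi(t)^{-\mu_1}$ for $y(t)$, which identifies the leading exponent and, after substituting the ansatz into \eqref{sys-eq}, forces the leading coefficient equation \eqref{pq1}.

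The core of the argument is the inductive step. **First I would** assume the approximation holds through order $N-1$, so that $y(t)=\sum_{k=1}^{N-1}q_k(\phi(t))\psi(t)^{-\mu_k}+v(t)$ with a remainder $v$ decaying like $\psi(t)^{-\mu_{N-1}-\varepsilon}$. **Then I would** substitute this into \eqref{sys-eq} and collect terms. The nonlinearity $G(y)$ is handled via its expansion \eqref{Gex}: writing $G(y)=\sum_{m\ge 2}\mathcal G_m(y,\dots,y)+(\text{fast remainder})$, the multilinearity of each $\mathcal G_m$ lets me expand $\mathcal G_m(\sum q_{j}\psi^{-\mu_{j}},\dots)$ and collect, by property (a) of Assumption \ref{muSas} (that $\mathcal S$ preserves addition), exactly the resonant products $\mathcal G_m(q_{j_1},\dots,q_{j_m})$ whose combined exponent $\mu_{j_1}+\dots+\mu_{j_m}$ lands on a new level $\mu_N\in\mathcal S$. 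Matching the coefficient of $\psi(t)^{-\mu_N}$ produces precisely the right-hand side of \eqref{pq-k}, together with the inhomogeneity $p_N$ from the force.

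The subtle mechanism is how the differential operator $\ddt+A$ acts on a single mode $q(\phi(t))\psi(t)^{-\mu}$, and this is where the three types diverge and why $\epsilon$ and $\chi_k$ appear. **The key computation** is to differentiate $q(\phi(t))\psi(t)^{-\mu}$ using \eqref{dev}. For Type 1, $(\psi',\phi')=(\psi,1)$, so the derivative contributes the clean operator $q'-\mu q$ at the same exponent level, giving $\epsilon=1$ with no cross-level terms. For Type 2, $(\psi',\phi')=(1,\psi^{-1})$, so $\ddt(\psi^{-\mu})$ drops the exponent by one unit — this is why $\mathcal S$ must preserve the unit increment (Assumption \ref{muSas}(c)) and why a lower mode at exponent $\mu_\lambda$ with $\mu_\lambda+1=\mu_k$ feeds a correction $\chi_k=\mu_\lambda q_\lambda-q'_\lambda$ into level $\mu_k$; the $\phi'=\psi^{-1}$ factor likewise shifts contributions down by one unit, which is absorbed into this same $\chi_k$. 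For Type 3, both $\psi'$ and $\phi'$ decay faster than any power of $\psi$ by \eqref{dev}, so all derivative contributions are negligible relative to every exponent level, giving $\epsilon=0$ and $\chi_k=0$. In every case, once the algebraic identity \eqref{pq-k} is imposed, the new remainder $v(t)-q_N(\phi(t))\psi(t)^{-\mu_N}$ satisfies a linear ODE of the form handled by Lemma \ref{exp-odelem} (Type 1) or Corollary \ref{powlog-odelem} / Lemma \ref{iterlog-odelem} (Types 2 and 3), whose conclusion upgrades the decay to the next level $\mu_{N}+\varepsilon_N$ and simultaneously produces the unique polynomial $q_N$ solving the stated equation. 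Uniqueness of all the $q_k$ follows from Lemma \ref{ue}.

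**The main obstacle I anticipate** is bookkeeping the remainder terms so that the approximation lemmas apply cleanly: one must verify that after substituting the $(N-1)$-order approximation, every term not matching the target exponent $\mu_N$ is genuinely $\bigo(\psi(t)^{-\mu_N-\delta})$ for some $\delta>0$. This requires care on two fronts — controlling the nonlinear cross-terms $\mathcal G_m(\dots)$ that mix a polynomial factor $p(\phi(t))$ with a remainder, where one uses \eqref{ratiolim} to absorb any $\phi(t)$-power into a strictly smaller $\psi$-power, and handling the Type-2 derivative shift so that the inhomogeneity in the ODE for the next remainder is itself a legitimate $\classP$-function plus a fast-decaying piece, as demanded by the hypotheses of Lemma \ref{iterlog-odelem}. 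The closure of $\mathcal S$ under addition is exactly what guarantees the resonance sums in \eqref{pq-k} are finite and land on elements of $\mathcal S$, so that the inductive scheme never generates an exponent outside the prescribed sequence $(\mu_k)$.
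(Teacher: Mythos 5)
Your proposal follows the paper's proof essentially line for line: the same induction on $N$ with the finite-order approximation statement, the same auxiliary variable (the paper sets $w_N=\psi^{\mu_{N+1}}v_N$ and derives a linear ODE for it), the same resonance bookkeeping via closure of $\mathcal S$ under addition, the same explanation of $\epsilon$ and $\chi_k$ through \eqref{dev} (including the correct observation that for Type 2 both the $\psi'$- and $\phi'$-contributions shift down exactly one unit and merge into $\chi_k$), the same closing appeal to Lemma \ref{exp-odelem} or Corollary \ref{powlog-odelem}, and uniqueness via Lemma \ref{ue}.

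The one step you gloss over, and the only place a referee would push back, is the Type 1 application of Lemma \ref{exp-odelem}: that lemma is conditional when $\lambda>\lambda_1$, requiring the a priori decay \eqref{ellams}, and since $\mu_k\to\infty$ this condition is in force at every sufficiently large induction step. This is precisely where Assumption \ref{muSas}\ref{Ab} (Type 1: $\mathcal S$ contains all eigenvalues of $A$) is consumed — a hypothesis you never invoke, though you do use parts \ref{Aa} and \ref{Ac}. The paper's verification: $\lambda_*=\max\{\lambda_j:\lambda_j<\mu_{N+1}\}$ is an eigenvalue, hence lies in $\mathcal S$, hence $\lambda_*\le\mu_N$, so the inductively known decay $v_N(t)=\bigo(\psi(t)^{-\mu_N-\delta_N})$ gives $e^{(\lambda_*-\mu_{N+1})t}|w_N(t)|=e^{\lambda_* t}|v_N(t)|\to 0$; without \ref{Ab} one could have $\mu_N<\lambda_*<\mu_{N+1}$ and the lemma would be inapplicable. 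A second, more cosmetic imprecision: you write $G(y)=\sum_{m\ge 2}\mathcal G_m(y,\ldots,y)+(\text{fast remainder})$, but Assumption \ref{assumpG} provides only finite truncations and the series need not converge; the paper truncates at the finite order $M_{N+1}\ge 2\mu_{N+1}/\mu_1$ chosen so that the truncation error \eqref{Grem1}, combined with $y(t)=\bigo(\psi(t)^{-\mu_1/2})$, is $\bigo(\psi(t)^{-\mu_{N+1}-\delta})$. Both points are repairable within your scheme, and your anticipated-obstacles paragraph already identifies the right tools for everything else.
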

 
 We provide some explanations to the theorem above.
 
\begin{enumerate}[label={\rnum}]
 \item\label{Ta}  In \eqref{pq-k}, it is understood that 
 $j_1,j_2,\ldots,j_m\ge 1$.
Also, if the set of indices is empty, then the sum is understood to be zero.
 
\item\label{Tb} Consider the double summation in \eqref{pq-k}. Since $m\ge 2$ and $\mu_{j_i}>0$, we have each $\mu_{j_i}<\mu_k$, hence $j_i\le k-1$. Therefore, those terms $q_{j_i}$'s in \eqref{pq-k} come from the previous step.
 
\item\label{Tc}  For any numbers $M\ge \mu_k/\mu_1$ and $Z\ge k-1$, one has
 \beq\label{sumequiv}
\sum_{2\le m\le M}\sum_{\substack{{1\le j_{1},j_{2},\ldots,j_{m}\le Z}\\ \mu_{j_{1}}+\mu_{j_{2}}+\ldots \mu_{j_{m}}=\mu_k}}
=\sum_{m\ge 2}\sum_{\mu_{j_{1}}+\mu_{j_{2}}+\ldots \mu_{j_{m}}=\mu_k}.
\eeq

We quickly verify \eqref{sumequiv}. Since, obviously, the sum on the left-hand side is part of sum on the right-hand side, it suffices to show the reverse.
Consider the right-hand side of \eqref{sumequiv}.
Firstly, due to \ref{Tb}, we have $j_{i}\le k-1\le Z$. 
Secondly, we have
$m\mu_1\le \sum_{i=1}^m \mu_{j_i}=\mu_k,$ which yields $m\le \mu_k/\mu_1\le M$.

Therefore, thanks to \eqref{sumequiv}, the double summation in \eqref{pq-k}, in fact, is a finite sum.

\item\label{Td} For our convenience, define  
 \beq\label{chi1} 
 \chi_1=0 \text{ as a function from $\R$ to $\R^n$.}
 \eeq

 Consider formula \eqref{pq-k} even for $k=1$. Then there are no indices to satisfy the conditions in the double sum on the right-hand side of \eqref{pq-k}. Hence, by convention in \ref{Ta}, it is  $0$. Therefore, \eqref{pq-k} for $k=1$, in fact, reads as \eqref{pq1}.

\item\label{Te} With the observation in \ref{Td}, we can combine \eqref{pq1} and \eqref{pq-k} into
\beq\label{pqall}    
 Aq_k + \epsilon (q_k'-\mu_k q_k)  
=\sum_{m\ge 2}\ \sum_{\mu_{j_{1}}+\mu_{j_{2}}+\ldots \mu_{j_{m}}=\mu_k} \mathcal G_m(  q_{j_{1}}, q_{j_{2}},\ldots ,  q_{j_{m}}) +p_k + \chi_k,
\eeq
for  all $k\in\N$.

\item\label{Tf} The index $\lambda$ in \eqref{chik}, if exists, is obviously unique. Thus, $\chi_k$ is well-defined.
\end{enumerate}

 \begin{proof}[Proof of Theorem \ref{mainthm}]
 The uniqueness of polynomials $q_k$'s comes from Lemma \ref{ue}.
 It remains to establish \eqref{ymu}--\eqref{pq-k}.
 
For $N\in\N$, denote by ($\mathcal T_N$) the statement: 
\textit{The equation \eqref{pqall} holds true on $\R$, for $k=1,2,\ldots,N$, and
\beqs
\Big|y(t)-\sum_{k=1}^N q_k(\phi(t))\psi(t)^{-\mu_k} \Big|=\bigo(\psi(t)^{-\mu_N-\delta_N})
\text{ for some $\delta_N>0$. }
\eeqs
}

We will find, by induction, polynomials $q_k:\R\to\R^n$, for $k\in\N$, such that ($\mathcal T_N$) holds true for all $N\in\N$.
In the calculations below, $t$ will be sufficiently large.

\medskip
\textbf{First step.} Let  $N=1$.
Note, by the triangle inequality, \eqref{fN}, \eqref{fkdef} and \eqref{ratiolim}, that
\beqs
|f(t)| \le |f(t)- f_1(t)| + |f_1(t)| =\bigo(\psi(t)^{-\mu_1 - \varep_1}) + \bigo(\psi(t)^{-\mu_1+\delta})
\eeqs
for all $\delta>0.$ This yields
\beq\label{ffirst} 
f(t)= \bigo( \psi(t)^{-\mu_1+\delta}) \quad \forall \delta>0.
\eeq 
Applying Theorem \ref{thmdecay}, with the use of property \eqref{ffirst}, gives
\beq \label{u-first}
y(t)= \bigo( \psi(t)^{-\mu_1+\delta}) \quad \forall \delta>0.
\eeq

Let $w_0(t)=\psi(t)^{\mu_1}y(t)$. Then
 \beqs
 w_0'
 =\psi ^{\mu_1}y' +{\mu_1}\psi ^{\mu_1-1}\psi' y 
 =\psi ^{\mu_1}(-Ay + G(y)+f)+{\mu_1}\psi ^{\mu_1-1}\psi' y.
 \eeqs 
 
Thus,
 \beq \label{w00}
 w_0'
=-Aw_0 + \psi ^{\mu_1}G(y)+\psi ^{\mu_1}(f-f_1 )+\psi ^{\mu_1}f_1 +{\mu_1}\psi ^{\mu_1-1}\psi' y .
 \eeq 

Now fix $\delta>0$ such that $\delta<\min\{1,\mu_1/2\}$.
By  \eqref{Gyy} and \eqref{u-first} we have 
  \beq\label{w01}
  \psi(t)^{\mu_1}|G(y(t))|
  =\psi(t)^{\mu_1}\bigo(\psi(t)^{-2\mu_1 + 2\delta})
  =\bigo(\psi(t)^{-(\mu_1 - 2\delta)}).
 \eeq
 
By \eqref{fN},
  \beq\label{w02}
  \psi(t)^{\mu_1}|f(t)-f_1(t)|=\psi(t)^{\mu_1}\bigo(\psi(t)^{-\mu_1 - \varep_1})= \bigo(\psi(t)^{-\varep_1}).
 \eeq
 
By formula of $\psi'(t)$ in \eqref{dev} and estimate \eqref{u-first}, 
\beqs
  \mu_1\psi(t)^{\mu_1-1}\psi'(t)y(t)=
\begin{cases}
\mu_1 w_0(t) &\text{ for Type 1},\\
\psi(t)^{\mu_1-1}\bigo( \psi(t)^{-\mu_1+\delta})&\text{ for Type 2},\\
\psi(t)^{\mu_1-1}\bigo(\psi(t)^{-\lambda})\bigo( \psi(t)^{-\mu_1+\delta}), \forall \lambda>0,&\text{ for Type 3}.
\end{cases}
\eeqs
Thus,
\beq\label{w03}
  \mu_1\psi(t)^{\mu_1-1}\psi'(t)y(t)=
\begin{cases}
\mu_1 w_0(t) &\text{ for Type 1},\\
\bigo(\psi(t)^{-(1-\delta)})&\text{ for Types 2 and 3}.
\end{cases}
\eeq

Combining \eqref{w00}, \eqref{w01}, \eqref{w02}, \eqref{w03} with the fact $\psi(t)^{\mu_1}f_1(t)=p_1(\phi(t))$, we arrive at
 \beq\label{woeq}
 w_0'(t) =-(A-\epsilon\mu_1 I_n)w_0(t) +  p_1(\phi(t)) + \bigo(\psi(t)^{-\widehat\delta_1}),
 \eeq
 where $\widehat\delta_1=\min\{\mu_1-2\delta,\varep_1,1-\delta\}>0$.
 
In case of Type 1, we apply Lemma \ref{exp-odelem} to equation \eqref{woeq} taking $\lambda=\mu_1$. Thanks to \eqref{ml1}, we have $\lambda\le \lambda_1$, hence there is no need to check condition \eqref{ellams}.

In case of Types 2 and 3, we apply Corollary \ref{powlog-odelem} to equation \eqref{woeq} noticing that $\epsilon=0$.

Then there exist a polynomial $q_1:\R\to\R^n$ and a number $\delta_1 >0$ such that
\eqref{pq1}, which is \eqref{pqall} for $k=1$, holds on $\R$, and
\beqs
  |w_0(t)-q_{1}(\phi(t))|= \bigo(\psi(t)^{-\delta_1 }).
\eeqs
Multiplying the last equation by $ \psi(t)^{-\mu_1 }$ yields 
 \beqs
|y(t)-q_1(\phi(t)) \psi(t)^{-\mu_1 } |=\bigo(\psi(t)^{-\mu_1 - \delta_1 }).
\eeqs
Therefore, statement ($\mathcal T_1$) holds true.

\medskip
\textbf{ Induction step.} Let  $N\ge 1$.
Suppose there are polynomials $q_k$'s, for $1\le k\le N$, such that the statement ($\mathcal T_N$) is true.

For $k=1,2,\ldots,N$, let
$\tilde q_k(t)=q_k(\phi(t))$, and 
\beq \label{ys}
y_k(t)=\tilde q_k(t)\psi(t)^{-\mu_k},\quad 
\bar y_N(t)=\sum_{k=1}^N y_k(t),\quad 
v_N(t)=y(t)-\bar y_N(t).
\eeq 

By the induction hypothesis, we have
\beq\label{vNrate}
v_N(t)=\bigo(\psi(t)^{-\mu_N-\delta_N})\text{ for  some }\delta_N>0.
\eeq

Note from \eqref{ratiolim} that 
\beqs
|y_k(t)|=\bigo(\psi(t)^{-\mu_k+\delta}),\quad\forall \delta>0.
\eeqs
Then
\beq\label{yNbrate}
|\bar y_N(t)|\le \sum_{k=1}^N |y_k(t)|=\bigo(\psi(t)^{-\mu_1+\delta}),\quad\forall \delta>0.
\eeq

(a) Let $w_N(t)=\psi(t)^{\mu_{N+1}}v_N(t)$. We write an appropriate differential equation for $w_N(t)$. We have
\begin{align*}
w_N'&=\mu_{N+1}\psi^{\mu_{N+1}-1}\psi'  v_N + \psi^{\mu_{N+1}} (y'-\sum_{k=1}^N y_k')\\
&=\mu_{N+1}\psi^{\mu_{N+1}-1}\psi' v_N  + \psi^{\mu_{N+1}}(-Ay +G(y)+f) -  \psi^{\mu_{N+1}}\sum_{k=1}^N y_k'.
\end{align*}

Using $\psi'$ in \eqref{dev} and estimate \eqref{vNrate}, we have 
\begin{align*}
 &\mu_{N+1}\psi(t)^{\mu_{N+1}-1}\psi'(t) v_N(t)\\
 &=
 \begin{cases}
\mu_{N+1} w_N(t) &\text{ for Type 1,}\\
\mu_{N+1}\psi(t)^{\mu_{N+1}-1} \bigo(\psi(t)^{-\mu_N-\delta_N})& \text{ for Type 2,}\\
\mu_{N+1}\psi(t)^{\mu_{N+1}-1} \bigo(\psi(t)^{-\lambda}) \bigo(\psi(t)^{-\mu_N-\delta_N}),\forall\lambda>0, & \text{ for Type 3.}
\end{cases}
\end{align*}

Note  that $\mu_{N+1}\le \mu_N+1$ for Type 2, thanks to Assumption \ref{muSas}\ref{Ac}.
For Type 3, take $\lambda>\mu_N+1-\mu_{N+1}$.
Then
\beqs
 \mu_{N+1}\psi(t)^{\mu_{N+1}-1}\psi'(t) v_N(t)
=\epsilon \mu_{N+1} w_N(t) +\bigo(\psi(t)^{-\delta_N}).
\eeqs

Also,
\beqs
\psi^{\mu_{N+1}}Ay = \psi^{\mu_{N+1}} A(\bar y_N +v_N)
=  \psi^{\mu_{N+1}} A \bar y_N  + Aw_N,
\eeqs
and
\begin{align*}
\psi(t)^{\mu_{N+1}}f(t)
&=\psi(t)^{\mu_{N+1}} \Big[\bar f_{N}(t)+f_{N+1}(t)+\bigo(\psi(t)^{-\mu_{N+1}-\varep_{N+1} }\Big] \\
&= \psi(t)^{\mu_{N+1}} \bar f_{N}(t)+\tilde p_{N+1}(t) +\bigo(\psi(t)^{-\varep_{N+1}}).
\end{align*}

Thus,
\beq\label{w1}
\begin{aligned}
w_N'&= -(A-\epsilon\mu_{N+1}I_n)w_N - \psi(t)^{\mu_{N+1}}  A \bar y_N 
+  \psi(t)^{\mu_{N+1}}G(y)+\tilde p_{N+1}(t)\\
&\quad  +  \psi(t)^{\mu_{N+1}} \bar f_{N}(t)
 -  \psi(t)^{\mu_{N+1}}\sum_{k=1}^N y_k'
+\bigo(\psi(t)^{-\min\{\delta_N,\varep_{N+1}\}}).
\end{aligned}
\eeq

Below, two terms $\psi(t)^{\mu_{N+1}}G(y(t))$ and $\psi(t)^{\mu_{N+1}}\sum_{k=1}^N y_k'(t)$ in \eqref{w1} will be further calculated.

\medskip
(b) We calculate $\psi(t)^{\mu_{N+1}}G(y(t))$. Letting $\delta=\mu_1/2$ in \eqref{u-first} yields
\beq \label{yr}
y(t)= \bigo( \psi(t)^{-\mu_1/2}).
\eeq

Let $M_{N+1}$ be the smallest integer such that 
\beq \label{MN}
M_{N+1}\ge \frac{2\mu_{N+1}}{\mu_1}.
\eeq 

Note that $ M_{N+1}\ge 2$. By \eqref{Grem1}, there exists  $\theta_N>0$ such that
 \beq\label{Grem3}
 \Big|G(y)-\sum_{m=2}^{M_{N+1}} G_m(y)\Big|=\bigo(|y|^{M_{N+1}+\theta_N})\text{ as } y\to 0.
 \eeq
 
We calculate and estimate, using \eqref{Grem3} and \eqref{yr},
\begin{align*}
\psi(t)^{\mu_{N+1}}G(y(t))
&= \psi(t)^{\mu_{N+1}} \Big\{\sum_{m=2}^{M_{N+1}} G_m(y(t)) + \bigo(|y(t)|^{M_{N+1}+\theta_N}) \Big\}\\
&= \psi(t)^{\mu_{N+1}} \sum_{m=2}^{M_{N+1}} G_m(y(t)) +\psi(t)^{\mu_{N+1}}  \bigo(\psi(t)^{-(M_{N+1}+\theta_N)\mu_1/2}).
\end{align*}
Thus,
\beq\label{preG}
\psi(t)^{\mu_{N+1}}G(y(t))
=\psi(t)^{\mu_{N+1}} \sum_{m=2}^{M_{N+1}} G_m(y(t)) +  \bigo\left(\psi(t)^{-\delta_{N+1}'}\right),
\eeq
where $\delta_{N+1}'=(M_{N+1}+\theta_N)\mu_1/2-\mu_{N+1}$, which is a positive number thanks to \eqref{MN}.

For each $G_m(y(t))$ in \eqref{preG}, we rewrite and estimate it, using \eqref{GG} and \eqref{multineq}, as 
\begin{align*}
G_m(y(t))&= G_m(\bar y_N+v_N) 
 =\mathcal G_m(\bar y_N+v_N,\ldots,\bar y_N+v_N)\\
 &=\mathcal G_m(\bar y_N,\ldots,\bar y_N)+\mathcal G_m(v_N,\ldots,v_N)+\sum_{k=1}^{m-1}\bigo(|\bar y_N(t)|^k|v_N(t)|^{m-k})\\
 &=G_m(\bar y_N(t)) +\bigo(|v_N(t)|^2) +\bigo(|\bar y_N(t)| |v_N(t)|).
\end{align*}

The last two terms are estimated, by using \eqref{vNrate} and \eqref{yNbrate} with $\delta=\delta_N/2$, by
\beqs
\bigo(\psi(t)^{-2(\mu_N+\delta_N)})+\bigo(\psi(t)^{-\mu_1+\delta_N/2}\psi(t)^{-\mu_N-\delta_N }).
\eeqs

Since $\mu_{N+1}\le \mu_N+\mu_1\le 2\mu_N$, we obtain
\beq\label{Gmb}
 G_m(y(t))=G_m(\bar y_N(t))+\bigo(\psi(t)^{-\mu_{N+1}-\delta_N/2}).
\eeq

Summing up \eqref{Gmb} in $m$ and combining with \eqref{preG}, we obtain
\beq\label{psG}
\psi(t)^{\mu_{N+1}}G(y(t))
= \psi(t)^{\mu_{N+1}}\sum_{m=2}^{M_{N+1}} G_m(\bar y_N(t))+\bigo(\psi(t)^{-\delta_N/2 })+  \bigo\left(\psi(t)^{-\delta_{N+1}'}\right).
\eeq

We continue to manipulate
\beq\label{sGm}
\sum_{m=2}^{M_{N+1}} G_m(\bar y_N(t))
=  \sum_{m=2}^{M_{N+1}}\sum_{1\le j_{1},j_{2},\ldots,j_{m}\le N} \frac{ \mathcal G_m( \tilde q_{j_{1}}(t), \tilde q_{j_{2}}(t),\ldots , \tilde q_{j_{m}}(t))}{\psi(t)^{\mu_{j_{1}}+\mu_{j_{2}}+\ldots \mu_{j_{m}}}}.
\eeq 

Note from \eqref{multineq}, the fact that each $q_{j_i}$ is a polynomial, and relation \eqref{ratiolim}, that
\beq\label{Gq}
|\mathcal G_m( \tilde q_{j_{1}}(t), \tilde q_{j_{2}}(t),\ldots , \tilde q_{j_{m}}(t))|
\le \|\mathcal G_m\| \cdot \prod_{i=1}^m |q_{j_i}(\phi(t))|=\bigo(\psi(t)^\delta)
\quad\forall \delta>0.
\eeq

Thanks to Assumption \ref{muSas}\ref{Aa}, the set $\mathcal S$ preserves the addition. Hence, the sum $\mu_{j_{1}}+\mu_{j_{2}}+\ldots \mu_{j_{m}}$ belongs to $\mathcal S$, and, thus, it must be $\mu_k$ for some $k\ge 1$.
Therefore, we can split the sum in \eqref{sGm} into three parts:
\beqs 
\mu_{j_{1}}+\mu_{j_{2}}+\ldots \mu_{j_{m}}=\mu_k\text{ for }
k\le N,\ 
k=N+1\text{ and }k\ge N+2.
\eeqs 

Corresponding to the last part, i.e., $\mu_k\ge \mu_{N+2}$, taking into account \eqref{Gq}, the summand in \eqref{sGm} is 
\beqs 
\frac{ \mathcal G_m( \tilde q_{j_{1}}(t), \tilde q_{j_{2}}(t),\ldots , \tilde q_{j_{m}}(t))}{\psi(t)^{\mu_{j_{1}}+\mu_{j_{2}}+\ldots \mu_{j_{m}}}}
=\bigo(\psi(t)^{-\mu_{N+2}+\delta })
\quad\forall \delta>0.
\eeqs 
Thus, we rewrite \eqref{sGm} as
\beq\label{sGm2}
\sum_{m=2}^{M_{N+1}} G_m(\bar y_N(t))
= \sum_{k=1}^{N+1} \frac{\tilde{Q}_k(t)}{\psi(t)^{\mu_k}}  +\bigo(\psi(t)^{-\mu_{N+2}+\delta })\quad\forall \delta>0,
\eeq 
where, for $1\le k\le N+1$,
\beq\label{nonmix}
\tilde{Q}_k(t)= \sum_{m=2}^{M_{N+1}}\sum_{\substack{{1\le j_{1},j_{2},\ldots,j_{m}\le N}\\ \mu_{j_{1}}+\mu_{j_{2}}+\ldots \mu_{j_{m}}=\mu_k}} \mathcal G_m( \tilde q_{j_{1}}(t), \tilde q_{j_{2}}(t),\ldots , \tilde q_{j_{m}}(t)) = Q_k(\phi(t)),
\eeq
with $Q_k:\R\to\R^n$ being defined by
\beq\label{Ik}
Q_k(z)= \sum_{m=2}^{M_{N+1}}\sum_{\substack{{1\le j_{1},j_{2},\ldots,j_{m}\le N}\\ \mu_{j_{1}}+\mu_{j_{2}}+\ldots \mu_{j_{m}}=\mu_k}} \mathcal G_m(q_{j_{1}}(z),q_{j_{2}}(z),\ldots ,q_{j_{m}}(z))\text{ for $z\in\R$.}
\eeq

For $1\le k\le N+1$, we note that $N\ge k-1$, and, thanks to \eqref{MN}, $M_{N+1}>\mu_{N+1}/\mu_1>\mu_k/\mu_1$.
By relation \eqref{sumequiv},
\beq\label{cls}
\sum_{m=2}^{M_{N+1}}\sum_{\substack{{1\le j_{1},j_{2},\ldots,j_{m}\le N}\\ \mu_{j_{1}}+\mu_{j_{2}}+\ldots \mu_{j_{m}}=\mu_k}}
=\sum_{m\ge 2}\sum_{\mu_{j_{1}}+\mu_{j_{2}}+\ldots \mu_{j_{m}}=\mu_k}.
\eeq
Thus,  we have
\beq\label{Ik2}
Q_k= \sum_{m\ge2}\sum_{\mu_{j_{1}}+\mu_{j_{2}}+\ldots \mu_{j_{m}}=\mu_k} \mathcal G_m(q_{j_{1}},q_{j_{2}},\ldots ,q_{j_{m}}).
\eeq

Combining \eqref{psG} with \eqref{sGm2} gives, for any $\delta>0$,
\beq\label{wGy}
\begin{aligned}
\psi(t)^{\mu_{N+1}}G(y(t))
&=\tilde{Q}_{N+1}(t) +\psi(t)^{\mu_{N+1}} \sum_{k=1}^N \frac{\tilde{Q}_k(t)}{\psi(t)^{\mu_k}} \\
&\quad  +\bigo(\psi(t)^{-\mu_{N+2}+\mu_{N+1}+\delta }) +\bigo(\psi(t)^{-\delta_N/2 }) +  \bigo\left(\psi(t)^{-\delta_{N+1}'}\right).
\end{aligned}
 \eeq

Combining \eqref{w1} with \eqref{wGy} gives
\beq\label{w100}
\begin{aligned}
w_N'&= -(A-\epsilon\mu_{N+1}I_n)w_N + \tilde{Q}_{N+1} +\tilde p_{N+1}\\
&\quad + \psi(t)^{\mu_{N+1}} \sum_{k=1}^N  \psi(t)^{-\mu_k} \Big\{-A\tilde q_k + \tilde{Q}_k+\tilde p_k \Big\}\\
&\quad -  \psi(t)^{\mu_{N+1}}\sum_{k=1}^N y_k'
+\bigo(\psi(t)^{-\varep'_{N+1}}) + \bigo(\psi(t)^{-\mu_{N+2}+\mu_{N+1}+\delta}),
\end{aligned}
\eeq
for any $\delta>0$, where $\varep'_{N+1}=\min\{\delta_N/2,\varep_{N+1},\delta_{N+1}'\}>0$.

(c) We calculate $\psi(t)^{\mu_{N+1}}\sum_{k=1}^N y_k'$. Define $\tilde \chi_k(t)= \chi_k(\phi(t))$ for $1\le k\le N+1$, and let
\beqs 
\theta=
 \begin{cases}
  1, \quad \text{ for Type 2,}\\
 0, \quad \text{ for Types 1 and 3.}
 \end{cases}
\eeqs

Let $\lambda>0$ be arbitrary. Using \eqref{dev}, we have
\begin{align*}
 y_k'(t)
 &= q_k'(\phi(t))\phi'(t) \psi(t)^{-\mu_k}-\mu_k q_k(\phi(t))\psi(t)^{-\mu_k-1} \psi'(t)\\
 &= \epsilon [q_k'(\phi(t))-\mu_k q_k(\phi(t))] \psi(t)^{-\mu_k}
 + \theta [q_k'(\phi(t)) -\mu_k q_k(\phi(t))]\psi(t)^{-\mu_k-1}\\
 &\quad + (1-\epsilon)(1-\theta)\bigo(\psi(t)^{-\lambda}).
\end{align*}

Summing up in $k$ from $1$ to $N$ gives
\beq\label{syp}
\sum_{k=1}^N y_k'(t)
 =  \sum_{k=1}^N   \epsilon [q_k'(\phi(t))-\mu_k q_k(\phi(t))]\psi(t)^{-\mu_k} -\theta J(t) + \bigo(\psi(t)^{-\lambda}),
\eeq
where $J(t)=\sum_{k=1}^N [\mu_k q_k(\phi(t))-q_k'(\phi(t)) ]\psi(t)^{-\mu_k-1} $.

Consider $J(t)$ when $\theta=1$, i.e., in the case of Type 2.
Note, by Assumption \ref{muSas}\ref{Ac}, that $\mu_k+1=\mu_s$ for a unique $s\in\N$, and, in case $1\le s\le N+1$,
\beqs
[\mu_k q_k(\phi(t))-q_k'(\phi(t)) ]\psi(t)^{-\mu_k-1}= \chi_s(\phi(t))\psi(t)^{-\mu_s}.
\eeqs

Splitting the sum in $J(t)$ into $s\le N$, $s= N+1$ and $s\ge N+2$, we obtain
\beq\label{tJ}
\theta J(t) =  \sum_{k=1}^N \tilde \chi_k(t)  \psi(t)^{-\mu_k } +  \tilde \chi_{N+1}(t) \psi(t)^{-\mu_{N+1}}
  +  \theta \sum_{\substack{1 \le k \le N \\ \mu_k+1 \ge \mu_{N+2}}} \frac{\mu_k q_k(\phi(t))-q'_k(\phi(t))}{\psi(t)^{\mu_k+1}}.
\eeq 

Note, for $\mu_k+1\ge \mu_{N+2}$, we have
\beq\label{smm}
 \frac{\mu_k q_k(\phi(t))-q'_k(\phi(t))}{\psi(t)^{\mu_k+1}}=\bigo(\psi(t)^{-\mu_{N+2}+\delta}) \quad \forall \delta>0.
\eeq

For any $\delta>0$, selecting $\lambda>\mu_{N+2}$ in \eqref{syp}, and using \eqref{tJ}, \eqref{smm}, we obtain 
\begin{align*}
\sum_{k=1}^N y_k'(t)
 &=\sum_{k=1}^N\Big\{ \epsilon [q_k'(\phi(t))-\mu_k q_k(\phi(t))] 
- \tilde \chi_k(t)\Big\}\psi(t)^{-\mu_k} \\
 &\quad -   \tilde \chi_{N+1}(t) \psi(t)^{-\mu_{N+1}} + \bigo(\psi(t)^{-\mu_{N+2}+\delta}).
\end{align*}
and, hence,
\beq\label{ykprime}
\begin{split}
\psi(t)^{\mu_{N+1}}\sum_{k=1}^N y_k'(t)
&=\psi(t)^{\mu_{N+1}}\sum_{k=1}^N\Big\{ \epsilon [q_k'(\phi(t))-\mu_k q_k(\phi(t))] - \tilde \chi_k(t)\Big\}\psi(t)^{-\mu_k}\\
&\quad - \tilde \chi_{N+1}(t)  + \bigo(\psi(t)^{-\mu_{N+2}+\mu_{N+1}+\delta}).
\end{split}
\eeq

Combing \eqref{w100} with \eqref{ykprime}, and selecting $\delta=(\mu_{N+2}-\mu_{N+1})/2$ give
\beq \label{wN2}
\begin{aligned}
w_N'&=-(A-\epsilon\mu_{N+1} I_n)w_N + \hat\Phi_{N+1}(\phi(t)) +\psi(t)^{\mu_{N+1}} \sum_{k=1}^N \psi(t)^{-\mu_k}\tilde \Phi_k(t)\\
&\quad  +  \bigo(\psi(t)^{-\widehat\delta_{N+1}}),
\end{aligned}
\eeq
where $\widehat\delta_{N+1}=\min\{\varep'_{N+1},(\mu_{N+2}-\mu_{N+1})/2\}>0$, $\hat\Phi_{N+1}:\R\to \R^n$ is the polynomial defined by
\beqs
\hat\Phi_{N+1}=Q_{N+1}+p_{N+1} + \chi_{N+1},
\eeqs
and, for $1\le k\le N$,
\begin{align*}
\tilde \Phi_k(t)&=\Phi_k(\phi(t)),\text{ with }
\Phi_k=- Aq_k + Q_k + p_k -\epsilon (q_k'-\mu_k q_k) + \chi_k.
\end{align*}

By the induction hypothesis and \eqref{Ik2}, we have $\Phi_k=0$ for $k=1,\ldots,N$.
Thus,
\beq\label{wN}
w_N'=-(A-\epsilon\mu_{N+1} I_n)w_N +\hat\Phi_{N+1}(\phi(t)) + \bigo(\psi(t)^{-\widehat\delta_{N+1}}).
\eeq

(d) Using equation \eqref{wN}, we apply Lemma \ref{exp-odelem} in case of Type 1, or Corollary \ref{powlog-odelem} in case of Types 2 and 3, to polynomial 
$p=\hat\Phi_{N+1}$.

We check condition \eqref{ellams} for Type 1 with $\lambda=\mu_{N+1}$. Let $\lambda_*$ be as in \eqref{ellams}. 
Then $\lambda_*<\mu_{N+1}$. Also, since $\lambda_*$ is an eigenvalue of $A$, we have, thanks to Assumption \ref{muSas}\ref{Ab}, $\lambda_*\in\mathcal S$.  Hence $\lambda_*\le \mu_N$. Thus,
\beqs
e^{(\lambda_*-\mu_{N+1})t}|w_N(t)|=e^{\lambda_* t}|v_N(t)|\le e^{\mu_N t}|v_N(t)|\to 0\text{ as }t\to\infty.
\eeqs
Hence \eqref{ellams} is satisfied.

Then, by Lemma \ref{exp-odelem} and Corollary \ref{powlog-odelem}, there exist a polynomial $q_{N+1}:\R\to\R^n$ and a number $\delta_{N+1}>0$ such that
\beq \label{wNplus1}
|w_N(t)-q_{N+1}(\phi(t))|=\bigo(\psi(t)^{-\delta_{N+1}}),
\eeq
and $q_{N+1}(z)$ solves
\beqs
Aq_{N+1}(z) + \epsilon [q_{N+1}'(z)-\mu_{N+1} q_{N+1}(z)] =\hat\Phi_{N+1}(z),\quad z\in\R.
\eeqs

Multiplying \eqref{wNplus1} by $\psi(t)^{-\mu_{N+1} } $ gives
\beqs
|y(t)-\sum_{k=1}^{N+1} q_k(\phi(t)) \psi(t)^{-\mu_k} |=\bigo(\psi(t)^{-\mu_{N+1} - \delta_{N+1}}).
\eeqs

Thus, the statement ($\mathcal T_{N+1}$) holds true.

\medskip
\textbf{Conclusion.} By the induction principle, the statement  ($\mathcal T_N$) is true for all $N\in\N$. 
Note that the polynomial $q_{N+1}$ is constructed without changing the previous $q_k$ for $1\le k \le N$. Therefore, the polynomials $q_k$ exist for all $k\in\N$, for which  ($\mathcal T_N$) is true for all $N\in\N$.
Consequently, we obtain the asymptotic expansion \eqref{ymu} with the polynomials $q_k$'s satisfying \eqref{pq1} and \eqref{pq-k}.
\end{proof}

For convenience in comparisons, we write formulas in Theorem \ref{mainthm} for three cases of $(\psi,\phi)$ explicitly.

\medskip
\noindent\textbf{Type 1.} The expansions \eqref{fmu} and \eqref{ymu} are  
\beqs
 f(t)\sim \sum_{k=1}^\infty p_k(t)e^{-\mu_k t}=\sum_{k=1}^\infty f_k(t)\text{ and } 
 y(t)\sim \sum_{k=1}^\infty q_k(t)e^{-\mu_k t}=\sum_{k=1}^\infty y_k(t),
 \eeqs
 where, following the concise form \eqref{pqall},
\beq \label{Tqk}
q_k'+(A-\mu_k I_n)q_k=\sum_{m\ge 2} \sum_{\mu_{j_{1}}+\mu_{j_{2}}+\ldots \mu_{j_{m}}=\mu_k} \mathcal G_m(  q_{j_{1}}, q_{j_{2}},\ldots ,  q_{j_{m}}) +p_k  \quad\text{for } k \in\N,
\eeq 
or, equivalently,
 \beqs
y_k'+Ay_k=\sum_{m\ge 2} \sum_{\mu_{j_{1}}+\mu_{j_{2}}+\ldots \mu_{j_{m}}=\mu_k} \mathcal G_m(  y_{j_{1}}, y_{j_{2}},\ldots ,  y_{j_{m}}) +f_k  \quad\text{for } k \in\N.
\eeqs 
\medskip
\noindent\textbf{Type 2.} The expansions \eqref{fmu} and \eqref{ymu} are  
 \beqs
 f(t)\sim \sum_{k=1}^\infty p_k(\ln t)t^{-\mu_k}\text{ and }
 y(t)\sim \sum_{k=1}^\infty q_k(\ln t)t^{-\mu_k},
 \eeqs
where 
 \beq\label{qtype2}
q_k=A^{-1}\Big\{\sum_{m\ge 2} \sum_{\mu_{j_{1}}+\mu_{j_{2}}+\ldots \mu_{j_{m}}=\mu_k} \mathcal G_m(  q_{j_{1}}, q_{j_{2}},\ldots ,  q_{j_{m}}) +p_k + \chi_k\Big\}\text{ for }k\in\N,
 \eeq
with $\chi_k$ defined by \eqref{chi1} and \eqref{chik}.

If $\mu_k=k$ for all $k\in\N$, then
$
\chi_k=(k-1)q_{k-1}-q_{k-1}'
$
for all $k\ge 2$.

 In particular, if $p_k= \eta_k=const.$ for all $k$, then $q_k=\xi_k=const.$ for all $k$. This type of expansions is studied  in \cite{CaH1} for the Navier--Stokes equations.

 \medskip
\noindent \textbf{Type 3.} The expansions \eqref{fmu} and \eqref{ymu} are  
 \beqs
 f(t)\sim \sum_{k=1}^\infty p_k(\iln_{n_*}(t))\iln_{m_*}(t)^{-\mu_k}\text{ and }
 y(t)\sim \sum_{k=1}^\infty q_k(\iln_{n_*}(t))\iln_{m_*}(t)^{-\mu_k},
 \eeqs
where 
 \beq\label{qtype3}
q_k=A^{-1}\Big\{\sum_{m\ge 2} \sum_{\mu_{j_{1}}+\mu_{j_{2}}+\ldots \mu_{j_{m}}=\mu_k} \mathcal G_m(  q_{j_{1}}, q_{j_{2}},\ldots ,  q_{j_{m}}) +p_k\Big\} \text{ for }k\in\N.
 \eeq

\begin{remark}
We have the following comparisons.
\begin{enumerate}[label={\rnum}]
 \item  In case of Type 1, the asymptotic expansion, in general, depends on the individual solution $y(t)$, and $q_k$ is determined by solving an ODE.
In cases of Types 2 and 3, all solutions have the same expansion, and $q_k$ is determined by some algebraic operations.
 
 \item For Types 1 and 2, the time derivative in  \eqref{sys-eq} is reflected on the construction of $q_k$'s, see \eqref{Tqk} and \eqref{chik}, respectively. This is not the case for Type 3, see \eqref{qtype3}.
 
 \item In case of Type 1 and when $\mu_k$ is not an eigenvalue of $A$, thanks to Remark \ref{rmkoq}\ref{fsb}, the function $q_k(t)$ is the unique polynomial solution of \eqref{Tqk}, which depends only on the forcing function $f(t)$ and the previous $q_j(t)$, for $1\le j\le k$. Consequently, if two solutions of \eqref{sys-eq} have the same $q_k$'s for $1\le k\le N$ in their asymptotic expansions, where $N\in\N$ such that $\mu_N=\lambda_d$,
then they have the same $q_k$'s for all $k>N$, and, hence, for all $k\in\N$. This property is not available in infinite dimensional spaces.
\end{enumerate}
\end{remark}

\section{Expansions with multiple secondary base functions}\label{multisec}

This section aims to generalize the results in section \ref{basetwo}.
Subsection \ref{logsec} will generalize expansions for Type 3, while subsection \ref{doublexp} for Type 2.
With the notation in Definitions \ref{ELdef} and \ref{Pdef}, we can consider a general form of expansions in the following. 

\begin{definition}\label{mixdef}
Let $(X,\|\cdot\|_X)$ be a normed space, and $g$ be a function from $(T,\infty)$ to $X$ for some $T\in\R$. 
Let $m_*\in \Z_+$. 

\begin{enumerate}[label={\tnum}]
 \item Let $(\gamma_k)_{k=1}^\infty$ be a divergent, strictly increasing sequence of positive numbers, and $n_k\in\N$,  $p_k\in \classP(n_k,X)$ for each $k\in\N$. 
We say
\beq\label{fiter}
g(t)\sim \sum_{k=1}^\infty p_k(\LL_{m_*, n_k}(t))\iln_{m_*}(t)^{-\gamma_k},
\eeq
if, for each $N\in\N$, there is some $\mu>\gamma_N$ such that
\beq\label{iterap}
\Big\|g(t) - \sum_{k=1}^N p_k(\LL_{m_*,n_k}(t))\iln_{m_*}(t)^{-\gamma_k}\Big\|_X=\bigo(\iln_{m_*}(t)^{-\mu}).
\eeq

\item Let $N\in\N$, $(\gamma_k)_{k=1}^N$ be positive and strictly increasing, and $n_k\in\N$,  $p_k\in \classP(n_k,X)$, for $k=1,2,\ldots,N$. 
We say
\beqs
g(t)\sim \sum_{k=1}^N p_k(\LL_{m_*, n_k}(t))\iln_{m_*}(t)^{-\gamma_k},
\eeqs 
if it holds for all $\lambda>0$ that
\beqs 
\Big\|g(t) - \sum_{k=1}^N p_k(\LL_{m_*,n_k}(t))\iln_{m_*}(t)^{-\gamma_k}\Big\|_X=\bigo(\iln_{m_*}(t)^{-\lambda}).
\eeqs
\end{enumerate}
\end{definition}

In particular, when $m_*=0$, expansion \eqref{fiter} reads as
\beqs
g(t)\sim \sum_{k=1}^\infty p_k(\LL_{n_k}(t)) t^{-\gamma_k}.
\eeqs

We have the following remarks on Definition \ref{mixdef}.
\begin{enumerate}[label={\rnum}]
 \item Similar to Definition \ref{psi-phi}, we call $\iln_{m_*}(t)$ the \emph{primary base function} of expansion \eqref{fiter}, and $\iln_{m_*+j}(t)$ with $1\le j\le n_k$ and $k\in\N$, the \emph{secondary base functions}.

 \item Comparing two expansions  \eqref{expan} and \eqref{fiter} when they have the same primary base function, the latter is more general than the former, even in the case \eqref{fiter} has only one secondary base function.
 It is due to the fact that the functions $p_k$'s belong to a larger class, see remark \ref{Ca} after Definition \ref{Pdef}. 
 
 \item Because function $p_k(\LL_{m_*,n_k}(t))$ is not restricted to only non-negative integer powers of $\iln_{m_*+j}(t)$'s,  the asymptotic expansion \eqref{fiter}, in fact, is a more precise approximation of $g(t)$ compared to \eqref{expan} for Type 3 in Definition \ref{typedef}.
 \end{enumerate}

Note that
\beq\label{LLo}
\lim_{t\to\infty} \frac{\LL_{m_*, k}(t)^\alpha}{\iln_{m_*}(t)^\delta}=0 \text{ for any $k\in\N$, $\alpha\in \R^k$, and $\delta>0$.}
\eeq

Similar to Lemma \ref{ue}, we have the following uniqueness of the approximation \eqref{iterap}.

\begin{lemma}\label{ue2}
Let $(X,\|\cdot\|_X)$ be a normed space. 
Given a function $g:(T,\infty)\to X$ for some $T\in\R$. Let $N\in\N$, numbers $n_k\in\N$ and $\gamma_k\in\R$, for $1\le k\le N$, such that 
$0\le \gamma_1<\gamma_2<\ldots<\gamma_N$.
 Suppose there exist $p_k\in \classP(n_k,X)$, for $1\le k\le N$, such that  \eqref{iterap} holds for some $\mu>\gamma_N$.
Then such functions $p_1,p_2,\ldots,p_N$ are unique.
\end{lemma}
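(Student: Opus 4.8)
The plan is to follow the proof of Lemma \ref{ue} closely, with one genuinely new ingredient needed to handle the generalized (real-exponent) monomials of $\classP$. Assume two families $p_k,\hat p_k\in\classP(n_k,X)$ both satisfy \eqref{iterap}, with rates $\mu>\gamma_N$ and $\hat\mu>\gamma_N$. Setting $h_k=p_k-\hat p_k\in\classP(n_k,X)$ and arguing as in \eqref{tri}, the triangle inequality yields
\[
\Big\|\sum_{k=1}^N h_k(\LL_{m_*,n_k}(t))\iln_{m_*}(t)^{-\gamma_k}\Big\|_X=\bigo(\iln_{m_*}(t)^{-\mu_*}),\qquad \mu_*=\min\{\mu,\hat\mu\}>\gamma_N.
\]
It then suffices to prove that all $h_k=0$, which I would establish by peeling off the leading term $h_1$, reducing to an $(N-1)$-term estimate, and inducting on the number of terms.

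To isolate $h_1$, I would multiply the displayed estimate by $\iln_{m_*}(t)^{\gamma_1}$. The terms with $k\ge 2$ acquire a factor $\iln_{m_*}(t)^{\gamma_1-\gamma_k}$ with $\gamma_1-\gamma_k<0$, and since each $h_k(\LL_{m_*,n_k}(t))$ grows slower than any positive power of $\iln_{m_*}(t)$ by \eqref{LLo}, these contributions are $\bigo(\iln_{m_*}(t)^{-\rho})$ for a suitable small $\rho>0$; the right-hand side is $\bigo(\iln_{m_*}(t)^{\gamma_1-\mu_*})$ with $\gamma_1-\mu_*<0$. Combining gives $\|h_1(\LL_{m_*,n_1}(t))\|_X=\bigo(\iln_{m_*}(t)^{-\rho})$ for some $\rho>0$.

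The main obstacle is the last inference: unlike the one-variable polynomial of Lemma \ref{ue}, an element of $\classP(n_1,X)$ admits negative exponents, so mere decay of $h_1(\LL_{m_*,n_1}(t))$ does not force $h_1=0$. The key claim I must prove is: if $p=\sum_{\alpha\in I}c_\alpha z^\alpha\in\classP(n_1,X)$ with all $c_\alpha\ne 0$ satisfies $p(\LL_{m_*,n_1}(t))=\bigo(\iln_{m_*}(t)^{-\rho})$ for some $\rho>0$, then $I=\emptyset$. To prove it I would introduce the total order on the monomials $\LL_{m_*,n_1}(t)^\alpha$ coming from the strict hierarchy $\iln_{m_*+1}(t)\gg\iln_{m_*+2}(t)\gg\cdots$ (each $\iln_{m_*+j}(t)^\lambda=o(\iln_{m_*+j-1}(t))$ for all $\lambda>0$, from the asymptotics recorded after Definition \ref{ELdef}): for distinct $\alpha,\beta$ the ratio $\LL_{m_*,n_1}(t)^\alpha/\LL_{m_*,n_1}(t)^\beta$ tends to $0$ or $\infty$ according to the lexicographic comparison of $(\alpha_1,\dots,\alpha_{n_1})$ and $(\beta_1,\dots,\beta_{n_1})$, with no ties. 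Taking $\alpha^*$ lexicographically largest in $I$ and dividing by $\LL_{m_*,n_1}(t)^{\alpha^*}$ sends every other term to $0$, whence $\|p(\LL_{m_*,n_1}(t))\|_X\ge\tfrac12\|c_{\alpha^*}\|_X\,\LL_{m_*,n_1}(t)^{\alpha^*}$ for large $t$.

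To close the contradiction I would use \eqref{LLo} once more, now applied to the exponent vector $-\alpha^*$: it gives $\LL_{m_*,n_1}(t)^{-\alpha^*}=o(\iln_{m_*}(t)^{\rho})$, equivalently $\iln_{m_*}(t)^{\rho}\,\LL_{m_*,n_1}(t)^{\alpha^*}\to\infty$. Together with the lower bound above this forces $\iln_{m_*}(t)^{\rho}\|p(\LL_{m_*,n_1}(t))\|_X\to\infty$, contradicting $\|p(\LL_{m_*,n_1}(t))\|_X=\bigo(\iln_{m_*}(t)^{-\rho})$; hence $h_1=0$. Since then $\sum_{k=2}^N h_k(\LL_{m_*,n_k}(t))\iln_{m_*}(t)^{-\gamma_k}=\bigo(\iln_{m_*}(t)^{-\mu_*})$ with $\mu_*>\gamma_N>\gamma_2$, repeating the argument yields $h_2=0$, and inductively $h_k=0$ for all $k$, which is the desired uniqueness. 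I expect the careful verification of the lexicographic no-tie dominance to be the only nonroutine step; everything else is a direct transcription of the Lemma \ref{ue} scheme.
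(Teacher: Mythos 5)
Your proposal is correct and takes essentially the same route as the paper's proof: the identical triangle-inequality reduction to $\bigl\|\sum_{k=1}^N h_k(\LL_{m_*,n_k}(t))\iln_{m_*}(t)^{-\gamma_k}\bigr\|_X=\bigo(\iln_{m_*}(t)^{-\mu_*})$, isolation of $h_1$ by multiplying by $\iln_{m_*}(t)^{\gamma_1}$, and elimination of the lexicographically maximal exponent using \eqref{LLo} together with the dominance relation \eqref{LLab}, followed by the same peel-and-repeat step for $h_2,\ldots,h_N$. The only cosmetic difference is that the paper multiplies the decay estimate by $\LL_{m_*,n_1}(t)^{-\beta_*}$ and passes to the limit to conclude $c_{\beta_*}=0$ directly, whereas you recast the same computation as the lower bound $\|h_1(\LL_{m_*,n_1}(t))\|_X\ge\tfrac12\|c_{\alpha^*}\|_X\,\LL_{m_*,n_1}(t)^{\alpha^*}$ contradicting the $\bigo(\iln_{m_*}(t)^{-\rho})$ decay.
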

\begin{proof}
Suppose $\hat p_k\in \classP(n_k,X)$, for $1\le k\le N$, satisfy 
\beqs
\Big\|g(t) - \sum_{k=1}^N \hat p_k(\LL_{m_*,n_k}(t))\iln_{m_*}(t)^{-\gamma_k}\Big\|_X=\bigo(\iln_{m_*}(t)^{-\hat \mu})\text{ for some }\hat \mu>\gamma_N.
\eeqs

Let $h_k=p_k-\hat p_k$ for $1\le k\le N$. By triangle inequality, see \eqref{tri}, we have
\beqs
\Big\|\sum_{k=1}^N h_k(\LL_{m_*,n_k}(t))\iln_{m_*}(t)^{-\gamma_k}\Big\|_X=\bigo(\iln_{m_*}(t)^{-\bar \mu})\text{ for }\bar\mu=\min\{\mu,\hat\mu\}>\gamma_N.
\eeqs

Multiplying this equation by $\iln_{m_*}(t)^{\gamma_1}$ yields
\beq\label{h1L}
\|h_1(\LL_{m_*,n_k}(t))\|_X=\bigo(\iln_{m_*}(t)^{-\delta}) \text{ for } \delta=\bar\mu-\gamma_1>0.
\eeq

Suppose $h_1\ne 0$. Write $h_1(z)$ as a finite sum $\sum c_\beta z^\beta$ for $z\in (0,\infty)^{n_k}$, where $c_\beta$'s are non-zero vectors in $X$, and $\beta$'s are distinct powers in $\R^{n_k}$. 
We use the lexicography order for the powers $\beta$'s in $\R^{n_k}$.
If $\alpha,\beta$ are the powers in $\R^{n_k}$, and $\alpha>\beta$, then
 \beq\label{LLab}
 \lim_{t\to\infty}\frac{\LL_{m_*,n_k}(t)^\beta}{\LL_{m_*,n_k}(t)^\alpha}= 0.
 \eeq
Let $\beta_*$ be the maximum power among those $\beta$'s. 
Then multiplying \eqref{h1L} by $(\LL_{m_*,n_k}(t))^{-\beta_*}$ and passing $t\to\infty$, making use of \eqref{LLo} and \eqref{LLab}, we obtain $c_{\beta_*}=0$, which is a contradiction. Thus, we have $h_1=0$.
Repeating this argument gives $h_k=0$, hence, $p_k=\hat p_k$, for all $k=1,2,\ldots, N$. 
\end{proof}

\medskip
\emph{Throughout this section, $f(t)$ is a forcing function as in Assumption \ref{assumpf}, and $y(t)$ is a solution of \eqref{sys-eq} as in Assumption \ref{assumpu}.}

\subsection{Iterated logarithmic expansions}\label{logsec}

This subsection studies the expansions that contain only iterated logarithmic functions.

\begin{assumption} \label{fLas}
The function $f(t)$ admits the asymptotic expansion, in the sense of Definition \ref{mixdef} with $X=\R^n$ and $\|\cdot\|_X=|\cdot|$, 
\beq \label{fLL}
f(t)\sim \sum_{k=1}^\infty p_k(\LL_{m_*,n_k}(t))\iln_{m_*}(t)^{-\mu_k},
\eeq
where $m_*\in \N$, $(\mu_k)_{k=1}^\infty$ is a divergent, strictly increasing sequence of positive numbers, the set $\mathcal S:=\{\mu_k:k\in\N\}$  preserves the addition, $n_k$ is increasing in $k$, but not necessarily strictly increasing, and $p_k\in \classP(n_k,\R^n)$.
\end{assumption}

Below are two typical cases for Assumption \ref{fLas} to hold.

\begin{scenario}\label{scen4}
Let $m_*\in \N$. Assume, in the sense of Definition \ref{mixdef}, that
\beq\label{fLraw}
f(t)\sim \sum_{k=1}^\infty \hat p_k(\LL_{m_*, \hat n_k}(t))\iln_{m_*}(t)^{-\alpha_k},
\eeq
where $\hat n_k\in\N$, $\hat p_k\in\classP(\hat n_k,\R^n)$, and $(\alpha_k)_{k=1}^\infty$ is a divergent, strictly increasing sequence of positive numbers. 

Let the set $\mathcal S$ be defined by \eqref{S3} and be re-ordered as the sequence $(\mu_k)_{k=1}^\infty$ as in \eqref{Sseq}.
Then $(\alpha_k)_{k=1}^\infty$ becomes a subsequence of $(\mu_k)_{k=1}^\infty$, and, by re-indexing $\hat p_k$ and $\hat n_k$, we rewrite \eqref{fLraw} as
\beq\label{fLraw1}
f(t)\sim \sum_{k=1}^\infty \hat p_k(\LL_{m_*, \hat n_k}(t))\iln_{m_*}(t)^{-\mu_k}.
\eeq

Let $n_k=\max\{\hat n_j:1\le j\le k\}$. By embedding \eqref{Pembed}, we have $\classP(\hat n_k,\R^n)\subset \classP(n_k,\R^n)$ and define $p_k=\mathcal I_{\hat n_k,n_k}\hat p_k$.
Then $p_k\in \classP(n_k,\R^n)$, and, thanks to \eqref{Ikm},  $\hat p_k(\LL_{m_*, \hat n_k}(t))=p_k(\LL_{m_*,n_k}(t))$.
Thus, we can rewrite \eqref{fLraw1} as \eqref{fLL}.
\end{scenario}

\begin{scenario}\label{scen5}
Assume, similar to \eqref{fLraw}, we have the finite expansion, in the sense of Definition \ref{mixdef},
\beq\label{fLraw2}
f(t)\sim \sum_{k=1}^N \hat p_k(\LL_{m_*, \hat n_k}(t))\iln_{m_*}(t)^{-\alpha_k},
\eeq
for some $N\in \N$. Let $\mathcal S$ be defined by \eqref{S3} for $1\le \ell_j\le N$. Then similar to Scenario \ref{scen4}, we can rewrite \eqref{fLraw2} as \eqref{fLL}.
\end{scenario}

Our second main result on the asymptotic expansion for solutions of \eqref{sys-eq} is the following.

\begin{theorem}\label{logthm}
Under Assumption \ref{fLas}, the solution $y(t)$ admits the  asymptotic expansion, in the sense of Definition \ref{mixdef}, 
  \beq \label{yLL}
  y(t)\sim \sum_{k=1}^\infty q_k(\LL_{m_*,n_k}(t)) \iln_{m_*} (t)^{-\mu_k },
  \eeq 
where 
\beq\label{qinP} 
q_k\in\classP(n_k,\R^n)\text{ for all $k\in\N$,}
\eeq 
and are defined recursively by
\beq\label{log-q}
  q_k=
  \begin{cases}
A^{-1}p_1,&\text{ for $k=1$,}\\
A^{-1}\Big(\begin{displaystyle}
\sum_{m\ge 2}\ \sum_{\substack{ j_{1},j_{2},\ldots, j_{m} \ge 1 \\ \mu_{j_{1}}+\mu_{j_{2}}+\ldots + \mu_{j_{m}}=\mu_k}}
\end{displaystyle} 
\mathcal G_m(q_{j_{1}},q_{j_{2}},\ldots , q_{j_{m}}) +p_k\Big),
  &\text{ for  $k\ge 2$.}
  \end{cases}
\eeq
 \end{theorem}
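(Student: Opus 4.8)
The plan is to run the same inductive scheme as the Type 3 case of Theorem \ref{mainthm}, since here the primary base function $\iln_{m_*}(t)$ (with $m_*\ge 1$) plays exactly the role of $\psi$ for Type 3 of Definition \ref{typedef}, so $\epsilon=0$, while the secondary base functions $\iln_{m_*+1}(t),\ldots,\iln_{m_*+n_k}(t)$ replace the single $\phi=\iln_{n_*}(t)$. Two structural substitutions are needed: polynomials in $\phi$ become elements of $\classP(n_k,\R^n)$ evaluated at $\LL_{m_*,n_k}(t)$, and the linear-ODE approximation Corollary \ref{powlog-odelem} is upgraded to Lemma \ref{iterlog-odelem}. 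Uniqueness of the $q_k$'s follows at once from Lemma \ref{ue2}, so only existence and the recursion \eqref{log-q} remain. Concretely, I would prove by induction on $N$ the statement $(\mathcal T_N)$: the functions $q_1,\ldots,q_N$ defined by \eqref{log-q} satisfy \eqref{qinP}, and
\[
\Big|y(t)-\sum_{k=1}^N q_k(\LL_{m_*,n_k}(t))\iln_{m_*}(t)^{-\mu_k}\Big|=\bigo(\iln_{m_*}(t)^{-\mu_N-\delta_N})
\]
for some $\delta_N>0$.

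For the base case, from \eqref{fLL} and \eqref{LLo} I would first deduce $f(t)=\bigo(\iln_{m_*}(t)^{-\mu_1+\delta})$ for all $\delta>0$, after which the $\psi=\iln_{m_*}$ branch of Theorem \ref{thmdecay} gives $y(t)=\bigo(\iln_{m_*}(t)^{-\mu_1+\delta})$. Setting $w_0(t)=\iln_{m_*}(t)^{\mu_1}y(t)$ and differentiating, the equation becomes $w_0'=-Aw_0+p_1(\LL_{m_*,n_1}(t))+(\text{error})$, where the error collects $\iln_{m_*}^{\mu_1}G(y)$, the quantity $\iln_{m_*}^{\mu_1}$ times the expansion remainder $f(t)-p_1(\LL_{m_*,n_1}(t))\iln_{m_*}(t)^{-\mu_1}$, and $\mu_1\iln_{m_*}^{\mu_1-1}\iln_{m_*}'y$. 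Each is $\bigo(\iln_{m_*}(t)^{-\widehat\delta_1})$ for some $\widehat\delta_1>0$: the first by $G(y)=\bigo(|y|^2)$ from \eqref{Gyy}, the second by the remainder bound in \eqref{fLL}, and the third because $\iln_{m_*}'(t)=\bigo(\iln_{m_*}(t)^{-\alpha})$ for every $\alpha>0$ (the $\psi'$ estimate of Type 3 in \eqref{dev}). Lemma \ref{iterlog-odelem} with $m=m_*$, $k=n_1$, $p=p_1$ then yields $q_1=A^{-1}p_1\in\classP(n_1,\R^n)$, and multiplying by $\iln_{m_*}^{-\mu_1}$ gives $(\mathcal T_1)$.

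For the induction step I would set $w_N(t)=\iln_{m_*}(t)^{\mu_{N+1}}v_N(t)$ with $v_N=y-\sum_{k\le N}q_k(\LL_{m_*,n_k})\iln_{m_*}^{-\mu_k}$ and repeat the Type 3 computation of Theorem \ref{mainthm}, the only simplifications being that all terms coming from $\sum_{k\le N}y_k'$ are negligible (again by the super-polynomial decay of the $\iln_j'$, so that no $\chi_k$ appears) and that $\epsilon=0$. Expanding $G(y)$ through a finite order via \eqref{Grem1}, replacing $y$ by $\bar y_N$ up to admissible errors, and collecting monomials according to their power of $\iln_{m_*}$—which lands in $\mathcal S$ precisely because $\mathcal S$ preserves the addition—produces $w_N'=-Aw_N+\hat\Phi_{N+1}(\LL_{m_*,n_{N+1}}(t))+\bigo(\iln_{m_*}(t)^{-\widehat\delta_{N+1}})$, where $\hat\Phi_{N+1}=Q_{N+1}+p_{N+1}$ with $Q_{N+1}$ the double sum in \eqref{log-q}; the intermediate powers $\mu_k$ for $k\le N$ cancel exactly because the induction hypothesis gives $q_k=A^{-1}(Q_k+p_k)$. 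A final application of Lemma \ref{iterlog-odelem} with $m=m_*$, $k=n_{N+1}$ defines $q_{N+1}=A^{-1}\hat\Phi_{N+1}$ and closes the induction.

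The step needing the most care, and where this theorem genuinely differs from Theorem \ref{mainthm}, is verifying the class membership \eqref{qinP} while the number $n_k$ of secondary base functions grows with $k$. In $Q_k$ every index satisfies $j_i\le k-1$, so monotonicity of $n_k$ gives $n_{j_i}\le n_k$; using the embedding \eqref{Pembed} and identity \eqref{Ikm} I would regard each $q_{j_i}$ as an element of $\classP(n_k,\R^n)$ evaluated at $\LL_{m_*,n_k}(t)$, after which Lemma \ref{Pprop}\ref{P1} applied to the $m$-linear map $\mathcal G_m$, and Lemma \ref{Pprop}\ref{P2} applied to $A^{-1}$, show $q_k\in\classP(n_k,\R^n)$ (with partial derivatives staying in the class by Lemma \ref{Pprop}\ref{P6}). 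Keeping this bookkeeping consistent with the evaluation points $\LL_{m_*,n_k}(t)$, together with checking that every discarded remainder really decays faster than $\iln_{m_*}(t)^{-\mu_{N+1}}$, is the main obstacle; the remainder is essentially a transcription of the Type 3 argument with Corollary \ref{powlog-odelem} replaced by Lemma \ref{iterlog-odelem}.
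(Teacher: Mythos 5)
Your proposal is correct and is essentially the paper's own proof: the same induction on $(\mathcal T_N)$ with $w_N(t)=\iln_{m_*}(t)^{\mu_{N+1}}v_N(t)$, reusing the Type 3 computation of Theorem \ref{mainthm} with $\epsilon=\theta=0$, discarding $\sum_{k\le N}y_k'$ because each $y_k'(t)=\bigo(t^{-\lambda})$ for $\lambda\in(0,1)$ (hence faster than any power of $\iln_{m_*}$), invoking Lemma \ref{iterlog-odelem} in place of Corollary \ref{powlog-odelem}, and handling the growing number of secondary base functions exactly as the paper does via the embedding \eqref{Pembed} and Lemma \ref{Pprop}. The only cosmetic difference is your explicit appeal to Lemma \ref{ue2} for uniqueness, which the paper does not need since the $q_k$'s are defined by the recursion \eqref{log-q}.
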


 We quickly verify, by induction, that the definition of $q_k$ is valid and \eqref{qinP} holds true.
 
 First, because $p_1\in \classP(n_1,\R^n)$ and $q_1=A^{-1}p_1$, we have $q_1\in\classP(n_1,\R^n)$, thanks to Lemma \ref{Pprop}\ref{P2}.  
 
 Let $k\ge 2$. Suppose $q_j\in\classP(n_j,\R^n)$ for all $1\le j\le k-1$. 
 By Remark \ref{Tc} after the statement of Theorem \ref{mainthm}, the sums on the right-hand side of \eqref{log-q} is over finitely many $m$'s and $1\le j_1,j_2,\ldots,j_k\le k-1$. Thus, $n_{j_i}<n_k$, for $1\le i\le k$,  and, thanks to embedding \eqref{Pembed}, we can consider each $q_{j_i}$ belonging to $\classP(n_k,\R^n)$. Thus, by Lemma \ref{Pprop}\ref{P1}, $\mathcal G_m(q_{j_{1}},q_{j_{2}},\ldots , q_{j_{m}})$ is in $\classP(n_k,\R^n)$. Together with $p_k\in\classP(n_k,\R^n)$ and, again,   Lemma \ref{Pprop}\ref{P2}, we obtain $q_k\in\classP(n_k,\R^n)$.
 
 Then by the induction principle, $q_k\in\classP(n_k,\R^n)$ for all $k\in\N$.
 
 \begin{proof}[Proof of Theorem \ref{logthm}]
The proof is similar to that of Theorem \ref{mainthm} for Type 3. We sketch it here. 

Replace the statement ($\mathcal T_N$) in the proof of Theorem \ref{mainthm} by the following:
\beqs
\Big| y(t)- \sum_{k=1}^N q_k(\LL_{m_*,n_k}(t)) \iln_{m_*} (t)^{-\mu_k }\Big|=\bigo( \iln_{m_*} (t)^{-\mu_N-\delta_N})
\text{ for some $\delta_N>0$. }
\eeqs

Let $\psi(t) = \iln_{m_*}(t)$,  $\tilde p_k(t) = p_k(\LL_{m_*,n_k}(t))$, and define $f_k(t)$, $\bar f_N(t)$ as in \eqref{fkdef}.  

First, we notice that 
\beqs
 |\psi'(t)| = \bigo (\psi(t)^{-\lambda})\quad \forall \lambda >0.
\eeqs

One can verify \eqref{ffirst} and, hence,  \eqref{u-first}.

We prove that ($\mathcal T_N$) is true for all $N\in\N$.
We proceed by induction. Consider $t$ sufficiently large in all calculations below.

\textbf{First step: $N=1.$} Set $w_0(t)=\psi(t)^{\mu_1}y(t)$. We follow the calculations in the proof of Theorem \ref{mainthm} for Type 3 with $\epsilon=\theta=0$. We obtain the same equation \eqref{woeq} for $w_0$.
This gives
 \begin{align*}
 w_0' &=-A w_0 + p_1(\LL_{m_*,n_1}(t)) + \bigo(\psi(t)^{-\widehat\delta_1})\text{ for some }\widehat\delta_1>0.
 \end{align*}

  Applying Lemma \ref{iterlog-odelem} to this equation for $w_0$ yields the existence of a  number $\delta_1>0$ such that
\beqs
|w_0(t)-A^{-1}p_1(\LL_{m_*,n_1}(t))|= \bigo(\psi(t)^{-\delta_1 }).
\eeqs

Noting that $A^{-1}p_1=q_1 \in \classP(n_1,\R^n)$, and multiplying the preceding equation by $\psi(t)^{-\mu_1}$ gives
 \beqs
|y(t)-q_{1}(\LL_{m_*,n_1}(t)) \psi(t)^{-\mu_1 } |=\bigo(\psi(t)^{-\mu_1 - \delta_1 }).
\eeqs

Then ($\mathcal T_1$) holds true.

\textbf{ Induction step: $N\ge 1$.} Assume ($\mathcal T_N$).
Let $\tilde q_k(t)=q_k(\LL_{m_*,n_k}(t))$, define $y_k(t)$, $\bar y_N(t)$ and $v_N(t)$ as in \eqref{ys}.

Let $w_N(t)=\psi(t)^{\mu_{N+1}}v_N(t)$. 
Same calculations as in parts (a) and (b) of the proof of Theorem \ref{mainthm}, we obtain  equation \eqref{w100} again, which yields
\beq\label{ww2}
\begin{aligned}
w_N'&= -A w_N + \tilde{Q}_{N+1} +\tilde p_{N+1} + \psi(t)^{\mu_{N+1}} \sum_{k=1}^N  \psi(t)^{-\mu_k} \Big\{-A\tilde q_k + \tilde{Q}_k+\tilde p_k \Big\}\\
&\quad -  \psi(t)^{\mu_{N+1}}\sum_{k=1}^N y_k'
+\bigo(\psi(t)^{-\varep''_{N+1}}),
\end{aligned}
\eeq
for some $\varep''_{N+1}>0$. Here, same as \eqref{nonmix}, \eqref{Ik}, \eqref{cls}, and \eqref{Ik2}, for $1\le k\le N+1$,
\beq\label{QQ}
\tilde Q_k(t)=Q_k(\LL_{m_*,n_k}(t)), \text{ with $Q_k$ being defined by \eqref{Ik2}.}
\eeq

Note that $\psi'(t),\iln_k'(t)=\bigo(t^{-\lambda})$ for all $\lambda\in(0,1)$, and, by \eqref{qinP} and Lemma \ref{Pprop}\ref{P6}, 
$\partial q_k/\partial z_j \in\classP(n_k,\R^n)$ for $1\le j\le n_k$. Hence
\beqs
\ddt q_k(\LL_{m_*,n_k}(t))=\sum_{j=1}^{n_k} \frac{\partial q_k}{\partial z_j}(\LL_{m_*,n_k}(t)) \iln_{m_*+j}'(t)=\bigo(t^{-\lambda})\quad\forall \lambda\in(0,1).
\eeqs
Thus,
\beqs
 y_k'(t)
= \ddt q_k(\LL_{m_*,n_k}(t)) \psi(t)^{-\mu_k}-\mu_k q_k(\LL_{m_*,n_k}(t))\psi(t)^{-\mu_k-1} \psi'(t) \\
=\bigo(t^{-\lambda}) \quad \forall \lambda \in(0,1).
\eeqs 
Consequently, $ y_k'(t)=\bigo(\psi(t)^{-\mu_{N+1}-\varep''_{N+1}})$.
From this and \eqref{ww2}, we obtain \eqref{wN2} again:
\beqs
w_N'=-Aw_N + \hat\Phi_{N+1}(\LL_{m_*,n_{N+1}}(t)) +\psi(t)^{\mu_{N+1}} \sum_{k=1}^N \psi(t)^{-\mu_k}\Phi_k(\LL_{m_*,n_k}(t))  +  \bigo(\psi(t)^{-\widehat\delta_{N+1}}),
\eeqs
where $\widehat\delta_{N+1}=\varep''_{N+1}>0$,  and 
$\hat\Phi_{N+1}=Q_{N+1} +p_{N+1},$ with 
$\Phi_k =-Aq_k + Q_k +  p_k$ for $1\le k\le N$.

By \eqref{log-q}, $\Phi_k=0$ for $1\le k\le N$.
We therefore obtain
\beq\label{wN3}
w_N'=-Aw_N +\hat\Phi_{N+1}(\LL_{m_*,n_{N+1}}(t)) + \bigo(\psi(t)^{-\widehat\delta_{N+1}}).
\eeq

Using relation in \eqref{Pembed}, we can view $\hat\Phi_{N+1}$ as an element in the class $\classP(n_{N+1},\R^n)$.
By applying Lemma \ref{iterlog-odelem} to equation \eqref{wN3} with $y=w_N$ and $p=\hat\Phi_{N+1}$, there exists $\delta_{N+1}>0$  such that
\beq \label{ilog-wN}
|w_N(t)-A^{-1}\hat\Phi_{N+1}(\LL_{m_*,n_{N+1}}(t))|=\bigo(\psi(t)^{-\delta_{N+1}}).
\eeq

Note that $A^{-1}\hat\Phi_{N+1}=q_{N+1}$ which belongs to $\classP(n_{N+1},\R^n)$, thanks to Lemma \ref{Pprop}\ref{P2}. Multiplying equation \eqref{ilog-wN} by  $\psi(t)^{-\mu_{N+1}}$ gives
\beqs
|v_N(t)-q_{N+1}(\LL_{m_*,n_{N+1}}(t)) \psi(t)^{-\mu_{N+1} } |=\bigo(\psi(t)^{-\mu_{N+1} - \delta_{N+1} }).
\eeqs

This proves ($\mathcal T_{N+1}$). Therefore, by the induction principle, ($\mathcal T_N$) holds for all $N\in\N$, which implies the asymptotic expansion \eqref{yLL}.
\end{proof}

\begin{remark}
 If all functions $p_k$'s in \eqref{fLL} are polynomials, then, by induction, all $q_k$'s in \eqref{yLL} are also polynomials.
\end{remark}

\subsection{Mixed power and iterated logarithmic expansions}\label{doublexp}

For motivation, we consider a simple case when 
$f(t)=t^{-1}\ln \ln t$ for large $t$, with $\ln\ln t$ denoting $\ln(\ln t)$.
Then the expansion of solution $y(t)$ should contain at least a term $t^{-1}\ln \ln t$.
It yields that $y'(t)$ should contain 
$$ \frac{1}{t^2\ln t} \text{ and }\frac{\ln \ln t}{t^2}.$$ 
By equation \eqref{sys-eq}, these terms should be in the expansion of $y(t)$ as well. Therefore, we need to have some form of expansions with combinations of functions $t$, $\ln t$, and $\ln\ln t$.
In fact, even more general result holds true as showed in the following theorem.

\begin{theorem}\label{mixpl}
 Let $(\mu_k)_{k=1}^\infty$ be a divergent,  strictly increasing  sequence of positive numbers that preserves the addition and the unit increment. Let $(n_k)_{k=1}^\infty$ be an increasing sequence in $\N$. Suppose
\beq\label{fme}
f(t)\sim \sum_{k=1}^\infty p_k(\LL_{n_k}(t)) t^{-\mu_k },\quad\text{ where } p_k\in \classP(n_k,\R^n).
\eeq 

Then  the solution $y(t)$ admits the asymptotic expansion 
\beq\label{ume}
y(t)\sim \sum_{k=1}^\infty q_k(\LL_{n_k}(t)) t^{-\mu_k },
\eeq 
where $q_k\in \classP(n_k,\R^n)$ is defined by
\beq\label{mixq}
  q_k=
  \begin{cases}
A^{-1}p_1,&\text{ for $k=1$,}\\
A^{-1}\Big(\begin{displaystyle}
\sum_{m\ge 2}\ \sum_{\mu_{j_{1}}+\mu_{j_{2}}+\ldots + \mu_{j_{m}}=\mu_k}
\end{displaystyle} 
\mathcal G_m(  q_{j_{1}}, q_{j_{2}},\ldots ,  q_{j_{m}} )   + p_k + \chi_k\Big),
  &\text{ for  $k\ge 2$,}
  \end{cases}
\eeq
with $\chi_k\in\classP(n_k,\R^n)$ being
\beq\label{chime}
\chi_k(z)=\begin{cases}
\begin{displaystyle}
 \mu_{\lambda} q_\lambda(z) - \sum_{j=1}^{n_\lambda} \frac{1}{z_1 z_2\ldots z_{j-1}}\cdot \frac{\partial q_\lambda(z)}{\partial z_j},
\end{displaystyle}
&\text{ if there is $\lambda\ge 1$}\\
&\text{such that $\mu_\lambda +1 = \mu_k,$} \\
 0,&\text{otherwise},
 \end{cases}
\eeq
for $z=(z_1,z_2,\ldots,z_{n_k})\in (0,\infty)^{n_k}$.
\end{theorem}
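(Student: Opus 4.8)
The plan is to run the induction on $N$ exactly as in the proofs of Theorem \ref{mainthm} (Type 2) and Theorem \ref{logthm}, taking the primary base function to be $\psi(t)=t$ and the iterated logarithms $\LL_{n_k}(t)$ as the secondary base functions, so that $\epsilon=0$ throughout. The recursion \eqref{mixq} is well posed with $q_k\in\classP(n_k,\R^n)$: since $\mu_\lambda+1=\mu_k$ forces $\lambda<k$ and hence $n_\lambda\le n_k$, Lemma \ref{Pprop}\ref{P6} gives $\partial q_\lambda/\partial z_j\in\classP(n_\lambda,\R^n)$, multiplication by the monomial $1/(z_1\cdots z_{j-1})$ keeps us in $\classP(n_\lambda,\R^n)$ by Lemma \ref{Pprop}\ref{P4}, so $\chi_k\in\classP(n_k,\R^n)$ after the embedding \eqref{Pembed}; combined with the $\mathcal G_m$-terms treated as in Theorem \ref{logthm}, Lemma \ref{Pprop}\ref{P2} yields $q_k\in\classP(n_k,\R^n)$ by induction. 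Uniqueness of the $q_k$ is then immediate from Lemma \ref{ue2}.

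I would define $(\mathcal T_N)$ to be the assertion that \eqref{mixq} holds for $1\le k\le N$ together with the remainder bound $|y(t)-\sum_{k=1}^N q_k(\LL_{n_k}(t))t^{-\mu_k}|=\bigo(t^{-\mu_N-\delta_N})$ for some $\delta_N>0$. The base case $N=1$ and the algebraic core of the induction step carry over with only notational changes from parts (a)--(b) of the proof of Theorem \ref{mainthm}: setting $w_N=t^{\mu_{N+1}}v_N$, the term $t^{\mu_{N+1}}G(y)$ is rewritten as $\tilde Q_{N+1}+t^{\mu_{N+1}}\sum_{k\le N}t^{-\mu_k}\tilde Q_k$ plus remainders, with $Q_k$ as in \eqref{Ik2}. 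The only new point to check in part (a) is that $\mu_{N+1}t^{\mu_{N+1}-1}v_N=\mu_{N+1}t^{-1}w_N$ is a genuine remainder; this holds because the unit-increment hypothesis gives $\mu_{N+1}\le\mu_N+1$, whence this term is $\bigo(t^{-\delta_N})$.

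The essential new computation is the analogue of part (c), namely $t^{\mu_{N+1}}\sum_{k=1}^N y_k'(t)$ for $y_k(t)=q_k(\LL_{n_k}(t))t^{-\mu_k}$. Using $\iln_j'(t)=1/(t\prod_{p=1}^{j-1}\iln_p(t))$ and the chain rule, with $z=\LL_{n_k}(t)$,
\[
y_k'(t)=t^{-\mu_k-1}\Big[\sum_{j=1}^{n_k}\frac{1}{z_1\cdots z_{j-1}}\,\frac{\partial q_k}{\partial z_j}(\LL_{n_k}(t))-\mu_k q_k(\LL_{n_k}(t))\Big],
\]
so the bracket equals $-\chi_s(\LL_{n_k}(t))$, where $s$ is the unique index with $\mu_s=\mu_k+1$ (which exists by unit-increment preservation), matching \eqref{chime} with $\lambda=k$. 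Splitting $\sum_k y_k'$ according to whether $s\le N$, $s=N+1$, or $s\ge N+2$ — exactly as in the Type 2 derivation of \eqref{tJ} — pushes the $s\ge N+2$ terms into a $\bigo(t^{-\mu_{N+2}+\delta})$ remainder, isolates $-\tilde\chi_{N+1}$ at order $t^{-\mu_{N+1}}$, and feeds the remaining $\tilde\chi_k$ with $k\le N$ into the coefficients that must vanish.

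Collecting all contributions gives, as in \eqref{wN2}, an equation of the form $w_N'=-Aw_N+\hat\Phi_{N+1}(\LL_{n_{N+1}}(t))+t^{\mu_{N+1}}\sum_{k=1}^N t^{-\mu_k}\Phi_k(\LL_{n_k}(t))+\bigo(t^{-\widehat\delta_{N+1}})$, with $\hat\Phi_{N+1}=Q_{N+1}+p_{N+1}+\chi_{N+1}$ and $\Phi_k=-Aq_k+Q_k+p_k+\chi_k$; the induction hypothesis \eqref{mixq} yields $\Phi_k=0$ for $k\le N$. Viewing $\hat\Phi_{N+1}$ as an element of $\classP(n_{N+1},\R^n)$ via \eqref{Pembed}, I would apply Lemma \ref{iterlog-odelem} with $m=0$ to this equation for $w_N$, obtaining $q_{N+1}=A^{-1}\hat\Phi_{N+1}$ — precisely \eqref{mixq} — and $|w_N-q_{N+1}(\LL_{n_{N+1}}(t))|=\bigo(t^{-\delta_{N+1}})$; multiplying by $t^{-\mu_{N+1}}$ closes $(\mathcal T_{N+1})$. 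I expect the main obstacle to be the bookkeeping in part (c): one must verify that every log-derivative produced by the chain rule on the $\classP$-functions reassembles into the single closed form $\chi_s$ of \eqref{chime} and lands at exactly the power $t^{-\mu_s}$, since here $\psi'(t)=1$ makes these derivative terms contribute at leading order, in contrast to the negligible derivatives in Theorem \ref{logthm}.
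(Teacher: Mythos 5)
Your proposal is correct and follows essentially the same route as the paper's own proof: the same induction on $(\mathcal T_N)$ with $w_N=t^{\mu_{N+1}}v_N$, the same chain-rule computation of $y_k'$ via $\iln_j'(t)=1/(t\prod_{p=1}^{j-1}\iln_p(t))$ identifying the bracket with $-\chi_s$ at the shifted power $t^{-\mu_s}$, the same three-way split $s\le N$, $s=N+1$, $s\ge N+2$, and the same closing application of Lemma \ref{iterlog-odelem} (with $m=0$) yielding $q_{N+1}=A^{-1}\hat\Phi_{N+1}$. Your preliminary verification that $\chi_k,q_k\in\classP(n_k,\R^n)$ via Lemma \ref{Pprop} and the use of the unit-increment property to control $\mu_{N+1}t^{\mu_{N+1}-1}v_N$ also match the paper's argument.
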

\begin{proof}
Similar to the verification after Theorem \ref{logthm}, and thanks to Lemma \ref{Pprop}\ref{P4},\ref{P6}, we can validate that $q_k$ and $\chi_k$ belong to $\classP(n_k,\R^n)$ for all $k\in\N$. 

Set $\psi(t)=t$.
For $k\in \N$, we denote 
$\tilde p_k(t)=p_k(\LL_{n_k}(t))$ and define $f_k(t)$ and $\bar f_N(t)$ as in \eqref{fkdef}.
Again, one can verify \eqref{ffirst} and, hence,  \eqref{u-first}.

It suffices to prove, for all $N\in\N$, that
\beq\label{TNmix}
\Big| y(t)- \sum_{k=1}^N q_k(\LL_{n_k}(t)) t^{-\mu_k }\Big|=\bigo(t^{-\mu_N-\delta_N})
\text{ for some $\delta_N>0$. }
\eeq

We prove it by induction. Let ($\mathcal T_N$) denote the statement \eqref{TNmix}. Again, we consider $t$ sufficiently large for the rest of the proof.

\textbf{First step: $N=1.$} Let $w_0(t)=t^{\mu_1}y(t)$. Same as \eqref{woeq} of  Theorem \ref{mainthm} for Type 2, we have
  \beq\label{wx1}
 w_0' =-Aw_0 +  p_1(\LL_{n_1}(t))  +  \bigo(t^{-\widehat\delta_1})\text{ for some }\widehat\delta_1>0.
 \eeq

Applying Lemma \ref{iterlog-odelem} to equation \eqref{wx1}, there exists $\delta_1 >0$ such that
 \beq\label{wx2}
 \left |w_0(t) - A^{-1}p_1(\LL_{n_1}(t))\right |= \bigo(t^{-\delta_1 }).
  \eeq
  
Dividing \eqref{wx2} by $t^{\mu_1}$ gives
\beqs
\left |y(t)- t^{-\mu_1 }q_1(\LL_{n_1}(t))\right |=\bigo(t^{-\mu_1 - \delta_1}).
\eeqs

This estimate proves ($\mathcal T_1$).

\textbf{ Induction step: $N\ge 1$.}  Assume ($\mathcal T_N$).
Denote
$\tilde q_k(t)=q_k(\LL_{n_k}(t))$ and define $y_k(t)$, $\bar y_N$, $v_N$ as in \eqref{ys}.

Let $w_N(t)=t^{\mu_{N+1}}v_N(t)$.
Same as \eqref{w100} for Type 2, 
\beq\label{wx}
\begin{aligned}
w_N'&= -Aw_N + \tilde{Q}_{N+1} +\tilde p_{N+1}+ t^{\mu_{N+1}} \sum_{k=1}^N  t^{-\mu_k} \Big\{-A\tilde q_k + \tilde{Q}_k+\tilde p_k \Big\}\\
&\quad -  t^{\mu_{N+1}}\sum_{k=1}^N y_k'
+\bigo(t^{-\varep''_{N+1}}),
\end{aligned}
\eeq
for some $\varep''_{N+1}>0$.  Similar to \eqref{QQ}, we have, for $1\le k\le N+1$,  the function $\tilde Q_k(t)$ is $Q_k(\LL_{n_k}(t))$ with $Q_k$ defined by \eqref{Ik2}.

We calculate
\beq\label{ypk}
y'_k(t)=  \Big[ -\mu_k q_k(\LL_{n_k}(t)) + \sum_{j=1}^{n_k} \frac{1}{\iln_1 (t) \iln_2 (t)\ldots \iln_{j-1} (t)}\cdot \frac{\partial q_k}{\partial z_j}(\LL_{n_k}(t)) \Big] t^{-\mu_k-1}.
\eeq

Let $\tilde \chi_k(t)= \chi_k(\LL_k(t))$ for $1\le k\le N+1$. 
Note in \eqref{ypk} that $\mu_k+1=\mu_s$ for some $s\in \N$.
Summing up \eqref{ypk} in $k$ from $1$ to $N$ and split the sum into three parts corresponding to $s\le N$, $s=N+1$ and $s\ge N+2$, we obtain
\beqs
\sum_{k=1}^N y_k'(t)
 = - \sum_{k=1}^N t^{-\mu_k }\tilde \chi_k(t)
 -  t^{-\mu_{N+1}} \tilde \chi_{N+1}(t)+\bigo(t^{-\mu}),
 \eeqs
for some $ \mu > \mu_{N+1}$. Thus
\beq\label{ymul}
t^{\mu_{N+1}}\sum_{k=1}^N y_k'(t)
=-t^{\mu_{N+1}}\sum_{k=1}^N t^{-\mu_k }\tilde \chi_k(t)
 - \tilde \chi_{N+1}(t) + \bigo(t^{-\mu+\mu_{N+1}}).
\eeq

From \eqref{ymul} and \eqref{wx}, it follows
\beqs
w_N'
=-Aw_N + \hat \Phi_{N+1}(\LL_{n_{N+1}}(t))  +t^{\mu_{N+1}} \sum_{k=1}^N t^{-\mu_k}\Phi_k(\LL_{n_k}(t))
+ \bigo(t^{-\widehat\delta_{N+1}}),
\eeqs
where $\widehat\delta_{N+1}=\min\{\varep''_{N+1},\mu-\mu_{N+1}\}>0$,
\beqs
\hat \Phi_{N+1}=Q_{N+1}+  p_{N+1} + \chi_{N+1},  
\eeqs
\beqs
\Phi_k=-Aq_k+ p_k+Q_k + \chi_k,\quad 1\le k\le N.
\eeqs

By \eqref{mixq}, we have $\Phi_k=0$ for $k=1,\ldots,N$. Thus,
\beqs
w_N'=-A w_N + \hat \Phi_{N+1}(\LL_{n_{N+1}}(t)) + \bigo(t^{-\widehat\delta_{N+1}}).
\eeqs

Applying Lemma \ref{iterlog-odelem} to this equation for $w_N$,  there exists $\delta_{N+1}>0$ such that
\beq \label{wM}
\left|w_N(t)-A^{-1}\hat \Phi_{N+1}(\LL_{n_{N+1}}(t))\right|=\bigo(t^{-\delta_{N+1}}).
\eeq

Noting that $A^{-1}\hat \Phi_{N+1}=q_{N+1}$, and dividing equation \eqref{wM} by $t^{\mu_{N+1}}$  give
\beqs
\left|v_N(t)-t^{-\mu_{N+1} } q_{N+1}(\LL_{n_{N+1}}(t))\right|=\bigo(t^{-\mu_{N+1} - \delta_{N+1}}).
\eeqs

This implies ($\mathcal T_{N+1}$) and completes the induction proof of \eqref{TNmix} for all $N\in\N$.
\end{proof}

\begin{example}\label{eg55}
 Case $n_k=2$ for all $k\in\N$. Assumption \eqref{fme} becomes
\beq\label{f2}
f(t)\sim \sum_{k=1}^\infty p_k(\ln t,\ln\ln t) t^{-\mu_k }\quad \text{with } p_k\in \classP(2,\R^n).
\eeq 
Then the conclusion \eqref{ume} becomes
\beq\label{u2}
y(t)\sim \sum_{k=1}^\infty q_k(\ln t,\ln\ln t) t^{-\mu_k },
\eeq 
where each $q_k\in\classP(2,\R^n)$ is defined by \eqref{mixq}, with $\chi_k$ in \eqref{chime} becoming
\beq\label{x2}
\chi_k(z)=\begin{cases}
\begin{displaystyle}
 \mu_{\lambda} q_\lambda(z) - \frac{\partial q_\lambda(z)}{\partial z_1} - \frac{1}{z_1}\cdot \frac{\partial q_\lambda(z)}{\partial z_2},
\end{displaystyle}
&\text{ if there exists $\lambda\ge 1$ such that }\\
&\quad \mu_\lambda +1 = \mu_k, \\
 0,&\text{otherwise},
 \end{cases}
\eeq
for $z=(z_1,z_2)\in (0,\infty)^2$.
\end{example}

\begin{corollary}\label{cor54}
 Let $(\mu_k)_{k=1}^\infty$ be  as in Theorem \ref{mixpl}.
Assume 
\beq \label{fb0}
f(t) \sim \sum_{k=1}^\infty \frac{1}{t^{\mu_k}}\Big( \sum_{j=1}^{n_k}  \frac{p_{k,j}(\ln \ln t)}{\ln^{\beta_{k,j}}(t)}\Big),
\eeq
where $n_k\ge 1$, $\beta_{k,j}\ge 0$, and $p_{k,j}$'s are polynomials from $\R$ to $\R^n$ for $1\le j\le n_k$.

Then there exist natural numbers $J_k$'s, for $k\in\N$, $\R^n$-valued polynomials of one variable $q_{k,j}$'s and non-negative numbers $\gamma_{k,j}$'s, for $1\le j\le J_k$, such that 
\beq \label{uqln}
y(t) \sim \sum_{k=1}^\infty \frac{1}{t^{\mu_k}}\Big( \sum_{j=1}^{J_k} \frac{q_{k,j}(\ln \ln t)}{\ln^{\gamma_{k,j}}(t)}\Big).
\eeq
 \end{corollary}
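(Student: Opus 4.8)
The plan is to recognize the hypothesis \eqref{fb0} as the special case of \eqref{fme} with exactly two iterated-logarithmic base functions, apply Theorem \ref{mixpl} (through Example \ref{eg55}), and then analyze the algebraic shape of the resulting coefficient functions. First I would set, for $z=(z_1,z_2)\in(0,\infty)^2$, the function $P_k(z_1,z_2)=\sum_{j=1}^{n_k} p_{k,j}(z_2)\,z_1^{-\beta_{k,j}}$. Since $z_1^{-\beta_{k,j}}\in\classP(2,\R)$ and $p_{k,j}(z_2)\in\classP(2,\R^n)$, Lemma \ref{Pprop}\ref{P4} gives $P_k\in\classP(2,\R^n)$, while $P_k(\ln t,\ln\ln t)$ reproduces the inner sum in \eqref{fb0}. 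Thus \eqref{fb0} is exactly \eqref{fme} with all base-function counts equal to $2$, and Example \ref{eg55} yields $y(t)\sim\sum_k q_k(\ln t,\ln\ln t)\,t^{-\mu_k}$ with each $q_k\in\classP(2,\R^n)$ produced by the recursion \eqref{mixq} with the $\chi_k$ of \eqref{x2}.

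The heart of the argument is to prove, by induction on $k$, that every $q_k$ lies in the subclass $\mathcal Q\subset\classP(2,\R^n)$ consisting of finite sums $\sum_\alpha c_\alpha z_1^{\alpha_1}z_2^{\alpha_2}$ with $\alpha_1\le 0$ and $\alpha_2\in\Z_+$. The base case $q_1=A^{-1}P_1$ is immediate, since each $\beta_{1,j}\ge 0$ and each $p_{1,j}$ is a polynomial in $z_2$. For the inductive step I would check that $\mathcal Q$ is closed under every operation in \eqref{mixq}: it is a linear space preserved by $A^{-1}$ (which acts only on the vector coefficients $c_\alpha$); by multilinearity $\mathcal G_m(q_{j_1},\dots,q_{j_m})$ has exponents that are coordinatewise sums of those of its arguments, so its $z_1$-exponent stays $\le 0$ and its $z_2$-exponent stays a non-negative integer; and $P_k\in\mathcal Q$ by construction. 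The one term requiring care is $\chi_k$ in \eqref{x2}, which applies $\partial/\partial z_1$ and $z_1^{-1}\partial/\partial z_2$ to $q_\lambda\in\mathcal Q$.

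The key point — and the place to watch — is that $z_1^{-1}\partial/\partial z_2$ never manufactures a negative power of $z_2=\ln\ln t$: the operator $\partial/\partial z_2$ annihilates every monomial constant in $z_2$ and sends $z_2^{b}$ with $b\ge 1$ to a multiple of $z_2^{b-1}$ with $b-1\in\Z_+$, while the prefactor $z_1^{-1}$ merely lowers the already non-positive $z_1$-exponent. Similarly $\partial/\partial z_1$ kills the $z_1$-constant monomials and lowers the remaining $z_1$-exponents, keeping them $\le 0$, and leaves the $z_2$-exponents unchanged; that these derivatives remain in $\classP(2,\R^n)$ is Lemma \ref{Pprop}\ref{P6}. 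Hence $\chi_k\in\mathcal Q$, so $q_k=A^{-1}(\cdots)\in\mathcal Q$, closing the induction.

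Finally, for each fixed $k$ I would group the finitely many monomials of $q_k$ by their distinct $z_1$-exponents, written $-\gamma_{k,1},\dots,-\gamma_{k,J_k}$ with every $\gamma_{k,j}\ge 0$; collecting the associated $z_2$-monomials produces, for each $j$, a polynomial $q_{k,j}$ in $z_2$. Substituting $(z_1,z_2)=(\ln t,\ln\ln t)$ then turns $q_k(\ln t,\ln\ln t)$ into $\sum_{j=1}^{J_k} q_{k,j}(\ln\ln t)\,\ln^{-\gamma_{k,j}}(t)$, which is precisely the bracketed sum in \eqref{uqln}; the finiteness of $J_k$ is automatic because elements of $\classP(2,\R^n)$ are finite sums of monomials. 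This yields the claimed expansion.
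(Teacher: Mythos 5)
Your proposal is correct and takes essentially the same route as the paper: both reduce \eqref{fb0} to the two-variable case of Theorem \ref{mixpl} via Example \ref{eg55} and then induct on $k$ through the recursion \eqref{mixq}, showing each $q_k$ lies in the class of finite sums of $q(z_2)z_1^{-\gamma}$ with $\gamma\ge 0$ --- your $\mathcal Q$ is exactly the paper's set $F$ in \eqref{Fdef}. Your key observation that $\partial/\partial z_1$ and $z_1^{-1}\,\partial/\partial z_2$ in $\chi_k$ preserve this class (lowering the $z_1$-exponent, never producing a negative $z_2$-exponent) is precisely the paper's computation \eqref{qsdev}.
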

\begin{proof}
We follow Example \ref{eg55}. Expansion \eqref{fb0} is of the form \eqref{f2}, where  
\beqs
p_k(z_1,z_2)=\sum_{j=1}^{n_k} p_{k,j}(z_2)z_1^{-\beta_{k,j}},
\eeqs
with $p_{k,j}$ being $\R^n$-valued polynomials of one variable.
Then we have expansion \eqref{u2}. We prove that \eqref{u2}, in fact, is of the form \eqref{uqln}.
Define 
\beq \label{Fdef}
\begin{aligned}
F&=\Big\{h:(0,\infty)^2\to \R^n \text{ such that } h(z_1,z_2)\text{ is a finite sum of the products }q(z_2)z_1^{-\gamma}\\ 
&\qquad \text{ for some polynomial $q:\R\to\R^n$ and number $\gamma\ge 0$} \Big\}.
\end{aligned}
\eeq 

To establish \eqref{uqln}, it suffices to have that $q_k\in F$ for all $k\in\N$.
We will prove this by induction in $k$. First, we have $p_k\in F$ for all $k\in\N$.

Because $q_1=A^{-1}p_1$ and $p_1\in F$, we have $q_1\in F$. Let $k\ge 2$, and assume $q_s\in F$ for all $1\le s\le k-1$. 
 Then, for $1\le s\le k-1$, we can write 
 \beqs
 q_s(z_1,z_2)=\sum_{j=1}^{J_s} q_{s,j}(z_2) z_1^{-\gamma_{s,j}},
 \eeqs
where  $q_{s,j}$'s are $\R^n$-valued polynomials of one variable, and $\gamma_{s,j}\ge 0$. 
Note that
\beq\label{qsdev}\begin{aligned}
\frac{ \partial q_s(z_1,z_2)}{\partial z_1}&=\sum_{j=1}^{J_s} -\gamma_{s,j} q_{s,j}(z_2) z_1^{-\gamma_{s,j}-1}, \\
\frac1{z_1}\frac{ \partial q_s(z_1,z_2)}{\partial z_2}&=\sum_{j=1}^{J_s}  q_{s,j}'(z_2) z_1^{-\gamma_{s,j}-1}. 
 \end{aligned}
 \eeq 

 Consider $q_k$ defined by \eqref{mixq} and \eqref{x2}. 
It follows formula \eqref{x2} and calculations in \eqref{qsdev} that $\chi_k\in F$.  Then it is obvious that each term on the right-hand side of \eqref{mixq} belongs to $F$, therefore, so does $q_k$.
By the induction principle, we conclude $q_k\in F$ for all $k\in\N$ and complete the proof.
\end{proof}

Below is a particular case  when the polynomial $q_{k,j}$ in \eqref{uqln} can be determined more explicitly.

\begin{corollary}\label{cor55}
Let $(\mu_k)_{k=1}^\infty$ and  $(\beta_j)_{j=1}^\infty$ be divergent,  strictly increasing  sequences that preserve the addition and the unit increment, with $\mu_1>0$ and $\beta_1\ge 0$. Assume
\beq \label{fb1}
f(t) \sim \sum_{k=1}^\infty \frac{1}{t^{\mu_k}} \Big(\sum_{j=1}^\infty  \frac{p_{k,j}(\ln \ln t)}{\ln^{\beta_j}(t)}\Big),
\eeq
where $p_{k,j}$'s are the polynomials which, for each $k\in\N$,  differ from zero for only finitely many $j$'s.
Then
\beq \label{uMul}
y(t) \sim \sum_{k=1}^\infty \frac{1}{t^{\mu_k}}\Big( \sum_{j=1}^\infty \frac{ q_{k,j}(\ln \ln t)}{ \ln^{\beta_j}(t)}\Big),
\eeq
where $q_{k,j}$'s are the polynomials defined by
\beq \label{q1j}
q_{1,j}=A^{-1}p_{1,j}\quad \text{for } j\in\N,
\eeq
and, for $ k \ge 2$, $j\in\N$,
\beq  \label{qkj} 
\begin{aligned}
 q_{k,j}
&=A^{-1}\Big\{ \sum_{m\ge 2}\ \sum_{ \substack{ \mu_{j_{1}}+\mu_{j_{2}}+\ldots \mu_{j_{m}}=\mu_{k} \\
\beta_{l_1} + \beta_{l_2} + \ldots \beta_{l_m}=\beta_j}} \mathcal G_m(  q_{j_{1},l_1}, q_{j_{2},l_2},\ldots ,  q_{j_{m},l_m})   + p_{k,j} + \chi_{k,j}\Big\},
\end{aligned}
\eeq 
with
\beq\label{x3}
 \chi_{k,j}=
 \begin{cases}
\mu_{\lambda}q_{\lambda,j} - q'_{\lambda,\ell} + \beta_\ell q_{\lambda,\ell},&\text{ if there exist } \lambda,\ell\ge 1
\text{ such that }\\
&\quad \mu_\lambda +1 = \mu_k,\ 
\beta_\ell+1=\beta_j, \\
 0,&\text{otherwise}.
 \end{cases}
 \eeq
 \end{corollary}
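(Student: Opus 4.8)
The plan is to recognize the hypotheses as the special case $n_k=2$ of Theorem \ref{mixpl}, treated in Example \ref{eg55}, and then to refine the conclusion by tracking the powers of $\ln t$ explicitly. Writing $z_1=\ln t$ and $z_2=\ln\ln t$, the assumption that for each $k$ only finitely many $p_{k,j}$ are nonzero means $p_k(z_1,z_2)=\sum_j p_{k,j}(z_2)z_1^{-\beta_j}$ is a genuine element of $\classP(2,\R^n)$, so Theorem \ref{mixpl} applies and yields the expansion \eqref{u2} with $q_k\in\classP(2,\R^n)$ defined by the recursion \eqref{mixq}, \eqref{x2}. Moreover, by Corollary \ref{cor54}, each $q_k$ already lies in the class $F$ of \eqref{Fdef}, i.e. $q_k(z_1,z_2)$ is a finite sum of polynomials in $z_2$ against powers $z_1^{-\gamma}$ with $\gamma\ge 0$. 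The only thing left to prove is that these powers $\gamma$ may be chosen among the $\beta_j$'s and that the coefficients obey the stated recursion \eqref{q1j}--\eqref{x3}.

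I would establish this by induction on $k$, the statement being that $q_k(z_1,z_2)=\sum_{j}q_{k,j}(z_2)z_1^{-\beta_j}$ with finitely many nonzero polynomial coefficients $q_{k,j}$ given by \eqref{q1j}, \eqref{qkj}. The base case is immediate: $q_1=A^{-1}p_1=\sum_j (A^{-1}p_{1,j})(z_2)\,z_1^{-\beta_j}$, which is \eqref{q1j}. For the inductive step, I would substitute the inductive form of $q_{j_1},\ldots,q_{j_m}$ (all with $j_i\le k-1$, as observed in the remarks following Theorem \ref{mainthm}) into the right-hand side of \eqref{mixq} and collect the coefficient of $z_1^{-\beta_j}$.

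The coefficient extraction splits according to the three kinds of terms in \eqref{mixq}. For the nonlinear terms, multilinearity of $\mathcal G_m$ gives
$$\mathcal G_m(q_{j_1},\ldots,q_{j_m})=\sum_{\ell_1,\ldots,\ell_m}\mathcal G_m(q_{j_1,\ell_1},\ldots,q_{j_m,\ell_m})\,z_1^{-(\beta_{\ell_1}+\cdots+\beta_{\ell_m})},$$
and since $(\beta_j)$ preserves the addition, each exponent $\beta_{\ell_1}+\cdots+\beta_{\ell_m}$ equals some $\beta_j$; collecting over the constraint $\beta_{\ell_1}+\cdots+\beta_{\ell_m}=\beta_j$ together with $\mu_{j_1}+\cdots+\mu_{j_m}=\mu_k$ reproduces the double sum in \eqref{qkj}. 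The term $p_k$ contributes $p_{k,j}$ directly. For $\chi_k$ given by \eqref{x2}, I would insert $q_\lambda=\sum_\ell q_{\lambda,\ell}(z_2)z_1^{-\beta_\ell}$ and compute termwise: the part $\mu_\lambda q_\lambda$ keeps the exponent $\beta_\ell$, while $-\partial q_\lambda/\partial z_1-z_1^{-1}\,\partial q_\lambda/\partial z_2$ produces the exponent $\beta_\ell+1$, which again lies in $(\beta_j)$ because the sequence preserves the unit increment. Collecting the coefficient of $z_1^{-\beta_j}$ then yields \eqref{x3}, with $\lambda$ the unique index satisfying $\mu_\lambda+1=\mu_k$ and $\ell$ the index with $\beta_\ell+1=\beta_j$. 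Finiteness of the nonzero $q_{k,j}$ is inherited from the inductive hypothesis, since all three operations generate only finitely many new powers, and the polynomiality of each $q_{k,j}$ in $z_2$ follows from Corollary \ref{cor54} (alternatively, directly, since differentiation and products preserve polynomials).

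The main obstacle is the bookkeeping in this coefficient extraction: one must verify that the two distinct closure properties of $(\beta_j)$ are exactly what is needed --- closure under addition to keep the nonlinear contributions indexed by the $\beta_j$, and closure under the unit increment to keep the contributions of the time-derivative term $\chi_k$ indexed by the $\beta_j$ --- and that the indices $\lambda$ (from $\mu_\lambda+1=\mu_k$) and $\ell$ (from $\beta_\ell+1=\beta_j$) combine into the single formula \eqref{x3}. Particular care is needed to confirm that the powers of $z_1$ never drift outside the prescribed sequence; this is precisely where both hypotheses on $(\mu_k)$ and $(\beta_j)$ (preserving the addition and the unit increment) are used in tandem, and where the matching of the edge terms against \eqref{x3} must be checked.
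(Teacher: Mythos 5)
Your proposal is correct and follows essentially the same route as the paper's proof: reduce to the $n_k=2$ case of Theorem \ref{mixpl} via Example \ref{eg55} and Corollary \ref{cor54} with $\beta_{k,j}=\beta_j$, then induct on $k$ to show $q_k(z_1,z_2)=\sum_j q_{k,j}(z_2)z_1^{-\beta_j}$, using closure of $(\beta_j)$ under addition for the multilinear terms $\mathcal G_m$ and closure under the unit increment for the $\chi_k$ term, exactly as the paper does when it rewrites \eqref{mixq} and \eqref{x2} in the forms \eqref{Gs4}--\eqref{x5}. Your coefficient extraction for $\chi_k$ (the $\mu_\lambda q_\lambda$ part keeping exponent $\beta_\ell$, the derivative parts shifting to $\beta_\ell+1$) matches the paper's computation \eqref{x4} term for term.
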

\begin{proof}
First of all, one can verify, by induction in $k$, that the function $q_{k,j}$, for each $k\in\N$, differs from the zero function only for finitely many $j$'s.

We apply Corollary \ref{cor54} to $\beta_{k,j}=\beta_j$ for all $k,j$.
Under our assumptions, expansion \eqref{fb1} is the same as \eqref{f2} with
\beq\label{p4}
p_k(z_1,z_2)=\sum_{j=1}^\infty p_{k,j}(z_2)z_1^{-\beta_j}.
\eeq
Then we have expansion \eqref{u2} with $q_k$ defined by \eqref{mixq} and \eqref{x2}.
It suffices to establish, for all $k\in\N$, that
\beq\label{qsum}
q_k(z_1,z_2)=\sum_{j=1}^\infty q_{k,j}(z_2) z_1^{-\beta_j},
\eeq
where $q_{k,j}$ are defined by \eqref{q1j} and \eqref{qkj}. We prove \eqref{qsum} by induction.

When $k=1$, thanks to \eqref{mixq} and \eqref{p4},
\beqs
q_1(z_1,z_2)=A^{-1}p_1(z_1,z_2)=\sum_{j=1}^\infty A^{-1}p_{1,j}(z_2)z_1^{-\beta_j},
\eeqs
thus \eqref{qsum} holds true for $k=1$.

Let $k\ge 2$, assume formula \eqref{qsum} is true for $q_1,q_2, \ldots,q_{k-1}$.

Set $\mathcal B=\{ \beta_j:j\in\N\}$.
Let $F$ be the set of functions defined by \eqref{Fdef} with the restriction $\gamma\in \mathcal B$.
Because $\mathcal B$ preserves the addition, we have $\mathcal G_m(h_1,\ldots,h_m)\in F$ whenever $h_1,\ldots,h_m\in F$.
Therefore, the double sum in \eqref{mixq} belongs to $F$ and can be rewritten as
\beq\label{Gs4}
\sum_{j=1}^\infty \frac1{z_1^{\beta_j}} \Big\{\sum_{m\ge 2}\ \sum_{ \substack{ \mu_{j_{1}}+\mu_{j_{2}}+\ldots+ \mu_{j_{m}}=\mu_{k}\\
\beta_{l_1} + \beta_{l_2} + \ldots +\beta_{l_m}=\beta_j}} \mathcal G_m(  q_{j_{1},l_1}, q_{j_{2},l_2},\ldots ,  q_{j_{m},l_m})\Big\}.
\eeq 

We have from \eqref{x2} that
\beq\label{x4}
 \chi_k(z_1,z_2)=\mu_\lambda \sum_{j=1}^\infty \frac{q_{\lambda,j}(z_2)}{z_1^{\beta_j}}
 +\sum_{j=1}^\infty \beta_j \frac{q_{\lambda,j}(z_2)}{z_1^{\beta_j+1}}  
 -\frac1{z_1}\sum_{j=1}^\infty \frac{q_{\lambda,j}'(z_2)}{z_1^{\beta_j}},
\eeq
if there is $\lambda$ such that $\mu_\lambda+1=\mu_k$,
or $\chi_k(z_1,z_2)=0$, otherwise.

Since $\beta_j+1\in \mathcal B$ for all $j$, we can rewrite \eqref{x4} as
\beq\label{x5}
\chi_k(z_1,z_2)=\sum_{j=1}^\infty \frac{\chi_{k,j}(z_2)}{z_1^{\beta_j}},
\eeq
where $\chi_{k,j}$, for  $j\in\N$, are polynomials defined as in formula \eqref{x3}.

Combining \eqref{mixq} with \eqref{Gs4}, \eqref{p4} and \eqref{x5}, we obtain formula \eqref{qsum} for $q_k$.

By the induction principle, \eqref{qsum} holds true for all $k\in\N$. The proof is complete.
\end{proof}

\begin{example}\label{lasteg}
 Suppose $\mu_k=k$ and $\beta_j=j$ for all $k,j\in\N$.
 Then $q_{1,j}=A^{-1}p_{1,j}$ for all $j\in\N$,
and, for $ k \ge 2$, $j\in\N$,
\beqs  
 q_{k,j}=A^{-1}\Big\{ \sum_{m\ge 2}\ \sum_{ \substack{j_{1}+j_{2}+\ldots + j_{m}={k} \\
{l_1} + {l_2} + \ldots + {l_m}=j}} \mathcal G_m(  q_{j_{1},l_1}, q_{j_{2},l_2},\ldots ,  q_{j_{m},l_m})   + p_{k,j} + \chi_{k,j}\Big\},
\eeqs 
where
$ \chi_{k,j}=(k-1)q_{k-1,j} - q'_{k-1,j-1} + (j-1)q_{k-1,j-1}$.
\end{example}

\appendix

\section{Appendix}\label{append}

We discuss a particular application of our results to numerical approximations of a nonlinear PDE problem using ODE systems. 
The presentation below is focused on the ideas without showing technical details.

Consider the Navier--Stokes equations \eqref{Nf} with a given initial data $u(0)=u_0$.  For $m\in\N$, let $P_m$ denote the orthogonal projection to the first $m$ eigenspaces (corresponding to the first $m$ distinct eigenvalues) of the Stokes operator $A$.

The Galerkin approximation problem is
\beq\label{galerkin}
\frac{d u_m}{dt} + Au_m +B_m(u_m,u_m)=P_m f,\quad u_m(0)=P_m u_0, 
\eeq
where $B_m(u,u)=P_mB(u,u)$. For each $m\in\N$, the approximate system \eqref{galerkin} is an ODE system in a finite dimensional space, and $B_m(\cdot,\cdot)$ is a bilinear form. Thus, the results obtained in previous sections apply.
 
Consider Type 1, 2, 3 problems as in section \ref{basetwo}, that is,
\beq
f(t)\sim \sum_{k=1}^\infty p_k(\phi(t))\psi(t)^{-k},
\eeq
where  the base functions $\phi(t)$ and $\psi(t)$ are given in Definition \ref{typedef}.

Then the solutions $u(t)$ and $u_m(t)$ have the asymptotic expansions
\beq
u(t)\sim \sum_{k=1}^\infty q_k(\phi(t))\psi(t)^{-k}\text{ and }
u_m(t)\sim \sum_{k=1}^\infty q_k^{(m)}(\phi(t))\psi(t)^{-k}, \text{ respectively.}
\eeq

The question is whether $q_k^{(m)}$ converges to $q_k$ as $m\to\infty$ in certain sense.

First,  we roughly have 
\beq\label{Bm} 
B_m(u,u)\to B(u,u),\ P_m u_0\to u_0\text{ and }P_mf\to f \text{ as $m\to\infty$.}
\eeq 
(The normed spaces in which the convergences hold depend on the regularity of $u$, $u_0$ and $f$.)

For Types 2 and 3, the polynomials $q_k$'s are independent of the solution $u(t)$, depend only on $p_k$ and $B(\cdot,\cdot)$.
Similarly, for each $m\in\N$, the polynomials $q_k^{(m)}$'s are independent of the individual solution $u_m(t)$, depend only on $P_mp_k$ and $P_mB(\cdot,\cdot)$. With the convergences in \eqref{Bm} and explicit formulas \eqref{qtype2} and \eqref{qtype3}, it is likely that the coefficients of $q_k^{(m)}$ converge to its corresponding coefficients of $q_k(t)$, as $m\to\infty$.

For Type 1, we consider the case $u(t)$ is a unique, regular solution on $[0,\infty)$.
The construction of polynomial $q_k$, respectively $q_k^{(m)}$, depends on the long-time values of $u(t)$, respectively $u_m(t)$. 
Therefore, determining the convergence of $q_k^{(m)}$ to $q_k$, as $m\to\infty$, is more subtle than in the case of Types 2 and 3.
However, we only consider the convergence for each fixed $k$, and, in light of many related estimates in previous work such as \cite{FHOZ1,FHOZ2,FS91,FS84b}, it may still be possible to prove such a convergence.  


\def\cprime{$'$}\def\cprime{$'$} \def\cprime{$'$}

\end{document}